\numberwithin{equation}{section}
\tikzset{
	mybox/.cd,
	dir/.code args={(#1) to (#2)}{
		\def\mybox@start{#1}
		\def\mybox@target{#2}
	},
	left/.code args={#1:#2:#3}{
		\def\mybox@left@out{#1}
		\def\mybox@left@in{#2}
		\def\mybox@left@dist{#3}
	},
	right/.code args={#1:#2:#3}{
		\def\mybox@right@out{#1}
		\def\mybox@right@in{#2}
		\def\mybox@right@dist{#3}
	},
}
\newcommand\mybox[2][]{
	\def\mybox@left@out{45}
	\def\mybox@left@in{45}
	\def\mybox@right@out{45}
	\def\mybox@right@in{45}
	\def\mybox@left@dist{1cm}
	\def\mybox@right@dist{1cm}
	\pgfqkeys{/tikz/mybox}{#2}
	\draw[relative, line join=round,#1] (\mybox@start) to[out=\mybox@left@out, in=180+(-1)*(\mybox@left@in), distance=\mybox@left@dist] (\mybox@target)
	to[out=\mybox@right@in, in=180+(-1)*(\mybox@right@out), distance=\mybox@right@dist] (\mybox@start) -- cycle;
}
\title{Zooming in at the root of the stable tree}
\date{\today}
\author{Michel Nassif}
\address{CERMICS, Ecole des Ponts, France}
\email{michel.nassif@enpc.fr}
\def\l@subsection{\@tocline{2}{0pt}{2.5pc}{5pc}{}}
\theoremstyle{plain}
\newtheorem{thm}{Theorem}[section]
\newtheorem{corollary}[thm]{Corollary}
\newtheorem{proposition}[thm]{Proposition}
\newtheorem{lemma}[thm]{Lemma}
\newtheorem*{thm*}{Theorem}
\newtheorem*{proposition*}{Proposition}
\theoremstyle{definition}
\newtheorem{remark}[thm]{Remark}
\newtheorem*{remark*}{Remark}
\newcommand{\norm}[1]{\left\lVert#1\right\rVert}
\newcommand{\real}{\mathbb{R}}
\newcommand{\dd}{\mathrm{d}}
\newcommand{\ex}[1]{\operatorname{\mathbb{E}}\left[#1\right]}
\newcommand{\n}{\operatorname{\mathbb{N}}}
\newcommand{\m}{\operatorname{\mathbb{N}^\mathrm{B}}}
\mathchardef\mhyphen="2D
\renewcommand{\epsilon}{\varepsilon}
\renewcommand{\phi}{\varphi}
\newcommand{\ghp}{d_\mathrm{GHP}}
\newcommand{\M}{\mathcal{M}}
\newcommand{\law}{\overset{\scriptscriptstyle (d)}{=}} 
\newcommand{\lawd}{\overset{(d)}{=}}
\newcommand{\ind}[1]{\mathbf{1}_{\left\{#1\right\}}}
\newcommand{\tree}{\mathsf{T}}
\newcommand{\rdtree}{\mathcal{T}}
\renewcommand{\H}{\mathfrak{h}}
\renewcommand{\root}{\emptyset}
\newcommand{\T}{\mathbb{T}}
\newcommand{\e}{\mathrm{e}}
\newcommand{\excm}[1]{\operatorname{\mathbb{N}}^{(#1)}}
\newcommand{\condex}[2]{\operatorname{\mathbb{E}}\left[#1\middle| #2\right]}
\newcommand{\height}{\eta}
\newcommand{\mass}{\tau}
\newcommand{\Tdown}{\mathsf{T}^{\downarrow}}
\newcommand{\Tint}[2]{\mathsf{T}_{[#1,#2)}}
\newcommand{\treeint}[3]{\rdtree_{[#1,#2),#3}}
\renewcommand{\for}[1]{\operatorname{\mathbb{P}}_{\! #1}}
\newcommand{\Z}{\mathbf{Z}}
\newcommand{\Bex}{B^{\mathrm{ex}}}
\newcommand{\D}{D[0,\infty)}
\newcommand{\f}{\mathfrak{f}}
\newcommand{\g}{\mathfrak{g}}
\newcommand{\expp}[1]{\exp\left\lbrace#1\right\rbrace}
\newcommand{\norma}{\operatorname{\mathsf{norm}_\gamma}}
\begin{document}

\begin{abstract}
	We study the shape of the normalized stable Lévy tree $\rdtree$ near its root. We show that, when zooming in at the root at the proper speed with a scaling depending on the index of stability, we get the unnormalized Kesten tree. In particular the limit is described by a tree-valued Poisson point process which does not depend on the initial normalization. We apply this to study the asymptotic behavior of additive functionals of the form
	\[\mathbf{Z}_{\alpha,\beta}=\int_{\mathcal{T}} \mu(\mathrm{d} x) \int_0^{H(x)} \sigma_{r,x}^\alpha \mathfrak{h}_{r,x}^\beta\,\mathrm{d} r\]
	as $\max(\alpha,\beta) \to \infty$, where $\mu$ is the mass measure on $\mathcal{T}$, $H(x)$ is the height of $x$ and $\sigma_{r,x}$ (resp. $\mathfrak{h}_{r,x}$) is the mass (resp. height) of the subtree of $\mathcal{T}$ above level $r$ containing $x$. Such functionals arise as scaling limits of additive functionals of the size and height on conditioned Bienaymé-Galton-Watson trees.
\end{abstract}

\keywords{Lévy trees, additive functionals, scaling limit}

\subjclass[2010]{60J80, 60G55, 60G52}

\maketitle

\section{Introduction}
 Stable trees are special instances of Lévy trees which were introduced by Le Gall and Le Jan \cite{le1998branching} in order to generalize Aldous' Brownian tree \cite{aldous1991continuum}. More precisely, stable trees are compact weighted rooted real trees depending on a parameter $\gamma \in (1,2]$, with $\gamma=2$ corresponding to the Brownian tree, which encode the genealogical structure of continuous-state branching processes with branching mechanism $\psi(\lambda) = \lambda^\gamma$. As such, they are the possible scaling limits of Bienaymé-Galton-Watson trees with critical offspring distribution belonging to the domain of attraction of a stable distribution with index $\gamma \in (1,2]$, see Duquesne \cite{duquesne2003limit} and Kortchemski \cite{kortchemski2013simple}. They also appear as scaling limits of various models of trees and graphs, see e.g. Haas and Miermont \cite{haas2012scaling}, and are intimately related to fragmentation and coalescence processes, see Miermont \cite{miermont2003fragmentations,miermont2005fragmentations} and Berestycki, Berestycki and Schweinsberg \cite{berestycki2007beta}. Stable trees can be defined via the normalized excursion of the so-called height process which is a local time functional of a spectrally positive Lévy process. We refer to Duquesne and Le Gall \cite{duquesne2002random} for a detailed account. See also Duquesne and Winkel \cite{duquesne2007growth}, Goldschmidt and Haas \cite{haas2015line}, Marchal \cite{marchal2008note} for alternative constructions.

 In the present paper, we study the shape of the normalized stable tree $\rdtree$ (\emph{i.e.} the stable tree conditioned to have total mass $1$) near its root. More precisely we show that, after zooming in at the root of $\rdtree$ and rescaling, one gets the Kesten tree, that is an infinite branch on which subtrees are grafted according to a Poisson point process. In particular, the (rescaled) subtrees near the root of $\rdtree$ are independent and the conditionning for the total mass to be equal to $1$ disappears when zooming in. This idea to zoom in at the root of the stable tree is closely related to the small time asymptotics -- present in the works of Miermont \cite{miermont2003fragmentations} and Haas \cite{haas2007fragmentation} -- of the self-similar fragmentation process $F^{-}(t)$ obtained from the stable tree by removing vertices located under height $t$. See Remark \ref{remark: haas} in this direction. As a consequence, we obtain the asymptotic behavior of additive functionals on $\rdtree$ of the form
 \begin{equation}\label{eq: the functional on a Levy tree}
 	\Z_{\alpha,\beta}= \int_\rdtree Z_{\alpha,\beta}(x)\, \mu(\dd x) \qquad \text{with}\quad  \forall x \in \rdtree, \ Z_{\alpha,\beta}(x)=\int_0^{H(x)} \sigma_{r,x}^\alpha \H_{r,x}^\beta\, \dd r,
 \end{equation}
 where $\mu$ is the mass measure on $\rdtree$ which is a uniform measure supported by the set of leaves, $H(x)$ is the height of $x \in \rdtree$, that is its distance to the root, and $\sigma_{r,x}$ (resp. $\H_{r,x}$) is the mass (resp. height) of the subtree of $\rdtree$ above level $r$ containing $x$.

Before stating our results, we first introduce some notations. Let $\T$ be the space of weighted rooted compact real trees, that is the set of compact real trees $(T,d)$ endowed with a distinguished vertex $\root$ called the root and with a nonnegative finite measure $\mu$. We equip the set $\T$
with the Gromov-Hausdorff-Prokhorov topology, see Section \ref{section: real trees} for a precise definition. 

Define a rescaling map $R_\gamma\colon \T \times (0,\infty)\to \T$ by
\begin{equation}\label{eq: definition R}
	R_\gamma\left((T,\root,d,\mu),a\right) = \left(T,\root,ad,a^{\gamma/(\gamma-1)}\mu\right).
\end{equation}
In words, $R_\gamma((T,\root,d,\mu),a)$ is the tree obtained from $(T,\root,d,\mu)$ by multiplying all distances by $a$ and all masses by $a^{\gamma/(\gamma-1)}$. Moreover, define for every $(T,\root,d,\mu)\in \T$
\begin{equation}\label{eq: definition norma}
	\norma(T) = R_\gamma(T,\mu(T)^{-1+1/\gamma}),
\end{equation}
which is the tree $T$ normalized to have total mass $1$ and where distances are rescaled accordingly. Denote by $\excm{1}$ the distribution of the normalized stable tree with total mass $1$, see Section \ref{section: stable tree} for a precise definition. Under $\excm{1}$, let $U$ be a uniformly chosen leaf, that is $U$ is a $\rdtree$-valued random variable with distribution $\mu$. Denote by $\rdtree_i, \, i \in I_U$ the trees grafted on the branch $\llbracket \root, U\rrbracket$ joining the root $\root$ to the leaf $U$, each one at height $h_i$ and with total mass $\sigma_i = \mu(\rdtree_i)$, see Figure \ref{fig: random leaf}. Fix $\f\colon (0,\infty)\to (0,\infty)$ (this represents the speed at which we zoom in) and define for every $\epsilon >0$ a point measure on $[0,\infty)^2\times \T$ by
\begin{equation}
	\mathcal{N}_\epsilon^\f(U) = \sum_{h_i \leqslant \f(\epsilon) H(U)} \delta_{\left(\epsilon^{-1} h_i,\epsilon^{-\gamma/(\gamma-1)}\sigma_i, \norma(\rdtree_i)\right)}.
\end{equation}
Finally, for any metric space $X$, we denote by $\M_p(X)$ the space of point measures on $X$ equipped with the topology of vague convergence. 
\begin{figure}[h]
	\centering
	\begin{tikzpicture}
		\draw[thick]  (0,2) -- (0,5);
		\filldraw[fill=black] (0,2) circle [radius=2pt] node[right]{$\root$} ;
		\filldraw[fill=black] (0,5) circle [radius=2pt] node[right]{$U$} ;
		\draw [<->,blue] (1,2) --  (1,4.2) node[midway,right] {$h_i$};
		\draw[thick] plot [smooth] coordinates {(0,2.5) (0.3,2.8) (0.4,3)};
		\draw[thick] plot [smooth] coordinates {(0,3) (-0.2,3.2) (-0.4,3.5)};
		\draw[thick] plot [smooth] coordinates {(0,3.2) (0.2,3.3) (0.4,3.6)};
		\draw[thick] plot [smooth] coordinates {(0,3.5) (-0.2,3.8) (-0.4,4)};
		\draw[thick] plot [smooth] coordinates {(0,3.7) (0.3,3.9) (0.5,3.95)};
		\draw[blue,thick] plot [smooth] coordinates {(0,4.2) (-0.2, 4.3) (-0.3,4.5) (-0.5,4.9)};
		\draw[blue,thick] plot [smooth] coordinates {(-0.3,4.5) (-0.2,4.7) (-0.2, 4.85)};
		\draw[blue,thick] plot [smooth] coordinates { (-0.2, 4.3) (-0.3,4.3) (-0.4,4.25)};
		\node[blue] at (-0.7,4.6) {$\rdtree_i$};
	\end{tikzpicture}
	\caption{The subtrees $\rdtree_i$ grafted on the branch $\llbracket \root, U\rrbracket$ at height $h_i$.}
	\label{fig: random leaf}
\end{figure}

Our first main result states that the measure $\mathcal{N}_\epsilon^\f(U)$ converges to a Poisson point process which is independent of the underlying tree $\rdtree$ and of $H(U)$. 
\begin{thm}\label{thm: zooming intro} Let $\rdtree$ be the normalized stable tree with branching mechanism $\psi(\lambda) = \lambda^\gamma$ where $\gamma \in (1,2]$. Conditionally on $\rdtree$, let $U$ be a $\rdtree$-valued random variable with distribution $\mu$ under $\excm{1}$. 
Let $(\tree_s', \, s\geqslant 0)$ be a Poisson point process with intensity $\m$ given by \eqref{eq: intensity measure}, independent of $(\rdtree,H(U))$. 
	Let $\Phi \colon [0,\infty)^2\times \T\to [0,\infty)$ be a measurable function such that there exists $C>0$ such that for every $h\geqslant 0$ and $T\in \T$, we have
	\begin{equation*}
		\left|\Phi(h,b,T)-\Phi(h,a,T)\right|\leqslant C|b-a|.
	\end{equation*}
	\begin{enumerate}[label=(\roman*),leftmargin=*]
		\item If $\lim_{\epsilon \to 0}\epsilon^{-1/2}\f(\epsilon)=0$ and $\lim_{\epsilon \to 0}\epsilon^{-1}\f(\epsilon) = \infty$, then we have the following convergence in distribution
		\begin{equation}\label{eq: cv ppp 1}
			\left(\rdtree, H(U), \langle\mathcal{N}_\epsilon^\f(U),\Phi\rangle\right) \xrightarrow[\epsilon \to 0]{(d)} \left(\rdtree, H(U), \sum_{s\geqslant 0} \Phi\left(s,\mu(\tree_s'),\norma\left(\tree_s'\right)\right)\right),
		\end{equation}
		in the space $\T\times [0,\infty)\times [0,\infty]$. In particular, we have the following convergence in distribution in $\T \times [0,\infty) \times \M_p([0,\infty)\times \T)$.
		\begin{equation}\label{eq: cv ppp 2}
			\left(\rdtree, H(U),  \sum_{h_i \leqslant \f(\epsilon) H(U)} \delta_{\left(\epsilon^{-1} h_i, R_\gamma(\rdtree_i,\epsilon^{-1})\right)}\right) \xrightarrow[\epsilon \to 0]{(d)} \left(\rdtree, H(U), \sum_{s\geqslant 0} \delta_{\left(s,\tree_s'\right)}\right).
		\end{equation}
		\item If $\f(\epsilon) = \epsilon$, then we have the following convergence in distribution
		\begin{equation}
			\left(\rdtree, H(U), \langle\mathcal{N}_\epsilon^\f(U),\Phi\rangle\right) \xrightarrow[\epsilon \to 0]{(d)} \left(\rdtree, H(U), \sum_{s\leqslant H(U)} \Phi\left(s,\mu(\tree_s'),\norma\left(\tree_s'\right)\right)\right)
		\end{equation}
		in the space $\T\times [0,\infty)\times [0,\infty]$.
	\end{enumerate}
\end{thm}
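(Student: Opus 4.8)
The plan is to combine the spinal (Bismut) decomposition of the stable tree along a uniformly chosen leaf with the scaling invariance of the stable excursion measure, and to show that the mass-one conditioning built into $\excm{1}$ becomes invisible when one zooms in at the root. First I would reduce: it suffices to treat item~(i), since~(ii) is obtained by the same argument with the rescaled spine window equal to $\bigl(0,H(U)\bigr)$ throughout rather than exhausting $[0,\infty)$; and within~(i) it is enough to prove \eqref{eq: cv ppp 1}, because $\norma\circ R_\gamma(\cdot,a)=\norma$, so $R_\gamma(\rdtree_i,\epsilon^{-1})$ equals the continuous function $\Theta\bigl(\epsilon^{-\gamma/(\gamma-1)}\mu(\rdtree_i),\norma(\rdtree_i)\bigr)$ of its rescaled mass and normalized shape (with $\Theta(b,S)=R_\gamma(S,b^{(\gamma-1)/\gamma})$), and \eqref{eq: cv ppp 2} then follows from \eqref{eq: cv ppp 1} applied to test functions $\Phi(h,b,T)=\psi(b)\,g(h,\norma(T))$ with $\psi$ Lipschitz and $g$ continuous with compact support, together with a truncation of the trees away from the trivial one. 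From now on fix $\Phi$ as in the statement and study $\langle\mathcal{N}_\epsilon^\f(U),\Phi\rangle$.

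Next I would recall the spinal decomposition (Section~\ref{section: stable tree}): under $\mathbb{N}(\dd\rdtree)\,\mu(\dd x)$, writing $H(x)$ for the height of the marked point, the subtrees $(h_i,\rdtree_i)_i$ grafted on $\llbracket\root,x\rrbracket$ are, conditionally on $H(x)=h$, a Poisson point process on $(0,h)\times\T$ with intensity $\dd t\otimes\m$, and $\m[\mu\wedge1]<\infty$, so the total subtree mass $\sigma=\sum_i\mu(\rdtree_i)$ is a.s.\ finite; the law of $(\rdtree,U)$ under $\excm{1}$ is that of $(\rdtree,x)$ under $\mathbb{N}(\dd\rdtree)\,\mu(\dd x)$ conditioned on $\{\sigma=1\}$. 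Write $M^{\mathrm{kept}}_\epsilon=\sum_{h_i\leqslant\f(\epsilon)H(U)}\mu(\rdtree_i)$ for the mass of the near-root subtrees and $M^{\mathrm{rest}}_\epsilon=\sigma-M^{\mathrm{kept}}_\epsilon$. Using the scaling identity that $R_\gamma(\cdot,a)$ pushes $\mathbb{N}$ forward to $a^{1/(\gamma-1)}\mathbb{N}$ and hence $\dd t\otimes\m$ on $(0,h)\times\T$ forward to $\dd s\otimes\m$ on $(0,ah)\times\T$ under $(t,T)\mapsto(at,R_\gamma(T,a))$, the decomposition above translates into the following exact description: conditionally on $H(U)=h$ and on the macroscopic configuration (the spine together with the subtrees grafted above height $\f(\epsilon)h$, which determines $M^{\mathrm{rest}}_\epsilon$), the point measure
\[
\mathcal{Q}_\epsilon:=\sum_{h_i\leqslant\f(\epsilon)H(U)}\delta_{\bigl(\epsilon^{-1}h_i,\ R_\gamma(\rdtree_i,\epsilon^{-1})\bigr)}
\]
is a Poisson point process $\sum_s\delta_{(s,\tree_s')}$ of intensity $\dd s\otimes\m$ on $\bigl(0,\epsilon^{-1}\f(\epsilon)h\bigr)\times\T$, conditioned only through its total mass being forced (via the relation $M^{\mathrm{kept}}_\epsilon+M^{\mathrm{rest}}_\epsilon=1$) to a value determined by $M^{\mathrm{rest}}_\epsilon$. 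Under~(i) the window $\epsilon^{-1}\f(\epsilon)h$ tends to $\infty$, so in the absence of the conditioning the right-hand side already converges to the unnormalized Kesten tree $\sum_{s\geqslant0}\delta_{(s,\tree_s')}$; under~(ii) the window is identically $(0,h)$.

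It then remains to show that the conditioning is asymptotically harmless, jointly with the convergence of the macroscopic data $(\rdtree,H(U))$; this is the heart of the proof. By the scaling of $\m$, conditionally on $H(U)$ the quantity $M^{\mathrm{kept}}_\epsilon$ is comparable to a $\tfrac{\gamma-1}{\gamma}$-stable variable of scale $\bigl(\f(\epsilon)H(U)\bigr)^{\gamma/(\gamma-1)}$, so $M^{\mathrm{kept}}_\epsilon\to0$ in probability and $\excm{1}\bigl(M^{\mathrm{kept}}_\epsilon\geqslant\eta\bigr)=O(\f(\epsilon))$ for fixed $\eta>0$; moreover, since $\m\bigl(\{\mathrm{height}\geqslant\delta\}\bigr)<\infty$, the near-root subtrees of height exceeding any fixed $\delta>0$ are eventually absent, so deleting the near-root subtrees moves $\rdtree$ by a Gromov-Hausdorff-Prokhorov amount tending to $0$ and $(\rdtree,H(U))$ is asymptotically a function of the macroscopic configuration alone. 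On the other hand, conditioning the near-root Poisson sum (of total mass $M^{\mathrm{kept}}_\epsilon$) together with the independent mass $M^{\mathrm{rest}}_\epsilon$ on $M^{\mathrm{kept}}_\epsilon+M^{\mathrm{rest}}_\epsilon=1$ amounts, after disintegrating over $M^{\mathrm{kept}}_\epsilon$, to tilting the law of the near-root subtrees by the density ratio $p^{\mathrm{rest}}_\epsilon\bigl(1-M^{\mathrm{kept}}_\epsilon\bigr)/p^\sigma(1)$, where $p^{\mathrm{rest}}_\epsilon\to p^\sigma$ is the density of $M^{\mathrm{rest}}_\epsilon$ and $p^\sigma$ that of $\sigma$; since $p^\sigma$ is continuous and positive near $1$ and $M^{\mathrm{kept}}_\epsilon\to0$, this tilt tends to $1$ (splitting on $\{M^{\mathrm{kept}}_\epsilon<\eta\}$ and its complement to cope with the heavy, infinite-mean tail of $M^{\mathrm{kept}}_\epsilon$). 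Hence the near-root process decouples, in the limit, from $M^{\mathrm{rest}}_\epsilon$ and from the macroscopic data, and, conditionally on the latter, the Laplace transform of $\langle\mathcal{N}_\epsilon^\f(U),\Phi\rangle$ converges to
\[
\expp{-\int_0^\infty\!\dd s\int_\T\bigl(1-\e^{-\lambda\,\Phi(s,\mu(T),\norma(T))}\bigr)\,\m(\dd T)}
\]
under~(i), and to the same expression with $\int_0^\infty$ replaced by $\int_0^{H(U)}$ under~(ii); the integral converges by monotone convergence (nonnegative integrand, growing domain), and the Lipschitz hypothesis on $\Phi$ in its mass coordinate is what controls the effect on $\Phi$ of the vanishing perturbation of the subtree masses produced by the conditioning. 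This yields the joint convergence in \eqref{eq: cv ppp 1}, with the limiting functional written through the independent Poisson point process $(\tree_s',s\geqslant0)$.

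I expect the main obstacle to be making this decoupling quantitative and uniform over the macroscopic test functions: since $M^{\mathrm{kept}}_\epsilon$ has infinite mean, one cannot bound the tilting error by $\mathbb{E}\bigl[M^{\mathrm{kept}}_\epsilon\bigr]$ and must instead combine the tail estimate $\excm{1}\bigl(M^{\mathrm{kept}}_\epsilon\geqslant\eta\bigr)=O(\f(\epsilon))$ with the local regularity of $p^\sigma$ near $1$, while also controlling the rare event that $M^{\mathrm{rest}}_\epsilon$ is small (where $p^{\mathrm{rest}}_\epsilon$ is large). It is precisely here that the two hypotheses on $\f$ enter: $\epsilon^{-1}\f(\epsilon)\to\infty$ so that the rescaled spine window exhausts $[0,\infty)$, and $\epsilon^{-1/2}\f(\epsilon)\to0$ so that $M^{\mathrm{kept}}_\epsilon$, as well as a second-order term in the expansion of the conditioning density, become negligible.
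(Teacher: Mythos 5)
Your proposal is correct in spirit and captures the two key ideas of the proof — the spinal (Bismut) decomposition along the uniform leaf, and the asymptotic negligibility of the normalization when one zooms in — but it takes a genuinely different technical route from the paper. Where you work directly with the conditional law $\excm{1}=\n[\,\cdot\mid\sigma=1]$ and propose to neutralize the conditioning by a density-tilting argument (writing the tilting factor as $p^{\mathrm{rest}}_\epsilon(1-M^{\mathrm{kept}}_\epsilon)/p^\sigma(1)$, controlling it via the local regularity of the stable density near $1$ and a tail estimate for $M^{\mathrm{kept}}_\epsilon$), the paper never conditions on $\sigma=1$ at all. It instead uses the scaling identity of Lemma~\ref{lemma: sigma = 1 ou > 1}, $\excm{1}[F(\rdtree)]=\Gamma(1-1/\gamma)\,\n[\ind{\sigma>1}F(\norma(\rdtree))]$, to rewrite everything as a \emph{weighted} excursion integral. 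After Bismut this produces the explicit weight $\mass_t^{-1}\ind{\mass_t>1}$ together with arguments of $f,g,\Phi$ depending on $\mass_t$; the whole normalization issue is then reduced to replacing $\mass_t$ by $\mass_{\g(\epsilon)t}$ (the far-part mass) with vanishing error, which is the content of Lemma~\ref{lemma: approx point measure} and is proved by elementary GHP-distance and subordinator-moment bounds — no regularity of stable densities enters. Your route is perfectly plausible (positive stable densities are smooth, positive, and vary continuously in the scale parameter), but it requires that extra input plus careful uniformity in the two conditioning variables $H(U)$ and $\sigma$; the paper's route trades this for the clean algebraic identity and a more hands-on but elementary approximation lemma. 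Both analyses isolate the same role for the hypotheses: $\epsilon^{-1}\f(\epsilon)\to\infty$ exhausts the rescaled spine window, while $\epsilon^{-1/2}\f(\epsilon)\to0$ (equivalently $\epsilon^{-1}\f(\epsilon)^2\to0$) kills the quadratic correction term — in the paper this appears as the scaling $\epsilon^{-\gamma/(\gamma-1)}(\mass_t-\mass_{\g(\epsilon)t})^2\lawd(\epsilon^{-1}\f(\epsilon)^2)^{\gamma/(\gamma-1)}\mass_t^2$ controlling the Lipschitz error of $\Phi$ in its mass coordinate. One small inaccuracy in your sketch: for a positive $(1-1/\gamma)$-stable law the density decays super-exponentially near $0^+$, so the regime where $M^{\mathrm{rest}}_\epsilon$ is small makes the tilting factor $p^{\mathrm{rest}}_\epsilon(1-M^{\mathrm{kept}}_\epsilon)$ \emph{small}, not large; the tilting ratio is in fact uniformly bounded, which simplifies that part of your argument.
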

In other words, zooming in at the speed $\f(\epsilon) = \epsilon$ gives a \emph{finite} branch on which subtrees are grafted in a Poissonian manner, whereas zooming in at a slower speed gives an \emph{infinite} branch at the limit. Notice that the convergence \eqref{eq: cv ppp 1} is stronger than convergence in distribution for the vague topology \eqref{eq: cv ppp 2} as it holds for functions $\Phi$ with very few regularity assumptions: $\Phi(h,a,T)$ is only Lipschitz-continuous with respect to $a$ instead of (Lipschitz-)continuous with respect to $(h,a,T)$ with bounded support. In particular, this could allow to consider local time functionals of the tree.

As an application of this result, we study the asymptotic behavior as $\max(\alpha,\beta)\to \infty$ of additive functionals $\Z_{\alpha,\beta}$ on the stable tree $\rdtree$. Such functionals arise as scaling limits of additive functionals of the size and height on conditioned Bienaymé-Galton-Watson trees, see Delmas, Dhersin and Sciauveau \cite{delmas2018} or Abraham, Delmas and Nassif \cite{abraham2020global} where it is shown that $\Z_{\alpha,\beta}<\infty$ a.s. if (and only if) $\gamma \alpha + (\gamma-1)(\beta+1)>0$, see Corollary 6.10 therein. In the present paper, we only consider $\alpha,\beta\geqslant 0$ which guarantees in particular the finiteness of $\Z_{\alpha,\beta}$. For example, let us mention the total path length and the Wiener index which properly scaled converge respectively to $\Z_{0,0}$ and $\Z_{1,0}$. Fill and Janson \cite{fillsum} considered the case $\gamma=2$ and $\beta =0$ (\emph{i.e.} functionals of the mass on the Brownian tree) and proved that there is convergence in distribution as $\alpha \to \infty$ of $\Z_{\alpha,0}$ properly normalized to
\begin{equation*}\label{eq: FJ limit}
	\int_0^\infty \e^{-S_t}\, \dd t,
\end{equation*}
where $(S_t, \, t \geqslant 0)$ is a $1/2$-stable subordinator. Their proof relies on the connection between the normalized Brownian excursion which codes the Brownian tree and the three-dimensional Bessel bridge. Our aim is twofold: we extend their result to the non-Brownian stable case $\gamma \in (1,2)$ while also considering polynomial functionals depending on both the mass and the height. We use a different approach relying on the Bismut decomposition of the stable tree.

Going back to the connection with the fragmentation process $F^{-}(t) = \allowdisplaybreaks (F^{-}_1(t), F^{-}_2(t),\ldots)$, it is not hard to see thatthe additive functional $\Z_{\alpha,0}$ can be expressed in terms of $F^{-}$ as
\begin{equation*}
	\Z_{\alpha,0} = \sum_{i\geqslant 1} \int_0^\infty F_i^{-}(t)^{\alpha+1}\, \dd t.
\end{equation*}
Once this is established, one can argue that only the largest fragment $F_1^{-}$ contributes to the limit, the others being negligible, then use \cite[Corollary 17]{haas2007fragmentation} which implies that $1-F_1^{-}$ properly normalized converges in distribution to a $(1-1/\gamma)$-stable subordinator $S$, to get the convergence of $\Z_{\alpha,0}$ to $\int_0^\infty \e^{-S_t}\, \dd t$. In the present paper, we do not adopt this approach as it does not allow to consider functionals of the height (that is $\beta \neq 0$).

We distinguish two regimes according to the behavior of $\beta/\alpha^{1-1/\gamma}$. The regime $\beta/\alpha^{1-1/\gamma} \to c \in [0,\infty)$ is related to Theorem \ref{thm: zooming intro} and the result in that case can be stated as follows, see Theorem \ref{thm: subcritical case} for a more general statement.
\begin{thm}\label{thm: subcritical}
	Assume that $\alpha \to \infty$, $\beta \geqslant 0$ and $\beta/\alpha^{1-1/\gamma} \to c \in [0,\infty)$. Let $\rdtree$ be the normalized stable tree with branching mechanism $\psi(\lambda) = \lambda^\gamma$ where $\gamma\in (1,2]$ and denote by $\H$ its height. Then we have the following convergence in distribution under $\excm{1}$
		\begin{equation}
			 \alpha^{1-1/\gamma}\H^{-\beta}\Z_{\alpha,\beta}
			\xrightarrow[\alpha \to \infty]{(d)} \int_0^\infty \e^{-S_t-ct/\H}\, \dd t,
		\end{equation}
	where $(S_t, t \geqslant 0)$ is a stable subordinator with Laplace exponent $\phi(\lambda)=\gamma \lambda^{1-1/\gamma}$, independent of $\rdtree$.
\end{thm}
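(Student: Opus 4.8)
The plan is to turn $\Z_{\alpha,\beta}$ into a level integral, localise it near the root — where it is governed by the mass of the largest fragment raised to the power $\alpha+1$ — rescale space by a factor $\epsilon(\alpha)\to0$ adapted to $\alpha$, and apply Theorem~\ref{thm: zooming intro}. \emph{Step~1 (level integral and reduction to the largest fragment).} For $r\ge0$ let $(C_i^{(r)})_i$ be the connected components of $\{x\in\rdtree:H(x)>r\}$, with masses $\sigma_i^{(r)}=\mu(C_i^{(r)})$ and heights $\H_i^{(r)}$ measured from level $r$, so that $\sigma_{r,x}=\sigma_i^{(r)}$ and $\H_{r,x}=\H_i^{(r)}$ for $x\in C_i^{(r)}$; by Fubini,
\begin{equation*}
\Z_{\alpha,\beta}=\int_0^{\H}\sum_i(\sigma_i^{(r)})^{\alpha+1}(\H_i^{(r)})^{\beta}\,\dd r .
\end{equation*}
Order the masses decreasingly and set $M_r=\sigma_1^{(r)}$, $\bar\H_r=\H_1^{(r)}$. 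Since $2\sigma_2^{(r)}\le\sigma_1^{(r)}+\sigma_2^{(r)}\le1$, we have $\sum_{i\ge2}(\sigma_i^{(r)})^{\alpha+1}\le2^{-\alpha}\sum_{i\ge2}\sigma_i^{(r)}\le2^{-\alpha}$, so the non-largest components contribute at most $\H^{\beta+1}2^{-\alpha}$ to $\Z_{\alpha,\beta}$. Fix a leaf $y^*$ of maximal height and let $\rho>0$ (a.s.\ positive) be the largest level below which the component of $\{H>r\}$ containing $U$ coincides with the one containing $y^*$ and is of largest mass; then $M_r=\sigma_{r,U}$ and $\bar\H_r=\H-r$ for $0<r<\rho$, whereas $\int_{\rho}^{\H}M_r^{\alpha+1}\,\dd r\le\H\,M_\rho^{\alpha+1}$ with $M_\rho<1$ a.s.\ (there is positive mass below level $\rho$). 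After multiplication by $\alpha^{1-1/\gamma}\H^{-\beta}$ both of these error terms are $o_{\mathbb P}(1)$.

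\emph{Step~2 (rescaling and the zooming-in limit).} Put $\epsilon=\epsilon(\alpha):=(\alpha+1)^{-(\gamma-1)/\gamma}$, so that $\alpha^{1-1/\gamma}\epsilon\to1$, $(\alpha+1)\epsilon^{\gamma/(\gamma-1)}=1$, and $\beta\epsilon\to c$ by the assumption $\beta/\alpha^{1-1/\gamma}\to c$. By Step~1 and the substitution $r=\epsilon t$,
\begin{equation*}
\alpha^{1-1/\gamma}\H^{-\beta}\Z_{\alpha,\beta}=(1+o(1))\int_0^{\infty}M_{\epsilon t}^{\alpha+1}\Bigl(1-\tfrac{\epsilon t}{\H}\Bigr)_{+}^{\beta}\ind{\epsilon t<\rho}\,\dd t+o_{\mathbb P}(1).
\end{equation*}
Fix $t>0$: then $(1-\epsilon t/\H)_{+}^{\beta}\to\e^{-ct/\H}$ and $\ind{\epsilon t<\rho}\to1$, and on the latter event $M_{\epsilon t}=\sigma_{\epsilon t,U}$. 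Apply Theorem~\ref{thm: zooming intro}(i) with, say, $\f(\epsilon)=\epsilon^{2/3}$ (it satisfies $\epsilon^{-1/2}\f(\epsilon)\to0$ and $\epsilon^{-1}\f(\epsilon)\to\infty$) and the test function $\Phi_t(h,b,T)=b\,\ind{h\le t}$, which is Lipschitz in the variable $b$ with constant $1$. Since $\langle\mathcal{N}_\epsilon^{\f}(U),\Phi_t\rangle=\epsilon^{-\gamma/(\gamma-1)}\sum_{h_i\le\epsilon t}\sigma_i=\epsilon^{-\gamma/(\gamma-1)}(1-\sigma_{\epsilon t,U})$ for $\epsilon$ small, this gives, jointly with $(\rdtree,H(U))$ and (taking nonnegative linear combinations of the $\Phi_t$) for any finite family of times,
\begin{equation*}
\epsilon^{-\gamma/(\gamma-1)}\bigl(1-\sigma_{\epsilon t,U}\bigr)\ \xrightarrow[\alpha\to\infty]{(d)}\ \sum_{0\le s\le t}\mu(\tree_s')=:S_t ,
\end{equation*}
where $(S_t)_{t\ge0}$ is a subordinator independent of $(\rdtree,H(U))$; a computation of its Laplace exponent from the intensity $\m$ of \eqref{eq: intensity measure} shows that $S$ is the stable subordinator with $\phi(\lambda)=\gamma\lambda^{1-1/\gamma}$ (equivalently, $\epsilon^{-\gamma/(\gamma-1)}(1-M_{\epsilon t})\to S_t$ is \cite[Corollary~17]{haas2007fragmentation}). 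As $(\alpha+1)\epsilon^{\gamma/(\gamma-1)}=1$ and $1-M_{\epsilon t}\to0$, it follows that $M_{\epsilon t}^{\alpha+1}=(1-(1-M_{\epsilon t}))^{\alpha+1}\to\e^{-S_t}$, jointly with $(\rdtree,\H)$. Since $r\mapsto M_r$ is nonincreasing and $S$ is c\`adl\`ag, these finite-dimensional convergences and bounded convergence on $[0,A]$ give, for each fixed $A>0$,
\begin{equation*}
\int_0^{A}M_{\epsilon t}^{\alpha+1}\Bigl(1-\tfrac{\epsilon t}{\H}\Bigr)_{+}^{\beta}\ind{\epsilon t<\rho}\,\dd t\ \xrightarrow[\alpha\to\infty]{(d)}\ \int_0^{A}\e^{-S_t-ct/\H}\,\dd t ,\qquad S\perp(\rdtree,\H).
\end{equation*}

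\emph{Step~3 (uniform tail bound and conclusion).} Let $W_\alpha=\alpha^{1-1/\gamma}\H^{-\beta}\Z_{\alpha,\beta}$ and, for $A>0$, let $W_\alpha^{(A)}=\alpha^{1-1/\gamma}\H^{-\beta}\int_0^{\epsilon A}\sum_i(\sigma_i^{(r)})^{\alpha+1}(\H_i^{(r)})^{\beta}\,\dd r$, so $0\le W_\alpha^{(A)}\le W_\alpha$; by Step~1 (applied on $[0,\epsilon A]$) and Step~2, $W_\alpha^{(A)}\xrightarrow[\alpha\to\infty]{(d)}\int_0^{A}\e^{-S_t-ct/\H}\,\dd t$, jointly with $(\rdtree,\H)$. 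Using $\H_i^{(r)}\le\H$ and the identity $\ex{\sum_i(\sigma_i^{(r)})^{\alpha+1}}=\ex{\sigma_{r,U}^{\alpha}\ind{H(U)>r}}$ (valid because conditionally on $\rdtree$ one has $U\sim\mu$),
\begin{equation*}
\ex{W_\alpha-W_\alpha^{(A)}}\ \le\ \alpha^{1-1/\gamma}\int_{\epsilon A}^{\infty}\ex{\sigma_{r,U}^{\alpha}\ind{H(U)>r}}\,\dd r\ =\ (\alpha^{1-1/\gamma}\epsilon)\int_{A}^{\infty}\ex{\sigma_{\epsilon t,U}^{\alpha}\ind{H(U)>\epsilon t}}\,\dd t .
\end{equation*}
The Bismut decomposition of the stable tree at the uniform leaf $U$ provides an explicit expression for $\ex{\sigma_{r,U}^{\alpha}\ind{H(U)>r}}$ in terms of the branching mechanism (the computation behind the first-moment formulas of \cite{delmas2018,abraham2020global}), from which one deduces that the right-hand side above converges, as $\alpha\to\infty$, to $\int_A^{\infty}\e^{-\gamma t}\,\dd t$ (in accordance with $\ex{\e^{-S_t}}=\e^{-\phi(1)t}=\e^{-\gamma t}$), which tends to $0$ as $A\to\infty$. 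Together with $\int_0^{A}\e^{-S_t-ct/\H}\,\dd t\uparrow\int_0^{\infty}\e^{-S_t-ct/\H}\,\dd t<\infty$ a.s.\ as $A\to\infty$ (the limit is finite since $\ex{\int_0^{\infty}\e^{-S_t}\,\dd t}=1/\gamma$), a standard truncation argument for convergence in distribution yields
\begin{equation*}
\alpha^{1-1/\gamma}\H^{-\beta}\Z_{\alpha,\beta}\ \xrightarrow[\alpha\to\infty]{(d)}\ \int_0^{\infty}\e^{-S_t-ct/\H}\,\dd t ,
\end{equation*}
with $S$ independent of $\rdtree$, as claimed.

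\emph{The main obstacle.} Steps~1 and~2 are soft and use Theorem~\ref{thm: zooming intro} only through the elementary functionals $\Phi_t$. The delicate point is the uniform-in-$\alpha$ control of the tail $\ex{W_\alpha-W_\alpha^{(A)}}$ in Step~3 — equivalently, the dominated convergence allowing one to pass from $\int_0^A$ to $\int_0^\infty$: there is no pathwise bound on $M_{\epsilon t}^{\alpha+1}$ that is integrable in $t$ and uniform in $\epsilon$, so one has to feed in the explicit first-moment formula for $\Z_{\alpha,\beta}$ (or for $\ex{\sigma_{r,U}^{\alpha}\ind{H(U)>r}}$) coming from the Bismut decomposition. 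This is most apparent when $c=0$: there the factor $(1-\epsilon t/\H)_{+}^{\beta}$ provides no damping, and all the decay in $t$ must be extracted from the mass factor $M_{\epsilon t}^{\alpha+1}$.
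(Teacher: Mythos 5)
Your argument is correct, and it reaches the conclusion by a route that is genuinely leaner than the paper's in one key respect. The paper proves the convergence of $\alpha^{1-1/\gamma}\H^{-\beta}\Z_{\alpha,\beta}$ in two stages: first Proposition \ref{prop: subcritical case} for the Bismut-type functional $Z_{\alpha,\beta}(U)$ of a uniform leaf $U$, then the technically heavy Lemma \ref{lemma: convergence in proba}, which transfers the result to the averaged quantity $\Z_{\alpha,\beta}$ via second-moment/uniform-integrability estimates (Corollary \ref{lemma: second moment}) and a five-term decomposition. You bypass that entire second stage: writing $\Z_{\alpha,\beta}$ as a level integral over the fragmentation components and observing that all non-largest components together contribute at most $\H^{\beta+1}2^{-\alpha}$, you reduce $\Z_{\alpha,\beta}$ directly to the largest-fragment integral, which for levels $r<\rho$ coincides with $\int\sigma_{r,U}^{\alpha+1}(\H-r)^{\beta}\,\dd r$ (the height being exactly $\H-r$ there by Lemma \ref{lemma: height is linear}). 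This is essentially the fragmentation shortcut the introduction alludes to, but — by tracking the leaf $x^*$ rather than only the law of $F_1^-$ — it does accommodate $\beta\neq0$, contrary to the caveat stated there. The remaining ingredients are the same as the paper's: Theorem \ref{thm: zooming intro}(i) (equivalently Corollary \ref{corollary: zooming mass}) to identify the subordinator, truncation of the time integral at $A$, and the first-moment formula from the Bismut/spinal decomposition (Corollary \ref{cor: first moment Psi}, leading to the analogue of \eqref{eq: remainder sequence}) to kill the tail uniformly in $\alpha$. What your route buys is a substantially shorter proof of the marginal statement; what it gives up is the joint convergence of $(Z_{\alpha,\beta}(U),\Z_{\alpha,\beta})$ recorded in Theorem \ref{thm: subcritical case}, which the paper's longer argument delivers as a by-product.

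Two places are stated more tersely than they should be, though both are repairable by standard arguments already present in the paper. First, passing from the finite-dimensional convergence $M_{\epsilon t}^{\alpha+1}\to\e^{-S_t}$ to convergence of $\int_0^A$ needs an upgrade to a functional statement; your appeal to monotonicity works (Skorokhod representation on $\real^{\mathbb{Q}\cap[0,A]}$, then monotone interpolation and bounded convergence), but the paper's Corollary \ref{corollary: zooming mass} does this cleanly in $\D$ and you could simply cite it. Second, the claimed limit $\int_A^\infty\e^{-\gamma t}\,\dd t$ for the tail term itself requires an interchange of limit and integral; what is actually needed — and what the explicit formula of Corollary \ref{cor: first moment Psi} delivers, exactly as in the display preceding \eqref{eq: remainder sequence} — is only $\lim_{A\to\infty}\limsup_{\alpha\to\infty}\ex{W_\alpha-W_\alpha^{(A)}}=0$, so your identification of this as the one genuinely quantitative input is accurate.
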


Let us briefly explain why we get a subordinator $S$ at the limit. It is well known that $\mu$ is supported on the set of leaves of $\rdtree$. Let $x\in \rdtree$ be a leaf and recall that $\sigma_{r,x}$ is the mass of the subtree above level $r$ containing $x$.  Since the total mass of the stable tree is $1$, the main contribution to $Z_{\alpha,\beta}(x)$ as $\alpha \to \infty$ comes from large subtrees $\rdtree_{r,x}$ with $r$ close to $0$. The height $\H_{r,x}$ of such subtrees is approximately $\H - r$. On the other hand, their mass is equal to $1$ minus the mass we discarded from the subtrees grafted on the branch $\llbracket \root,x\rrbracket$ at height less than $r$. By Theorem \ref{thm: zooming intro}, subtrees are grafted on $\llbracket \root,x\rrbracket$ according to a point process which is approximately Poissonian, at least close to the root $\root$. Thus the mass $\sigma_{r,x}$ is approximately $1-S_r$.

Theorem \ref{thm: subcritical case} is slightly more general: we prove joint convergence in distribution of $\alpha^{1-1/\gamma}\H^{-\beta}\Z_{\alpha,\beta}$ and $\alpha^{1-1/\gamma} \H^{-\beta}Z_{\alpha,\beta}(U)$, where $U\in \rdtree$ is a leaf chosen uniformly at random (\textit{i.e.} according to the measure $\mu$), to the same random variable. In other words, taking the average of $Z_{\alpha,\beta}(x)$ over all leaves yields the same asymptotic behavior as taking a leaf uniformly at random. This is due to the following observations: a) a uniform leaf $U$ is not too close to the root with high probability in the sense that its most recent common ancestor with $x^*$ has height greater than $\epsilon$, where $x^*$ is the heighest leaf of $\rdtree$, b) when taking the average over all leaves, the contribution of those leaves whose most recent common ancestor with $x^*$ has height less than $\epsilon$ is negligible, and c) for those $x \in \rdtree$ whose most recent common ancestor with $x^*$ has height greater than $\epsilon$, the main contribution to $Z_{\alpha,\beta}(x)$ comes from large subrees $\rdtree_{r,x}$ with $r \leqslant \epsilon$, these subtrees are common to all such leaves as $\rdtree_{r,x} = \rdtree_{r,x^*}$. This is made rigorous in Lemma \ref{lemma: convergence in proba}.

Let us make a connection with Theorem 1.18 of Fill and Janson \cite{fillsum}. Recall that the normalized Brownian tree with branching mechanism $\psi(\lambda) = \lambda^2$ is coded by $\sqrt{2}\Bex$ where $\Bex$ is the normalized Brownian excursion, see \cite{duquesne2002random}. Thanks to the representation formula of \cite[Lemma 8.6]{delmas2018}, we see that Fill and Janson's $Y(\alpha)=\sqrt{2}\Z_{\alpha-1,0}$. Thus, we recover their result in the Brownian case $\gamma=2$ when $\beta =0$ (in which case $c=0$).

Notice that as long as the exponent $\beta$ of the height does not grow too quickly, viz. $\beta/\alpha^{1-1/\gamma} \to 0$, the additional dependence on the height makes no contribution at the limit. On the other hand, in the regime $\beta/\alpha^{1-1/\gamma} \to \infty$, the height $\H_{r,x}^\beta$ dominates the mass $\sigma_{r,x}^\alpha$ so we get the convergence in probability of $\Z_{\alpha,\beta}$ with a different scaling and there is no longer a subordinator at the limit. See Theorem \ref{thm: supercritical case} for a more general statement.
\begin{thm}\label{thm: supercritical}
	Assume that $\beta \to \infty$, $\alpha \geqslant 0$ and $\alpha^{1-1/\gamma}/\beta\to 0$. Let $\rdtree$ be the normalized stable tree with branching mechanism $\psi(\lambda) = \lambda^\gamma$ where $\gamma\in (1,2]$. Then we have the following convergence in $\excm{1}$-probability
	\begin{equation}
		\lim_{\beta\to \infty} \beta \H^{-\beta}\Z_{\alpha,\beta} = \H.
	\end{equation}
\end{thm}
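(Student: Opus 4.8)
\emph{Overview of the strategy.} The plan is to prove separately a deterministic upper bound $\beta\H^{-\beta}\Z_{\alpha,\beta}\leqslant\tfrac{\beta}{\beta+1}\H$ and a matching lower bound in $\excm{1}$-probability. The upper bound is immediate: $\mu$ is a probability measure under $\excm{1}$, $\sigma_{r,x}\in[0,1]$ and $\alpha\geqslant 0$ so $\sigma_{r,x}^\alpha\leqslant 1$, and $\H_{r,x}\leqslant\H-r$ for every $x\in\rdtree$ and $r\in[0,H(x)]$ (the subtree above level $r$ containing $x$ has all its points at height at most $\H$, hence height at most $\H-r$ measured from level $r$); therefore
\[
\Z_{\alpha,\beta}\leqslant\int_\rdtree\mu(\dd x)\int_0^{H(x)}(\H-r)^\beta\,\dd r\leqslant\int_0^\H(\H-r)^\beta\,\dd r=\frac{\H^{\beta+1}}{\beta+1}.
\]
This already gives $\beta\H^{-\beta}\Z_{\alpha,\beta}-\H\leqslant 0$, so it remains only to show that $\pr{\beta\H^{-\beta}\Z_{\alpha,\beta}<\H-\eta}\to 0$ for every $\eta>0$.

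\emph{The lower bound.} Let $x^*$ be the a.s.\ unique highest leaf of $\rdtree$, so $\H=H(x^*)$, let $U$ be the uniform leaf, and for $x,y\in\rdtree$ write $x\wedge y$ for their branch point. Fix a level $\delta>0$ and work on the event $A_\delta:=\{H(U\wedge x^*)>\delta\}=\{U\in\rdtree_{\delta,x^*}\}$, which has $\excm{1}$-probability $\ex{\sigma_{\delta,x^*}}$. On $A_\delta$, for every leaf $x\in\rdtree_{\delta,U}$ and every $r\leqslant\delta$ one has $\rdtree_{r,x}=\rdtree_{r,U}$ (hence $\sigma_{r,x}=\sigma_{r,U}$) and $x^*\in\rdtree_{r,U}$, so $\H_{r,x}=\H-r$; as moreover $H(x)\geqslant\delta$, using $\sigma_{r,U}\geqslant\sigma_{\delta,U}$ for $r\leqslant\delta$ and $\alpha\geqslant 0$ gives
\[
Z_{\alpha,\beta}(x)\geqslant\int_0^\delta\sigma_{r,U}^{\alpha}(\H-r)^\beta\,\dd r\geqslant\sigma_{\delta,U}^{\alpha}\int_0^\delta(\H-r)^\beta\,\dd r.
\]
Integrating over $\rdtree_{\delta,U}$, which carries $\mu$-mass $\sigma_{\delta,U}$, and computing $\int_0^\delta(\H-r)^\beta\,\dd r=\tfrac{1}{\beta+1}\bigl(\H^{\beta+1}-(\H-\delta)^{\beta+1}\bigr)$, I obtain on $A_\delta\cap\{\delta<\H\}$
\[
\beta\H^{-\beta}\Z_{\alpha,\beta}\ \geqslant\ \sigma_{\delta,U}^{\alpha+1}\cdot\frac{\beta}{\beta+1}\cdot\H\cdot\Bigl(1-\bigl(1-\tfrac{\delta}{\H}\bigr)^{\beta+1}\Bigr).
\]
It then remains to choose $\delta=\delta(\alpha,\beta)\to 0$ so that $\pr{A_\delta^c}+\pr{\delta\geqslant\H}\to 0$ and the three correction factors on the right tend to $1$ in probability.

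\emph{Choice of $\delta$ and verification.} Because $\alpha^{1-1/\gamma}/\beta\to 0$, one can pick $\delta\to 0$ with $\beta\delta\to\infty$ and $\alpha^{1-1/\gamma}\delta\to 0$ simultaneously, e.g.\ $\delta=\bigl(\beta(1+\alpha^{1-1/\gamma})\bigr)^{-1/2}$. Then: $\pr{\delta\geqslant\H}\to 0$ since $\H>0$ a.s.; $\pr{A_\delta^c}=\ex{1-\sigma_{\delta,x^*}}\to 0$ by dominated convergence, because $\sigma_{\delta,x^*}\uparrow 1$ as $\delta\downarrow 0$ (the root of the stable tree being a.s.\ a leaf, $\rdtree_{\delta,x^*}$ exhausts the mass); $(1-\delta/\H)^{\beta+1}\leqslant\e^{-(\beta+1)\delta/\H}\to 0$ in probability since $\beta\delta\to\infty$ and $\H\in(0,\infty)$ a.s.; and $\sigma_{\delta,U}^{\alpha+1}\geqslant 1-(\alpha+1)(1-\sigma_{\delta,U})\to 1$ in probability. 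This last point rests on the key estimate
\[
1-\sigma_{\delta,U}=O_P\bigl(\delta^{\gamma/(\gamma-1)}\bigr)\qquad\text{as }\delta\downarrow 0,
\]
which yields $(\alpha+1)(1-\sigma_{\delta,U})=O_P\bigl((1+\alpha)\delta^{\gamma/(\gamma-1)}\bigr)$ with $(1+\alpha)\delta^{\gamma/(\gamma-1)}\leqslant 2(\alpha^{1-1/\gamma}\delta)^{\gamma/(\gamma-1)}+2\,\delta^{\gamma/(\gamma-1)}\to 0$. Combined with the deterministic upper bound, this gives $\beta\H^{-\beta}\Z_{\alpha,\beta}\to\H$ in $\excm{1}$-probability.

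\emph{Where the difficulty lies.} The substantive input is the estimate on $1-\sigma_{\delta,U}$, i.e.\ tightness of $\delta^{-\gamma/(\gamma-1)}(1-\sigma_{\delta,U})$ as $\delta\downarrow 0$ — the total mass of the subtrees grafted on $\llbracket\root,U\rrbracket$ below a small level $\delta$ is of order $\delta^{\gamma/(\gamma-1)}$. This is precisely the scaling of the spinal mass process underlying Theorem \ref{thm: subcritical}, and I would deduce it from Theorem \ref{thm: zooming intro}(ii) applied with $\Phi(h,b,T)=b$ (Lipschitz in $b$, hence admissible): there $\langle\mathcal{N}_\epsilon^\f(U),\Phi\rangle=\epsilon^{-\gamma/(\gamma-1)}\sum_{h_i\leqslant\epsilon H(U)}\sigma_i=\epsilon^{-\gamma/(\gamma-1)}\bigl(1-\sigma_{\epsilon H(U),U}\bigr)$ converges in distribution to the a.s.\ finite quantity $\sum_{s\leqslant H(U)}\mu(\tree_s')$; a routine conditioning on the value of $H(U)$, together with the monotonicity of $\delta\mapsto 1-\sigma_{\delta,U}$, removes the random time-change $\epsilon=\delta/H(U)$ and yields the estimate for deterministic $\delta$. (Alternatively, one can use the Bismut decomposition of the stable tree along $\llbracket\root,U\rrbracket$, under which the grafted subtrees form an explicit Poisson point process, together with the absolute continuity near the root between $\excm{1}$ and the excursion measure.) I expect this estimate — controlling the mass cut off near the root at the sharp scale $\delta^{\gamma/(\gamma-1)}$ and handling the random normalization $H(U)$ — to be the main obstacle; the remaining ingredients (uniqueness of $x^*$, the root being a.s.\ a leaf so that $H(U\wedge x^*)>0$ a.s., and the elementary inequalities) are routine.
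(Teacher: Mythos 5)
Your proof is correct, and it takes a genuinely different route from the paper's. The paper writes $\beta\H^{-\beta}\Z_{\alpha,\beta}=E_\beta+\sum_{i=1}^4F_\beta^i$ with $\epsilon=(\alpha^{1-1/\gamma}\beta)^{-1/2}$, kills the four remainder terms one by one (this requires the negative moments of $H(U)$ from Lemma \ref{lemma: height integrable} and a first-moment bound on $\int_\rdtree\mu(\dd x)\int_0^{H(x)}\sigma_{r,x}^\alpha\,\dd r$ to handle $F_\beta^1$ and $F_\beta^2$), and then sandwiches $E_\beta$ between $\H$ and $\H(1-\e^{-\beta\epsilon/\H})\int_\rdtree\ind{H(x)\geqslant 2\epsilon}\sigma_{\epsilon,x}^\alpha\,\mu(\dd x)$; the crux is showing that this last integral tends to $1$, which the paper does via Bismut's decomposition and the scaling identity $\alpha S(\epsilon S_t^{1-1/\gamma})\lawd S(\epsilon S_{\alpha^{1-1/\gamma}t}^{1-1/\gamma})$. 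Your deterministic upper bound $\beta\H^{-\beta}\Z_{\alpha,\beta}\leqslant\frac{\beta}{\beta+1}\H$ is a one-line replacement for all of the paper's remainder analysis, and your lower bound — restricting the $\mu$-integral to $\rdtree_{\delta,U}$, on which $\sigma_{r,x}=\sigma_{r,U}$ and $\H_{r,x}=\H-r$ for $r\leqslant\delta$ — is a cleaner geometric version of the paper's $E_\beta$ estimate. Both proofs ultimately rest on the same probabilistic fact, namely that the spinal mass cut off below level $\delta$ is $O_P(\delta^{\gamma/(\gamma-1)})$ so that $\sigma_{\delta,\cdot}^{\alpha}\to 1$ when $\alpha^{1-1/\gamma}\delta\to 0$; you extract it from Theorem \ref{thm: zooming intro}(ii) (or Corollary \ref{corollary: zooming mass}) applied to the uniform leaf, with the truncation $\{H(U)\geqslant c\}$ and monotonicity of $r\mapsto 1-\sigma_{r,U}$ to pass from the random level $\epsilon H(U)$ to a deterministic one — that argument does go through and is not circular, since those results are proved independently in Section \ref{section: zooming}. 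What your approach buys is economy (no case split between $\alpha\to\infty$ and $\alpha$ bounded, no auxiliary moment bounds); what the paper's buys is that its direct subordinator computation also yields the almost-sure refinement under the stronger hypothesis $\alpha^{1-1/\gamma}/\beta^\rho\to 0$, which your in-probability tightness argument would not give without further work.
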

\begin{remark}
Assume that $\alpha,\beta \to \infty$ and $\beta/\alpha^{1-1/\gamma} \to c \in (0,\infty)$ so that Theorem \ref{thm: subcritical} applies. Then we have the convergence in distribution under $\excm{1}$
	\begin{equation*}
		\beta \H^{-\beta} \Z_{\alpha,\beta} = \frac{\beta}{\alpha^{1-1/\gamma}} \alpha^{1-1/\gamma}\H^{-\beta}\Z_{\alpha,\beta} \xrightarrow[\beta \to \infty]{(d)} c \int_0^\infty \e^{-S_t-ct/\H} \, \dd t = \H\int_0^\infty \e^{-S_{\H t/c}-t}\, \dd t.
	\end{equation*}
Now letting $c \to \infty$, the right-hand side converges to $\H\int_0^\infty \e^{-t}\, \dd t =\H$. Thus, one may view Theorem \ref{thm: supercritical} as a special case of Theorem \ref{thm: subcritical} by saying that, if $\beta \to \infty$ and $\beta/\alpha^{1-1/\gamma} \to c\in(0,\infty]$, then we have the convergence in distribution under $\excm{1}$
\begin{equation*}
	\beta\H^{-\beta}\Z_{\alpha,\beta}
	\xrightarrow[\beta \to \infty]{(d)} c\int_0^\infty \e^{-S_t-ct/\H}\, \dd t,
\end{equation*}
where the measure $c \e^{-ct/\H}\, \dd t$ on $[0,\infty)$ should be understood as $\H\delta_0$ if $c=\infty$.
\end{remark}

We conclude the introduction by giving a decomposition of a general (compact) Lévy tree used in the proof of Theorem \ref{thm: subcritical} which is of independent interest. Consider a Lévy tree $\rdtree$ under its excursion measure $\n$ associated with a branching mechanism $\psi(\lambda) = a\lambda + b\lambda^2+\int_0^\infty (\e^{-\lambda r} -1+\lambda r)\, \pi(\dd r)$ where $a,b \geqslant 0$ and $\pi$ is a $\sigma$-finite measure on $(0,\infty)$ satisfying $\int_0^\infty (r\wedge r^2)\,\pi(\dd r)<\infty$. We further assume that the Grey condition holds $\int^\infty \dd \lambda/\psi(\lambda)<\infty$ which is equivalent to the compactness of the Lévy tree. We refer to \cite[Section 1]{duquesne2002random} for a complete presentation of the subject. For every $x \in \rdtree$ and every $0\leqslant r< r'\leqslant H(x)$, we let $\rdtree_{[r,r'),x} = (\rdtree_{r,x}\setminus \rdtree_{r',x})\cup \{x_{r'}\}$ where $x_{r'}$ is the unique ancestor of $x$ at height $H(x_{r'}) =r'$ and $\rdtree_{r,x}$ is the subtree of $\rdtree$ above level $r$ containing $x$. The following result states that, when $x \in \rdtree$ and $0\eqqcolon r_0< r_1< \ldots < r_n <r_{n+1}\coloneqq H(x)$ are chosen “uniformly” at random under $\n$, then the random trees $\treeint{r_{i-1}}{r_{i}}{x}$, $1\leqslant i\leqslant n+1$ are independent and distributed as $\rdtree$ under $\n[\sigma\bullet]$, see Figure \ref{fig: decomposition n+1 subtrees}. In particular, this generalizes \cite[Lemma 6.1]{abraham2020global} which corresponds to $n=1$.
\begin{figure}[h]
	\centering
	\begin{tikzpicture}[>=stealth]
		\draw[thick, name path=tree] plot [smooth] coordinates {(0,0) (1,1)  (2,3) (2,4) (1.8,5)};
		\draw[->] (4,0) -- (4,5.5) node[right]{$r$};
		\draw ($(4,0)+(-2pt,0)$) -- ($(4,0)+(2pt,0)$) node[right]{$0$};
		\draw [blue,name path=tree1](0.74,0.65) circle [radius=1cm]node[label={[label distance=1cm]180:$\treeint{0}{r_1}{x}$}]{};
		\draw [red,name path=tree2]($(1.6,2.1)+(2pt,3pt)$) circle [radius=0.8cm]node[label={[label distance=1cm]180:$\treeint{r_1}{r_2}{x}$}]{};
		\draw [green,name path=tree3]($(2,3.3)+(3pt,1pt)$) circle [radius=0.4cm]node[label={[label distance=1cm]180:$\treeint{r_2}{r_3}{x}$}]{};
		\draw [magenta,name path=tree4]($(2,4.4)+(-1pt,-1pt)$) circle [radius=0.64cm]node[label={[label distance=1cm]180:$\treeint{r_3}{H(x)}{x}$}]{};
		\draw ($(4,5)+(-2pt,0)$) -- ($(4,5)+(2pt,0)$) node[right]{$H(x)$};
		\node [name intersections={of=tree and tree2,by={a,b}}]  at (b){};
		\draw [densely dotted] (b)--($(4,0)!(b)!(4,4)$) node[right]{$r_1$};
		\node [name intersections={of=tree and tree3,by={c,d}}]  at (c){};
		\draw [densely dotted] (c)--($(4,0)!(c)!(4,4)$) node[right]{$r_2$};
		\draw [densely dotted] (d)--($(4,0)!(d)!(4,4)$) node[right]{$r_3$};
		\node at (0.5,-1){$\rdtree$};
		\filldraw[fill=black](0,0) circle [radius=2pt] node[below left]{$\root$};
		\filldraw[fill=black](1.8,5) circle [radius=2pt] node[above]{$x$};
	\end{tikzpicture}
	\caption{The decomposition of $\rdtree$ under $\n$ into $n+1$ (with $n=3$) subtrees along the ancestral line of a uniformly chosen leaf $x$.}
	\label{fig: decomposition n+1 subtrees}
\end{figure}
\begin{thm}\label{thm: decomposition n+1 intro}
	Let $\rdtree$ be the Lévy tree with a general branching mechanism $\psi$ satisfying the Grey condition $\int^\infty \dd \lambda/\psi(\lambda) <\infty$ under its excursion measure $\n$. Then for every $n\geqslant 1$ and every nonnegative measurable functions $f_i, \ 1\leqslant i \leqslant n+1$ defined on $[0,\infty)\times\T$, we have with $r_0 = 0$ and $r_{n+1} = H(x)$
\begin{equation*}
	\n\left[\int_\rdtree \mu(\dd x) \int_{0< r_1< \ldots < r_n < H(x)} \prod_{i=1}^{n+1} f_i \left(r_i-r_{i-1},\treeint{r_{i-1}}{r_{i}}{x}\right) \,\prod_{i=1}^n \dd r_i\right]=\prod_{i=1}^{n+1} \n\left[\int_\rdtree \mu(\dd x) f_i(H(x),\rdtree)\right].
\end{equation*}
In particular, for every nonnegative measurable functions $g_i$, $1\leqslant i \leqslant n+1$ defined on $\T$, we have
\begin{equation*}
	\n\left[\int_\rdtree \mu(\dd x) \int_{0< r_1< \ldots < r_n < H(x)} \prod_{i=1}^{n+1} g_i \left(\treeint{r_{i-1}}{r_{i}}{x}\right) \,\prod_{i=1}^n \dd r_i\right] = \prod_{i=1}^{n+1} \n\left[\sigma g_i(\rdtree)\right].
\end{equation*}
\end{thm}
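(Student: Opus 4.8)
The plan is to prove the first (functional) identity by induction on $n$, and then derive the second one by specializing to $f_i(h,T) = g_i(T)$ and recognizing that $\n[\int_\rdtree \mu(\dd x)\, g_i(\rdtree)] = \n[\sigma g_i(\rdtree)]$ since the mass measure $\mu$ has total mass $\sigma = \mu(\rdtree)$ and the integrand does not depend on $x$. The base case $n=1$ is exactly the content of \cite[Lemma 6.1]{abraham2020global}, which states that under $\n[\int_\rdtree \mu(\dd x)\,\cdots]$, after picking $r_1 \in (0,H(x))$ uniformly, the two pieces $\treeint{0}{r_1}{x}$ and $\treeint{r_1}{H(x)}{x}$ are independent and each distributed as $\rdtree$ under $\n[\sigma \bullet]$; concretely,
\[
	\n\!\left[\int_\rdtree \mu(\dd x) \int_0^{H(x)} f_1(r_1, \treeint{0}{r_1}{x})\, f_2(H(x)-r_1, \treeint{r_1}{H(x)}{x})\, \dd r_1\right] = \prod_{i=1}^2 \n\!\left[\int_\rdtree \mu(\dd x)\, f_i(H(x), \rdtree)\right].
\]

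For the inductive step, I would isolate the last cut $r_n$: by Fubini, the left-hand side for $n$ equals
\[
	\n\!\left[\int_\rdtree \mu(\dd x) \int_0^{H(x)}\!\! \dd r_n\, f_{n+1}(H(x)-r_n, \treeint{r_n}{H(x)}{x}) \int_{0<r_1<\cdots<r_{n-1}<r_n} \prod_{i=1}^{n} f_i(r_i - r_{i-1}, \treeint{r_{i-1}}{r_i}{x}) \prod_{i=1}^{n-1}\dd r_i\right].
\]
The key observation is that the inner integral over $r_1,\dots,r_{n-1}$ depends on $x$ only through the truncated tree $\treeint{0}{r_n}{x}$ (together with its distinguished leaf-like point $x_{r_n}$, playing the role of $x$ for that subtree), because all the cuts $r_i$ and all the pieces $\treeint{r_{i-1}}{r_i}{x}$ for $i \le n$ live inside $\treeint{0}{r_n}{x}$, and $r_i - r_{i-1}$ is the height increment there. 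So the $n=1$ decomposition applied with the pair of functions $(F, f_{n+1})$, where
\[
	F(h, T') = \int_{0<r_1<\cdots<r_{n-1}<h} \prod_{i=1}^{n} f_i(r_i - r_{i-1}, T'_{[r_{i-1},r_i)}) \prod_{i=1}^{n-1}\dd r_i,
\]
turns the whole expression into $\n[\int_\rdtree \mu(\dd x)\, F(H(x), \rdtree)] \cdot \n[\int_\rdtree \mu(\dd x)\, f_{n+1}(H(x), \rdtree)]$. Finally, $\n[\int_\rdtree \mu(\dd x)\, F(H(x),\rdtree)]$ is precisely the left-hand side of the identity for $n-1$ with functions $f_1,\dots,f_n$, so the induction hypothesis gives $\prod_{i=1}^{n} \n[\int_\rdtree \mu(\dd x)\, f_i(H(x),\rdtree)]$, and combining the two factors completes the step.

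The main obstacle is making the informal phrase "the inner integral depends on $x$ only through $\treeint{0}{r_n}{x}$" precise at the level of measurability and of how the distinguished point is carried along. This requires setting up the decomposition of $\rdtree$ at level $r_n$ along the ancestral line of $x$ as a pair $(\treeint{0}{r_n}{x}, \rdtree_{r_n,x})$ in $\T^2$ glued at the ancestor $x_{r_n}$, checking that the map $x \mapsto (\treeint{0}{r_n}{x}, x_{r_n})$ is measurable, and verifying that the truncation operation commutes with iterated truncations, i.e. $(\treeint{0}{r_n}{x})_{[r_{i-1},r_i)} = \treeint{r_{i-1}}{r_i}{x}$ for $0 \le r_{i-1} < r_i \le r_n$. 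Once this compatibility and the measurability of $F$ are in hand, the argument is a clean application of \cite[Lemma 6.1]{abraham2020global} and Fubini; I would also need to note that $F$ is nonnegative and measurable so that \cite[Lemma 6.1]{abraham2020global} applies to the pair $(F, f_{n+1})$, which follows from Tonelli once measurability is established.
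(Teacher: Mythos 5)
Your route (induction on $n$ from the two-piece case) is genuinely different from the paper's, which applies Bismut's decomposition (Theorem \ref{thm: Bismut}) once to rewrite the whole left-hand side as an expectation over the Poisson point process of subtrees along the spine, so that the independence and the identical laws of \emph{all} $n+1$ pieces $\treeint{r_{i-1}}{r_i}{x}$ come out simultaneously from the Poisson property, followed by a change of variables and Bismut in reverse. Your reduction of the second display to the first is fine.

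There is, however, a gap in the inductive step that goes beyond the measurability bookkeeping you flag. The function
\[
F(h,T')=\int_{0<r_1<\cdots<r_{n-1}<h}\prod_{i=1}^{n}f_i\bigl(r_i-r_{i-1},T'_{[r_{i-1},r_i)}\bigr)\prod_{i=1}^{n-1}\dd r_i
\]
is \emph{not} a function on $[0,\infty)\times\T$: the decomposition $T'_{[r_{i-1},r_i)}$ of $T'=\treeint{0}{r_n}{x}$ is defined relative to the distinguished point $x_{r_n}$, which is not recoverable from the isometry class of $\treeint{0}{r_n}{x}$ (other leaves at height $r_n$ may exist, and in any case the marked point is extra data). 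So $F$ is a function of the \emph{marked} tree $(\treeint{0}{r_n}{x},x_{r_n})$, whereas the $n=1$ statement you invoke — \cite[Lemma 6.1]{abraham2020global}, i.e.\ the $n=1$ case of the theorem — only applies to functions of the pair (height of the marked point, unmarked tree). The same problem reappears when you identify $\n[\int_\rdtree\mu(\dd x)\,F(H(x),\rdtree)]$ with the left-hand side of the identity for $n-1$: that identification needs $F$ evaluated on the tree \emph{together with} the $\mu$-sampled leaf. To close the induction you therefore need a strengthened two-piece decomposition stating that, under $\n[\int_\rdtree\mu(\dd x)\int_0^{H(x)}\dd r_1\,\bullet]$, the marked trees $(\treeint{0}{r_1}{x},x_{r_1})$ and $(\treeint{r_1}{H(x)}{x},x)$ are independent and each distributed as $(\rdtree,U)$ under $\n[\sigma\,\bullet]$ with $U$ of law $\sigma^{-1}\mu$. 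That strengthening is true, but proving it essentially forces you back to the spinal (Bismut) description of the tree seen from a $\mu$-point — at which point the paper's one-shot argument for general $n$ is no extra work. As written, the proposal's appeal to the unmarked Lemma 6.1 does not suffice.
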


A consequence of this decomposition is the following result giving the joint distribution of $\rdtree_y$, the subtree of $\rdtree$ above vertex $y\in \rdtree$, and $H(y)$ when $y$ is chosen according to the length measure $\ell(\dd y)$ on the stable tree $\rdtree$ (which roughly speaking is the Lebesgue measure on the branches of $\rdtree$). In particular, this generalizes \cite[Proposition 1.6]{abraham2020global}.
\begin{corollary}
	Let $\rdtree$ be the normalized stable tree with branching mechanism $\psi(\lambda) = \lambda^\gamma$ where $\gamma \in (1,2]$. Let $f$ and $g$ be nonnegative measurable functions defined on $\T$ and $[0,\infty)$ respectively. We have
	\begin{equation}
		\excm{1}\left[\int_\rdtree f(\rdtree_y) g(H(y))\, \ell(\dd y)\right] = \n \left[\ind{\sigma <1} (1-\sigma)^{-1/\gamma} G(1-\sigma) f(\rdtree)\right]
	\end{equation}
	where 
	\begin{equation*}
		G(a) = \excm{1}\left[\int_\rdtree \mu(\dd x) g\left(a^{1-1/\gamma}H(x)\right)\right] , \quad \forall a >0.
	\end{equation*}
\end{corollary}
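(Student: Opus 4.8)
The plan is to deduce the formula from the $n=1$ case of Theorem~\ref{thm: decomposition n+1 intro}, using the self-similarity of the stable tree to pass from the excursion measure $\n$ to $\excm{1}$, together with the elementary identity linking the length and mass measures: for every nonnegative measurable $\Phi$ on $[0,\infty)\times\T$,
\begin{equation*}
	\int_\rdtree\mu(\dd x)\int_0^{H(x)}\Phi\bigl(r,\rdtree_{r,x}\bigr)\,\dd r=\int_\rdtree\ell(\dd y)\,\mu(\rdtree_y)\,\Phi\bigl(H(y),\rdtree_y\bigr),
\end{equation*}
where $\rdtree_{r,x}=\rdtree_{x_r}$ is the subtree above the ancestor $x_r$ of $x$ at height $r$. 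Indeed, by definition of the length measure one has $\int_0^{H(x)}\Phi(r,\rdtree_{x_r})\,\dd r=\int_{\llbracket\root,x\rrbracket}\Phi(H(y),\rdtree_y)\,\ell(\dd y)$, and the claim follows by Fubini together with $\int_\rdtree\mu(\dd x)\ind{y\in\llbracket\root,x\rrbracket}=\mu(\rdtree_y)$.

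First I would prove that, for every $\lambda>0$ and all nonnegative measurable $f$ on $\T$ and $g$ on $[0,\infty)$,
\begin{equation*}
	\n\left[\e^{-\lambda\sigma}\int_\rdtree\ell(\dd y)\,f(\rdtree_y)\,g(H(y))\right]=\n\left[\e^{-\lambda\sigma}\int_\rdtree\mu(\dd x)\,g(H(x))\right]\cdot\n\left[\e^{-\lambda\sigma}f(\rdtree)\right].
\end{equation*}
To this end I apply Theorem~\ref{thm: decomposition n+1 intro} with $n=1$, $f_1(r,T)=g(r)\,\e^{-\lambda\mu(T)}$ and $f_2(s,T)=\e^{-\lambda\mu(T)}f(T)/\mu(T)$ (with $0/0:=0$). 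Since $\treeint{0}{r}{x}=(\rdtree\setminus\rdtree_{x_r})\cup\{x_r\}$ has mass $\sigma-\mu(\rdtree_{x_r})$ (as $\mu$ has no atoms) and $\treeint{r}{H(x)}{x}=\rdtree_{r,x}=\rdtree_{x_r}$ (since $x$ is a leaf), the product $f_1(r,\treeint{0}{r}{x})\,f_2(H(x)-r,\treeint{r}{H(x)}{x})$ equals $g(r)\,\e^{-\lambda\sigma}\,f(\rdtree_{x_r})/\mu(\rdtree_{x_r})$; pulling the constant $\e^{-\lambda\sigma}$ out of the $\mu(\dd x)\,\dd r$ integral and invoking the identity above with $\Phi(r,T)=g(r)f(T)/\mu(T)$ turns the left-hand side of Theorem~\ref{thm: decomposition n+1 intro} into the left-hand side of the display. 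On the right-hand side, the first factor is $\n[\e^{-\lambda\sigma}\int_\rdtree\mu(\dd x)g(H(x))]$, while the second is $\n[\int_\rdtree\mu(\dd x)\,\e^{-\lambda\sigma}f(\rdtree)/\sigma]=\n[\e^{-\lambda\sigma}f(\rdtree)]$ because $\int_\rdtree\mu(\dd x)=\sigma$.

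Next I would disintegrate over the total mass. Recall that $\n=\int_0^\infty c_\gamma\,s^{-1-1/\gamma}\,\excm{s}\,\dd s$ for a constant $c_\gamma>0$, where $\excm{s}$ is the image of $\excm{1}$ under $R_\gamma(\cdot,s^{1-1/\gamma})$; in particular $\excm{1}$ is the already-fixed mass-one law. Under $R_\gamma(\cdot,a)$ distances are multiplied by $a$ and $\mu$ by $a^{\gamma/(\gamma-1)}$, so with $a=s^{1-1/\gamma}$ masses are multiplied by $s$, whence $\excm{s}[\int_\rdtree\mu(\dd x)g(H(x))]=s\,G(s)$. Setting $\bar f(s):=\excm{s}[f(\rdtree)]$ and $D(s):=\excm{s}[\int_\rdtree\ell(\dd y)f(\rdtree_y)g(H(y))]$, so that $D(1)$ is the left-hand side of the corollary, the three expectations in the display above are, as functions of $\lambda$, the Laplace transforms of $s\mapsto c_\gamma s^{-1-1/\gamma}D(s)$, $s\mapsto c_\gamma s^{-1/\gamma}G(s)$ and $s\mapsto c_\gamma s^{-1-1/\gamma}\bar f(s)$, respectively. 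Since both sides of the corollary are monotone in $f$ and $g$, it is enough to treat $f,g$ bounded with $f$ supported on $\{T:\epsilon<\mu(T)<1\}$, in which case all three transforms are finite for $\lambda>0$. Injectivity of the Laplace transform and the convolution theorem then give, for Lebesgue-a.e.\ $s>0$ (and hence, by a continuity argument, for all $s>0$),
\begin{equation*}
	c_\gamma\,s^{-1-1/\gamma}D(s)=c_\gamma^{2}\int_0^s u^{-1/\gamma}G(u)\,(s-u)^{-1-1/\gamma}\,\bar f(s-u)\,\dd u.
\end{equation*}
Taking $s=1$ and substituting $u=1-\sigma$ yields $D(1)=c_\gamma\int_0^1\sigma^{-1-1/\gamma}(1-\sigma)^{-1/\gamma}G(1-\sigma)\,\bar f(\sigma)\,\dd\sigma$, which is precisely $\n[\ind{\sigma<1}(1-\sigma)^{-1/\gamma}G(1-\sigma)f(\rdtree)]$ after reassembling $\n$ through the disintegration; letting $\epsilon\to0$ removes the restriction on $f$.

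The step I expect to be the main obstacle is the decoupling in the second paragraph: the total mass $\sigma=\mu(\treeint{0}{r}{x})+\mu(\rdtree_{x_r})$ couples the two pieces of the spinal decomposition, and one must recognise that $\e^{-\lambda\sigma}$—an exponential, hence a function of a sum that factorises over the summands—is the right weight, and then choose $f_1,f_2$ so that, after this weighting, the integrand collapses exactly to $f(\rdtree_y)g(H(y))$ integrated against $\ell$. The Laplace-inversion bookkeeping of the last paragraph, including the passage from a.e.\ $s$ to the single value $s=1$, is routine once $f$ is restricted to trees of mass in $(\epsilon,1)$.
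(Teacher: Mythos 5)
Your proof is correct. The length--mass identity is stated and justified accurately (the only subtlety, that $\mu(\rdtree_y)>0$ for $\ell$-a.e.\ $y$ in the interior of the tree, holds for the Lévy tree so the cancellation in $\Phi(r,T)=g(r)f(T)/\mu(T)$ is legitimate), the application of Theorem~\ref{thm: decomposition n+1 intro} at $n=1$ with the exponentially weighted $f_1,f_2$ is carried out correctly, and the disintegration-plus-convolution bookkeeping reassembles into the stated identity.

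The route you take is essentially the paper's own, up to an organizational difference. You go back to Theorem~\ref{thm: decomposition n+1 intro} and redo the Laplace-transform/convolution argument directly for the particular $f_1,f_2$ you need, which amounts to re-proving a special case of Proposition~\ref{prop: n+1 stable}. The paper has already packaged exactly this work: Proposition~\ref{prop: n+1 stable} (and its $n=1$ specialization, Corollary~\ref{cor: first moment Psi}) carry out the disintegration, Laplace-inversion, and continuity argument once and for all, so the shortest path to the corollary is to invoke Corollary~\ref{cor: first moment Psi} with $f$ replaced by $f/\mu$, use the length--mass identity on the left, compute $\mu\circ R_\gamma(\rdtree,a^{1-1/\gamma})=a$ under $\excm{1}$ on the right, and recognize the remaining $\dd a$-integral as $\gamma\Gamma(1-1/\gamma)\,\n[\ind{\sigma<1}(1-\sigma)^{-1/\gamma}G(1-\sigma)f(\rdtree)]$. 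This avoids any appeal to injectivity of the Laplace transform or to the a.e.-to-everywhere continuity argument that you leave implicit; both of those are exactly the points the proof of Proposition~\ref{prop: n+1 stable} already disposes of (via GHP-continuity of $a\mapsto R_\gamma(T,a^{1-1/\gamma})$, dominated convergence for continuous bounded test functions, and monotone class). If you prefer your self-contained route, you should spell out that last continuity step rather than calling it routine, since it is the one place where something beyond measurability is genuinely used.
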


The paper is organized as follows. In Section \ref{section: real trees} we define the space of real trees and the Gromov-Hausdorff-Prokhorov topology. In Section \ref{section: stable tree}, we introduce the stable tree, recall some of its properties and prove Theorem \ref{thm: decomposition n+1 intro} as well as some other useful results. In Section \ref{section: zooming}, we prove Theorem \ref{thm: zooming intro}. Sections \ref{section: subcritical} and \ref{section: supercritical} deal with the asymptotic behavior of $\Z_{\alpha,\beta}$ when $\beta/\alpha^{1-1/\gamma} \to c \in [0,\infty)$ and $\beta /\alpha^{1-1/\gamma} \to \infty$ respectively. Finally, we gather some technical proofs in Section \ref{section: technical}.

\section{Real trees and the Gromov-Hausdorff-Prokhorov topology}\label{section: real trees}
\subsection{Real trees}
We recall the formalism of real trees, see \cite{evans2007probability}. A metric space $(T,d)$ is a real tree if the following two properties hold for every $x,y \in T$.
\begin{enumerate}[label=(\roman*),leftmargin=*]
	\item (Unique geodesics). There exists a unique isometric map $f_{x,y} \colon [0,d(x,y)] \to T$ such that $f_{x,y}(0) = x$ and $f_{x,y}(d(x,y)) = y$.
	\item (Loop-free). If $\phi$ is a continuous injective map from $[0,1]$ into $T$ such that $\phi(0) = x$ and $\phi(1) = y$, then we have
	\[\phi([0,1]) = f_{x,y}\left([0,d(x,y)]\right).\]
\end{enumerate}
A weighted rooted real tree $(T, \root, d , \mu)$ is a real tree $(T,d)$ with a distinguished vertex $\root \in T$ called the root and equipped with a nonnegative finite measure $\mu$. Let us consider a weighted rooted real tree $(T,\root,d,\mu)$. The range of the mapping $f_{x,y}$ described above is denoted by $\llbracket x,y\rrbracket$ (this is the line segment between $x$ and $y$ in the tree). In particular, $\llbracket\root,x\rrbracket$ is the path going from the root to $x$ which we will interpret as the ancestral line of vertex $x$. We define a partial order on the tree by setting $x \preccurlyeq y$ ($x$ is an ancestor of $y$) if and only if $x \in \llbracket\root,y\rrbracket$. If $x,y \in T$, there is a unique $z \in T$ such that $\llbracket\root,x\rrbracket \cap \llbracket\root, y\rrbracket = \llbracket\root,z \rrbracket$. We write $z = x \wedge y$ and call it the most recent common ancestor to $x$ and $y$. For every vertex $x \in T$, we define its height by $H(x) = d(\root, x)$. The height of the tree is defined by $\H(T) = \sup_{x\in T}H(x)$. Note that if $(T,d)$ is compact, then $\H(T) < \infty$.

Let $x \in T$ be a vertex. For every $r \in [0,H(x)]$, we denote by $x_r \in T$ the unique ancestor of $x$ at height $r$. Furthermore, we define the subtree $T_{r,x}$ of $T$ above level $r$ containing $x$ as
\begin{equation}\label{eq: definition T_rx}
	T_{r,x} = \left\{y \in T \colon \, H(x \wedge y) \geqslant r\right\}.
\end{equation}
Equivalently, $T_{r,x}=\{y \in T\colon\, x_r \preccurlyeq y\}$ is the subtree of $T$ above $x_r$. Then $T_{r,x}$ can be naturally viewed as a weighted rooted real tree, rooted at $x_r$ and endowed with the distance $d$ and the measure $\mu_{| T_{r,x}}$. Note that $T_{0,x} = T$. We also define the subtree of $T$ above $x$ by $T_x\coloneqq T_{H(x),x}$. Denote by
\begin{equation}
	\sigma_{r,x}(T) = \mu(T_{r,x}) \quad \text{and} \quad \H_{r,x}(T) = \H(T_{r,x})
\end{equation}
the total mass and the height of $T_{r,x}$. For every $\alpha, \beta \geqslant 0$, we define
\begin{equation}
		Z^T_{\alpha,\beta}(x) = \int_0^{H(x)}\sigma_{r,x}(T)^\alpha \H_{r,x}(T)^\beta\, \dd r, \quad \forall x \in T.
\end{equation}
We shall omit the dependence on $T$ when there is no ambiguity, simply writing $\sigma_{r,x}$, $\H_{r,x}$ and $Z_{\alpha,\beta}(x)$. For every $0\leqslant r <r'\leqslant H(x)$, we also introduce the notation
\begin{equation}\label{eq: definiton Tint}
	T_{[r,r'),x}	= \left(T_{r,x}\setminus T_{r',x}\right) \cup \{x_{r'}\}=\{y\in T\colon \, r \leqslant H(x\wedge y)<r'\}\cup \{x_{r'}\},
\end{equation}
which defines a weighted rooted real tree, equipped with the distance and the measure it inherits from $T$ and naturally rooted at $x_{r}$.

The next lemma, whose proof is elementary, relates $\H_{r,x}(T)$, the height of the subtree of $T$ above level $r$ containing $x$, to the total height $\H(T)$.
\begin{lemma}\label{lemma: height is linear}
	Let $T$ be a compact real tree. For every $x\in T$ and $r \in [0,H(x)]$, we have
	\begin{equation}\label{eq: height is sublinear}
		\H(T) \geqslant \H_{r,x}(T)+r.
	\end{equation}
	Furthermore, if $x^* \in T$ is such that $H(x^*) = \H(T)$, then for every $r \in [0,H(x\wedge x^*)]$, we have
	\begin{equation}\label{eq: height is linear}
		\H(T) = \H_{r,x}(T) + r.
	\end{equation}
\end{lemma}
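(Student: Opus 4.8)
The plan is to reduce both statements to the single identity
\[
\H_{r,x}(T) \;=\; \sup\{H(y) \colon y \in T_{r,x}\} \;-\; r,
\]
which rewrites the (intrinsic) height of the subtree $T_{r,x}$, rooted at $x_r$, in terms of the ambient height function $H = d(\root,\Cdot)$. Once this is available, \eqref{eq: height is sublinear} is immediate from the inclusion $T_{r,x} \subseteq T$, and \eqref{eq: height is linear} follows by exhibiting the maximizer $x^*$ as an element of $T_{r,x}$.

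First I would prove the identity. By the characterization $T_{r,x} = \{y \in T \colon x_r \preccurlyeq y\}$ recalled after \eqref{eq: definition T_rx}, every $y \in T_{r,x}$ has $x_r$ as an ancestor, so $x_r \in \llbracket \root, y\rrbracket$ and the segment $\llbracket \root, y\rrbracket$ splits as $\llbracket \root, x_r\rrbracket \cup \llbracket x_r, y\rrbracket$. Since $\llbracket \root,y\rrbracket$ is the image of the unique geodesic from $\root$ to $y$, this gives $d(\root,y) = d(\root,x_r) + d(x_r,y)$, that is $H(y) = r + d(x_r,y)$. As $T_{r,x}$ is rooted at $x_r$, its height is $\sup_{y \in T_{r,x}} d(x_r,y) = \sup_{y\in T_{r,x}} H(y) - r$, which is the claimed identity. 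Then $\H_{r,x}(T) + r = \sup_{y\in T_{r,x}} H(y) \leqslant \sup_{y\in T} H(y) = \H(T)$, proving \eqref{eq: height is sublinear}.

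For \eqref{eq: height is linear}, I would use instead the description $T_{r,x} = \{y \in T \colon H(x\wedge y) \geqslant r\}$ from \eqref{eq: definition T_rx}: if $0 \leqslant r \leqslant H(x\wedge x^*)$ then $H(x \wedge x^*) \geqslant r$, so $x^* \in T_{r,x}$. Hence $\sup_{y\in T_{r,x}} H(y) \geqslant H(x^*) = \H(T)$, and combined with the reverse inequality already obtained, $\sup_{y\in T_{r,x}} H(y) = \H(T)$, so $\H_{r,x}(T) = \H(T) - r$. The only step needing a word of justification — and it is not a real obstacle — is the additivity $d(\root,y) = d(\root,x_r) + d(x_r,y)$ along an ancestral line, which is the standard fact in a real tree that concatenating two geodesics meeting only at a common endpoint yields a geodesic; it is a direct consequence of the loop-free axiom. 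Everything else is a routine unwinding of the definitions.
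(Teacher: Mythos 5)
Your proof is correct, and it is the natural elementary argument: reduce everything to the identity $\H_{r,x}(T)=\sup_{y\in T_{r,x}}H(y)-r$ via the additivity of $d$ along ancestral lines, then observe that $T_{r,x}\subseteq T$ gives \eqref{eq: height is sublinear} and that $x^*\in T_{r,x}$ (when $r\leqslant H(x\wedge x^*)$) forces equality in \eqref{eq: height is linear}. The paper explicitly omits the proof as elementary, so there is no reference argument to compare against, but what you wrote is what the authors had in mind.
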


\subsection{The Gromov-Hausdorff-Prokhorov topology}
We denote by $\T$ the set of (measure-preserving, root-preserving isometry classes of) compact real trees. We will often identify a class with an element of this class. So we shall write $(T,\root, d, \mu)\in \T$ for a weighted rooted compact real tree.

Let us define the Gromov-Hausdorff-Prokhorov (GHP) topology on $\T$. Let $(T,\root,d, \mu),\allowbreak(T',\root',d',\mu')\in \T$ be two compact real trees. Recall that a correspondence between $T$ and $T'$ is a subset $\mathcal{R} \subset T \times T'$ such that for every $x \in T$, there exists $x' \in T'$ such that $(x,x') \in \mathcal{R}$, and conversely, for every $x'\in T'$, there exists $x \in T$ such that $(x,x') \in \mathcal{R}$. In other words, if we denote by $p \colon T \times T'\to T$ (resp. $p'\colon T \times T' \to T'$) the canonical projection on $T$ (resp. on $T'$), a correspondence is a subset $\mathcal{R} \subset T \times T'$ such that $p(\mathcal{R}) = T$ and $p'(\mathcal{R}) = T'$. If $\mathcal{R}$ is a correspondence between $T$ and $T'$, its distortion is defined by
\[\operatorname{dis}(\mathcal{R}) = \sup \left\{\left|d(x,y) - d'(x',y')\right| \colon \, (x,x'), (y,y') \in \mathcal{R}\right\}.\]
Next, for any nonnegative finite measure $m$ on $T \times T'$, we define its discrepancy with respect to $\mu$ and $\mu'$ by
\[\operatorname{D}(m; \mu, \mu') = d_{\mathrm{TV}}(m\circ p^{-1} ,\mu) + d_{\mathrm{TV}}(m \circ {p'}^{-1} ,\mu'),\]
where $d_{\mathrm{TV}}$ denotes the total variation distance. Then the GHP distance between $T$ and $T'$ is defined as
\begin{equation}\label{eq: definition GHP} \ghp(T,T') = \inf\left\{\frac{1}{2}\operatorname{dis}(\mathcal{R})\vee \operatorname{D}(m; \mu,\mu') \vee m(\mathcal{R}^c)\right\},
\end{equation}
where the infimum is taken over all correspondences $\mathcal{R}$ between $T$ and $T'$ such that $(\root, \root') \in \mathcal{R}$ and all nonnegative finite measures $m$ on $T \times T'$. It can be verified that $\ghp$ is indeed a distance on $\T$ and that the space $(\T,\ghp)$ is complete and separable, see e.g. \cite{addario2017scaling}. 

The next lemma gives an upper bound for the GHP distance between a tree $(T,\root,d,\mu) \in \T$ and the tree $(T,\root,ad,b\mu)$ obtained from $T$ by multiplying all distances by $a>0$ and the measure $\mu$ by $b>0$. The proof is elementary and is left to the reader.
\begin{lemma}\label{lemma: ghp dilatation}
	For every $T\in \T$ and $a,b >0$, we have
	\begin{equation}
		\ghp\left((T,\root,d,\mu),(T,\root,ad,b\mu)\right) \leqslant 2|a-1|\H(T) + |b-1| \mu(T).
	\end{equation}
\end{lemma}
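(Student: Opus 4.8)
The plan is to test the definition \eqref{eq: definition GHP} of $\ghp$ on the most economical choice of correspondence and transport plan, namely the ``identity''. Write $T_1=(T,\root,d,\mu)$ and $T_2=(T,\root,ad,b\mu)$, let $\mathcal{R}=\{(x,x)\colon x\in T\}\subset T\times T$ be the diagonal, and let $m$ be the image of $\mu$ under the diagonal embedding $x\mapsto(x,x)$. Then $\mathcal{R}$ is a closed (hence Borel) correspondence between $T_1$ and $T_2$ with $(\root,\root)\in\mathcal{R}$, and $m$ is a nonnegative finite measure on $T\times T$ carried by $\mathcal{R}$, so that $m(\mathcal{R}^c)=0$.

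It then remains to estimate the distortion and the discrepancy. For $(x,x),(y,y)\in\mathcal{R}$ the distance between the first coordinates in $T_1$ is $d(x,y)$ while that between the second coordinates in $T_2$ is $a\,d(x,y)$, so $\dis(\mathcal{R})=|1-a|\sup_{x,y\in T}d(x,y)=|1-a|\operatorname{diam}(T)$; since $T$ is rooted, $d(x,y)\leqslant H(x)+H(y)\leqslant 2\H(T)$, hence $\frac{1}{2}\dis(\mathcal{R})\leqslant|a-1|\,\H(T)$. The two marginals of $m$ are both equal to $\mu$, so $d_{\mathrm{TV}}(m\circ p^{-1},\mu)=0$ and, with the standard normalization of the total variation distance on finite measures, $d_{\mathrm{TV}}(m\circ {p'}^{-1},b\mu)=d_{\mathrm{TV}}(\mu,b\mu)=\sup_A|\mu(A)-b\mu(A)|=|b-1|\,\mu(T)$, whence $\operatorname{D}(m;\mu,b\mu)=|b-1|\,\mu(T)$.

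Feeding these three bounds into \eqref{eq: definition GHP} yields
\[\ghp(T_1,T_2)\leqslant \frac{1}{2}\dis(\mathcal{R})\vee\operatorname{D}(m;\mu,b\mu)\vee m(\mathcal{R}^c)\leqslant |a-1|\,\H(T)\vee|b-1|\,\mu(T)\leqslant 2|a-1|\,\H(T)+|b-1|\,\mu(T),\]
which is the claim (and in fact gives the slightly sharper constant $1$ in front of $|a-1|\,\H(T)$). I do not expect any genuine obstacle here: the only points needing a little care are the rooted-tree inequality $\operatorname{diam}(T)\leqslant 2\H(T)$, which is where the factor $2$ could a priori arise, and fixing the convention for $d_{\mathrm{TV}}$ on finite (not necessarily probability) measures so that $d_{\mathrm{TV}}(\mu,b\mu)=|b-1|\,\mu(T)$.
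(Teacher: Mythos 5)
Your proof is correct, and since the paper explicitly leaves this lemma to the reader there is no written argument to compare against; the diagonal correspondence with the diagonal push-forward of $\mu$ as the coupling is clearly the intended elementary route. Your bookkeeping is accurate: the distortion of the diagonal is $|1-a|\operatorname{diam}(T)\leqslant 2|1-a|\H(T)$ (via $d(x,y)\leqslant H(x)+H(y)$ in a rooted tree), so the $\tfrac{1}{2}$ in the definition \eqref{eq: definition GHP} gives $\tfrac{1}{2}\dis(\mathcal{R})\leqslant |a-1|\H(T)$; both marginals of $m$ equal $\mu$, so the discrepancy reduces to $d_{\mathrm{TV}}(\mu,b\mu)=|1-b|\mu(T)$, which holds under either convention for total variation since $\mu-b\mu$ has constant sign; and $m(\mathcal{R}^c)=0$. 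Taking the maximum and then bounding by the sum yields the stated inequality, in fact with the sharper constant $1$ in place of $2$ (and in fact with $\max$ in place of $+$), so the lemma's bound is not tight but certainly implied.
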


\section{The stable tree}\label{section: stable tree}
Here, we define the stable tree and recall some of its properties. We refer to \cite{duquesne2005probabilistic} for background. We shall work with the stable tree $\rdtree$ with branching mechanism $\psi(\lambda) = \lambda^\gamma$ where $\gamma\in (1,2]$ under its excursion measure $\n$: more explicitly, using the coding of compact real trees by height functions, one can define a $\sigma$-finite measure $\n$ on $\T$ with the following properties. 
\begin{enumerate}[label=(\roman*)]
	\item \textbf{Mass measure.} $\n$-a.e. the mass measure $\mu$ is supported by the set of leaves $\operatorname{Lf}(\rdtree) \coloneqq \{x \in \rdtree\colon \, \rdtree \setminus \{x\} \text{ is connected}\}$ and the distribution on $(0,\infty)$ of the total mass $\sigma \coloneqq \mu(\rdtree)$ is given by
	\begin{equation*}
	\n[\sigma \in \dd a] = \frac{1}{\gamma\Gamma(1-1/\gamma)}\,\frac{\dd a}{a^{1+1/\gamma}}\cdot
	\end{equation*}
	\item \textbf{Height.} $\n$-a.e. there exists a unique leaf $x^* \in \rdtree$ realizing the height, that is $H(x^*) = \H(\rdtree)$, and the distribution on $(0,\infty)$ of the height $\H \coloneqq \H(\rdtree)$ is given by
	\begin{equation*}
		\n[\H \in \dd a] = (\gamma-1)^{-\gamma/(\gamma-1)}\,\frac{\dd a}{a^{\gamma/(\gamma-1)}}\cdot
	\end{equation*}
\end{enumerate}

We will make extensive use of the scaling property of the stable tree under $\n$. Recall from \eqref{eq: definition R} the definition of $R_\gamma$ and note that if $T$ has total mass $\sigma$ and height $\H$ then $R_\gamma(T,a)$ has total mass $a^{\gamma/(\gamma-1)}\sigma$ and height $a \H$. Furthermore,
it is straightforward to show that for all $x \in T$, $r \in [0,H(x)]$ and $a>0$:
\begin{align}\label{eq: scaling Z}
	\sigma_{ar,x}(R_\gamma(T,a)) &= a^{\gamma/(\gamma-1)} \sigma_{r,x}(T), \nonumber\\
	\H_{ar,x}(R_\gamma(T,a)) &= a\H_{r,x}(T),\nonumber\\
	Z_{\alpha,\beta}^{R_\gamma(T,a)}(x) &= a^{\alpha\gamma/(\gamma-1)+\beta+1} Z_{\alpha,\beta}^T(x).
\end{align}

The scaling property of the stable tree can be written as follows:
\begin{equation}\label{eq: scaling property under N}
	R_\gamma(\rdtree,a) \quad \text{under} \ \n \quad \lawd \quad \rdtree \quad \text{under} \ a^{1/(\gamma-1)} \n,
\end{equation}
see e.g. \cite[Eq.~(40)]{duquesne2017decomposition}. Using this, one can define a regular conditional probability measure $\excm{a} = \n[\bullet |\sigma=a]$ such that $\excm{a}$-a.s. $\sigma =a$ and
\begin{equation*}
	\n[\bullet] = \frac{1}{\gamma\Gamma(1-1/\gamma)}\int_0^\infty \excm{a}[\bullet] \,\frac{\dd a}{a^{1+1/\gamma}}\cdot
\end{equation*}
Informally, $\excm{a}$ can be seen as the distribution of the stable tree $\rdtree$ with total mass $a$.

The next result is a restatement of \cite[Proposition 5.7]{goldschmidt2010behavior} in terms of trees which gives a version of the scaling property for the stable tree conditioned on its total mass. Recall from \eqref{eq: definition norma} the definition of $\norma$.
\begin{lemma}\label{lemma: sigma = 1 ou > 1}
	Let $\rdtree$ be the stable tree with branching mechanism $\psi(\lambda) = \lambda^\gamma$ where $\gamma \in (1,2]$. 
	\begin{enumerate}[label=(\roman*),leftmargin=*]
		\item For every measurable function $F \colon \T \to [0,\infty]$, we have
		\begin{equation*}
			\excm{1}\left[F(\rdtree)\right] =  \Gamma(1-1/\gamma)\n \left[\ind{\sigma >1}F(\norma(\rdtree))\right].
		\end{equation*}
	\item Under $\excm{a}$, the random tree $\rdtree$ is distributed as $R_\gamma(\rdtree,a^{1-1/\gamma})$ under $\excm{1}$ for every $a>0$.
	\end{enumerate}
\end{lemma}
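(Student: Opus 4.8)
The plan is to derive both parts from the scaling property \eqref{eq: scaling property under N} together with the explicit law of $\sigma$ under $\n$ given in property (i) of the stable tree. Recall the scaling property says that $R_\gamma(\rdtree,a)$ under $\n$ is distributed as $\rdtree$ under $a^{1/(\gamma-1)}\n$, equivalently, for every nonnegative measurable $F\colon\T\to[0,\infty]$,
\[
\n\left[F\bigl(R_\gamma(\rdtree,a)\bigr)\right] = a^{1/(\gamma-1)}\,\n\left[F(\rdtree)\right],
\qquad a>0.
\]

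For part (i), I would start from the right-hand side $\n[\ind{\sigma>1}F(\norma(\rdtree))]$ and disintegrate over the total mass $\sigma$ using $\n[\sigma\in\dd a] = \frac{1}{\gamma\Gamma(1-1/\gamma)}\,a^{-1-1/\gamma}\,\dd a$. The subtlety is that $\norma(\rdtree)$ depends on $\rdtree$ through $\mu(\rdtree)=\sigma$, so I would first argue that, conditionally on $\sigma=a$ (i.e.\ under $\excm{a}$), we have $\norma(\rdtree) = R_\gamma(\rdtree,a^{-1+1/\gamma})$ by definition \eqref{eq: definition norma}, and then use the conditional version of the scaling property: under $\excm{a}$, $R_\gamma(\rdtree,a^{-1+1/\gamma})$ is distributed as $R_\gamma(\rdtree,a^{-1+1/\gamma})$ whose total mass is $a\cdot(a^{-1+1/\gamma})^{\gamma/(\gamma-1)} = 1$, hence as $\rdtree$ under $\excm{1}$ — this is exactly what part (ii) will also give, so I would in fact prove (ii) first (see below) and invoke it here. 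Concretely,
\[
\n\left[\ind{\sigma>1}F(\norma(\rdtree))\right]
= \int_1^\infty \excm{a}\left[F(\norma(\rdtree))\right]\frac{\dd a}{\gamma\Gamma(1-1/\gamma)\,a^{1+1/\gamma}}
= \excm{1}[F(\rdtree)]\int_1^\infty \frac{\dd a}{\gamma\Gamma(1-1/\gamma)\,a^{1+1/\gamma}},
\]
and the elementary integral $\int_1^\infty \gamma^{-1}a^{-1-1/\gamma}\,\dd a = 1$ yields the claimed identity after rearranging the constant $\Gamma(1-1/\gamma)$.

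For part (ii), I would work directly from the scaling property. Fix $a>0$ and a bounded measurable $F$. I want to compare the law of $\rdtree$ under $\excm{a}$ with that of $R_\gamma(\rdtree,a^{1-1/\gamma})$ under $\excm{1}$. Using the disintegration $\n = \frac{1}{\gamma\Gamma(1-1/\gamma)}\int_0^\infty \excm{b}\,b^{-1-1/\gamma}\,\dd b$ and the fact that $R_\gamma(\cdot,a^{1-1/\gamma})$ sends a tree of mass $b$ to one of mass $a^{1-1/\gamma}\cdot b^{\gamma/(\gamma-1)}$... wait, more cleanly: apply the scaling identity with the test function $b\mapsto F(\cdot)\ind{\text{mass}\in\dd c}$ to transport the law of $\sigma$, i.e.\ compute $\n[G(\sigma)F(R_\gamma(\rdtree,a^{1-1/\gamma}))]$ in two ways. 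On one hand it equals $a^{1/(\gamma-1)}\,\n[G(a^{-1}\sigma')F(\rdtree)]$ after substituting; matching the disintegrations over the mass variable and using uniqueness of regular conditional distributions forces $\excm{a}[F(\rdtree)] = \excm{1}[F(R_\gamma(\rdtree,a^{1-1/\gamma}))]$. The routine check is that the Jacobian of the mass change of variables $b=a^{-1/(\gamma-1)}c$ against the density $b^{-1-1/\gamma}\,\dd b$ is consistent with the prefactor $a^{1/(\gamma-1)}$ from scaling; this is a short computation.

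The main obstacle I anticipate is purely bookkeeping: keeping straight the two exponents $\gamma/(\gamma-1)$ and $1-1/\gamma = (\gamma-1)/\gamma$ (which are mutual reciprocals) and the constant $\gamma\Gamma(1-1/\gamma)$ through the change of variables, and being careful that the regular conditional probability $\excm{a}$ is only defined up to $\n[\sigma\in\dd a]$-null sets so that the final identities hold for $\n[\sigma\in\dd a]$-a.e.\ $a$, which by continuity in $a$ of both sides (or by choosing the canonical version from \cite{goldschmidt2010behavior}) upgrades to all $a>0$. There is no conceptual difficulty once (ii) is in hand, since (i) then reduces to the one-line integral above.
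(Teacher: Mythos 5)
Your plan is sound, and it is worth noting that the paper itself gives \emph{no} proof of this lemma: it is presented as a restatement of \cite[Proposition 5.7]{goldschmidt2010behavior}, so your self-contained derivation from the scaling property \eqref{eq: scaling property under N} and the disintegration of $\n$ over $\sigma$ is genuinely different in that it replaces a citation by an actual argument. The logical order you choose (prove (ii) first by testing the scaling identity against joint functionals $G(\sigma)F(\rdtree)$, disintegrating both sides over the mass, and matching the conditional laws; then deduce (i) by integrating $\excm{a}[F(\norma(\rdtree))]=\excm{1}[F(\rdtree)]$ against $\n[\sigma\in\dd a]$ over $a>1$, where $\int_1^\infty \gamma^{-1}a^{-1-1/\gamma}\,\dd a=1$) is correct and avoids circularity; the key point, which you identify, is that under $\excm{a}$ one has $\norma(\rdtree)=R_\gamma(\rdtree,a^{-1+1/\gamma})$, and combining with (ii) and the semigroup property $R_\gamma(R_\gamma(T,b),c)=R_\gamma(T,bc)$ gives $\excm{a}[F(\norma(\rdtree))]=\excm{1}[F(\rdtree)]$ — be careful that "the rescaled tree has total mass $1$" alone does not imply it has law $\excm{1}$; it is (ii) that does the work. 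Two minor points of bookkeeping: in your deferred Jacobian check the mass change of variables under $R_\gamma(\cdot,a)$ should be $c=a^{\gamma/(\gamma-1)}b$, i.e.\ $b=a^{-\gamma/(\gamma-1)}c$ (you wrote $b=a^{-1/(\gamma-1)}c$), and indeed $b^{-1-1/\gamma}\,\dd b$ then picks up the factor $a^{\gamma/(\gamma-1)\cdot(1+1/\gamma)}\cdot a^{-\gamma/(\gamma-1)}=a^{1/(\gamma-1)}$ matching the scaling prefactor; and, as you rightly flag, the matching of disintegrations only identifies $\excm{a}$ for $\dd a$-a.e.\ $a$, which is resolved either by taking the (continuous-in-$a$) version furnished by (ii) as the definition of $\excm{a}$ or by the continuity argument via Lemma \ref{lemma: ghp dilatation}. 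With those details filled in, your proof is complete.
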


We shall need Bismut's decomposition of the stable tree on several occasions. This is a decomposition of the tree along the ancestral line of a uniformly chosen leaf. We refer the reader to \cite[Theorem 4.5]{duquesne2005probabilistic} and \cite[Theorem 2.1]{abraham2013forest} for more details. Although in this paper we are only interested in the stable case $\psi(\lambda) = \lambda^\gamma$, we state the next two results in the general Lévy case. Let $\rdtree$ denote a Lévy tree under its excursion measure $\n$ associated with a branching mechanism 
\begin{equation}\label{eq: general branching}
	\psi(\lambda) = a\lambda + b\lambda^2 +\int_0^\infty (\e^{-\lambda r }-1+\lambda r)\, \pi(\dd r)
\end{equation}
	 where $a,b\geqslant 0$ and $\pi$ is a $\sigma$-finite measure on $(0,\infty)$ satisfying $\int_0^\infty (r\wedge r^2) \,\pi(\dd r)< \infty$. We further assume that $\int^\infty \dd \lambda/\psi(\lambda)<\infty$ so that the Lévy tree is compact. Notice that the Brownian case $\gamma =2$ corresponds to $a=0$, $b=1$ and $\pi = 0$ while the non-Brownian stable case $\gamma\in(1,2)$ corresponds to $a=b=0$ and
\begin{equation}\label{eq: Levy measure}
	\pi(\dd r) = \frac{\gamma(\gamma-1)}{\Gamma(2-\gamma)}  \,\frac{ \dd r}{r^{1+\gamma}}\cdot
\end{equation}
We will also need the probability measure $\for{r}$ on $\T$ which is the distribution of the Lévy tree starting from $r>0$ individuals. More precisely, take $\sum_{i\in I} \delta_{\mathcal{T}_i}$ a  Poisson point measure on $\T$ with intensity $r \n$ and define $\for{r}$ as the distribution of the random tree $\mathcal{T}$ obtained by gluing together the trees $\mathcal{T}_i$ at their root. See \cite[Section 2.6]{abraham2013forest} for further details.

Before stating the result, we first introduce some notations. Let $(T,\root,d,\mu)$ be a (class representative of a)compact real tree and let $x \in T$. Denote by $(x_i, \, i \in I_x)$ the branching points of $T$ which lie on the branch $\llbracket \root,x\rrbracket$, that is those points $y\in \llbracket \root,x\rrbracket$ such that $T\setminus \{y\}$ has at least three connected components. For every $i \in I_x$, define the tree grafted on the branch $\llbracket \root,x\rrbracket$ at $x_{i}$ by $T_{i} = \{y \in T\colon x\wedge y = x_{i}\}$. We consider $T_{i}$ as an element of $\T$ in the obvious way. Let $h_{i} = H(x_{i})$ and define a point measure on $[0,\infty) \times \T$ by
\[\M_x^T = \sum_{i \in I_x} \delta_{(h_{i},T_{i})}. \]

We can now state Bismut's decomposition, see \cite[Theorem 4.5]{duquesne2005probabilistic} or \cite[Theorem 2.1]{abraham2013forest}.
\begin{thm}\label{thm: Bismut}
	Let $\rdtree$ be the Lévy tree with a general branching mechanism \eqref{eq: general branching} satisfying the Grey condition $\int^\infty \dd \lambda/\psi(\lambda) <\infty$ under its excursion measure $\n$. For every $\lambda \geqslant 0$ and every nonnegative measurable function $\Phi$ on $[0,\infty)\times \T$, we have
	\begin{equation}
	\n\left[\int_\rdtree \mu(\dd x) \e^{-\lambda H(x) - \langle \M_x^\rdtree, \Phi \rangle}\right] = \int_0^\infty \dd t \,\e^{-(\lambda+a) t} \ex{\e^{-\sum_{0\leqslant s \leqslant t} \Phi(s,\tree_s)}},
	\end{equation}
	where $(\tree_s, \,  0\leqslant s \leqslant t)$ is a Poisson point process with intensity $\m[\dd \rdtree]= 2b\n[\dd \rdtree]+\int_0^\infty r\pi(\dd r) \for{r}(\dd \rdtree)$.
\end{thm}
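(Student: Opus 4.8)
The statement is Bismut's decomposition of the Lévy tree, recorded in the language of the exploration process as \cite[Theorem~4.5]{duquesne2005probabilistic} and, for a single marked point on the spine, in tree language as \cite[Theorem~2.1]{abraham2013forest}; the plan is to recall how it follows from the coding of $\rdtree$ by the height and exploration processes. Let $X$ be the spectrally positive Lévy process with Laplace exponent $\psi$, let $H=(H_t)_{t\ge 0}$ be the associated height process and $\rho=(\rho_t)$, $\eta=(\eta_t)$ the exploration process and its dual, all taken under the Itô excursion measure $\n$ of $X$ reflected at its running infimum; the drift coefficient of $\psi$ in \eqref{eq: general branching} is $a=\psi'(0+)$. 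Under $\n$, the tree $\rdtree$ is the quotient of $[0,\sigma]$ by the pseudo-metric built from $H$, with canonical projection $p$ satisfying $H(p(t))=H_t$ and mass measure $\mu=p_\ast(\mathrm{Leb}_{[0,\sigma]})$; hence
\[
\n\Big[\int_\rdtree\mu(\dd x)\,\e^{-\lambda H(x)-\langle\M_x^\rdtree,\Phi\rangle}\Big]=\n\Big[\int_0^\sigma\dd t\;\e^{-\lambda H_t-\langle\M_{p(t)}^\rdtree,\Phi\rangle}\Big],
\]
and the first step is to express $\M_{p(t)}^\rdtree$ through $(\rho_t,\eta_t)$: the branch points on $\llbracket\root,p(t)\rrbracket$ and the subtrees grafted on them split into the part explored before time $t$, a measurable function of the atoms of $\rho_t$, and the part explored after time $t$, a measurable function of the atoms of $\eta_t$, a branch point at level $u$ receiving a left piece from $\rho_t$ and a right piece from $\eta_t$ which are concatenated at the corresponding ancestor of $p(t)$.

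The core step is the description of $(\rho_t,\eta_t)$ at a \emph{uniform} time under $\n$. Writing $L^h=(L^h_s)_{s\ge 0}$ for the local time of $H$ at level $h$, the occupation-density formula gives $\int_0^\sigma\dd t\,g(t)=\int_0^\infty\dd h\int_0^\sigma g(s)\,\dd_s L^h_s$, and the key analytic inputs are the Markov property of $\rho$ under $\n$, the time-reversal identity $(\rho_{(\sigma-s)-},\eta_{(\sigma-s)-})_{s\ge 0}\law(\eta_s,\rho_s)_{s\ge 0}$ under $\n$, and the normalization $\n[L^h_\sigma]=\e^{-ah}$. Combining these yields that, at a point of level $h$ sampled according to $\dd L^h$, the pair $(\rho_t,\eta_t)$ is the restriction to $[0,h]$ of a pair of \emph{independent} infinitely divisible random measures, whose atomic structure — after concatenating the matched left and right pieces — describes the branch points of $\llbracket\root,p(t)\rrbracket$ and their grafted subtrees as a Poisson point process on $[0,h]\times\T$. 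Collecting the factor $\e^{-\lambda h}$ from the weight and the factor $\e^{-ah}$ from $\n[L^h_\sigma]$, the right-hand side above becomes $\int_0^\infty\dd h\,\e^{-(\lambda+a)h}\,\ex{\e^{-\sum_{0\le s\le h}\Phi(s,\tree_s)}}$, where $(\tree_s)$ is a Poisson point process on $[0,h]\times\T$ with an intensity that remains to be identified.

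To identify that intensity I would argue as follows. The Brownian part $b\lambda^2$ of $\psi$ produces, per unit height along the spine, binary branch points — half of them to the left of the spine and seen by $\rho_t$, half to the right and seen by $\eta_t$ — each carrying an independent subtree of law $\n$, hence a contribution $2b\,\n[\dd\rdtree]$ to the intensity. A jump of $X$ of size $r$ straddled by the spine produces one branch point at which a mass $u$ of the created offspring lies on the left and $r-u$ on the right, with $u$ uniform on $[0,r]$; since the left and right pieces are, conditionally, independent glued forests of laws $\for{u}$ and $\for{r-u}$, their concatenation at the branch point has law $\for{r}$ (the superposition of two independent Poisson point measures on $\T$ with intensities $u\,\n$ and $(r-u)\,\n$ being a Poisson point measure with intensity $r\,\n$), and such branch points occur along the spine with intensity $r\,\pi(\dd r)$ per unit height — the extra factor $r$ being the size-biasing by which, among all the mass created at a given level, the spine enters a uniformly chosen unit of the $r$ units produced by the jump. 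Therefore the intensity is $\m[\dd\rdtree]=2b\,\n[\dd\rdtree]+\int_0^\infty r\,\pi(\dd r)\,\for{r}(\dd\rdtree)$, and renaming the integration variable $h$ as $t$ gives exactly the stated identity.

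The two delicate points are the reading of $\M_{p(t)}^\rdtree$ off $(\rho_t,\eta_t)$ and, most of all, the size-biasing at the jumps: one must verify both that the intensity acquires the factor $r$ and that the whole subtree grafted at a jump-type branch point has law $\for{r}$, rather than splitting into two separate pieces of laws $\for{u}$ and $\for{r-u}$. Both are contained in the explicit form of the entrance law of $\rho$ under $\n$ from \cite{duquesne2005probabilistic}; alternatively one may bypass all of this and simply invoke \cite[Theorem~4.5]{duquesne2005probabilistic}, rephrasing its exploration-process statement in the tree formalism of Section~\ref{section: real trees}.
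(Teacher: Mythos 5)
Your proposal is correct and matches the paper's treatment: the paper gives no proof of this theorem, stating it as a known result with exactly the references you cite (\cite[Theorem~4.5]{duquesne2005probabilistic} and \cite[Theorem~2.1]{abraham2013forest}), and your closing option of simply invoking Duquesne's theorem and translating it into the tree formalism is precisely what the paper does. Your sketch of the underlying argument — occupation density plus the entrance law of $(\rho_t,\eta_t)$, the normalization $\n[L^h_\sigma]=\e^{-ah}$ producing the factor $\e^{-(\lambda+a)t}$, the factor $2b$ from left/right Brownian branch points, and the size-biased intensity $r\,\pi(\dd r)$ with grafted law $\for{r}$ obtained by superposing the left and right Poisson forests — is a faithful outline of the proof in those references, with the genuinely delicate points correctly flagged.
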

\begin{remark}
	Bismut's decomposition states the following: let $\rdtree$ be the Lévy tree under its excursion measure $\n$ and, conditionally on $\rdtree$, let $U$ be a leaf chosen uniformly at random, \emph{i.e.} according to the distribution $\sigma^{-1}\mu$. Then, under $\n[\sigma \bullet]$, the random variable $H(U)$ has “distribution” $\e^{-at}\, \dd t$ on $(0,\infty)$ and, conditionally on $H(U) = t$, the point measure $\M_{U}^\rdtree$ is distributed as $\sum_{s\leqslant t} \delta_{(s,\tree_s)}$. One can make this claim rigorous by introducing the space of compact weighted rooted real trees with an additional marked vertex and considering the semidirect product measure $\n \times \sigma^{-1}\mu$ on it which corresponds to the distribution of the pair $(\rdtree,U)$. Under this measure, the distribution of the random pair $(H(U), \M_U^\rdtree)$ does not depend on the particular choice of representative in the class of $\rdtree$.
\end{remark}
\begin{figure}[h]
	\centering
	\begin{tikzpicture}
		\draw[thick] (0,0) -- (0,2) -- (0,5);
		\draw (-3pt,0)  -- (3pt,0) node[right]{$0$};
		\draw (-3pt,2) -- (3pt,2) node[right]{$t-r$} ;
		\draw (-3pt,5) -- (3pt,5) node[right]{$t$} ;
		\draw (-3pt,4.2) -- (3pt,4.2) node[right]{$s$} ;
		\mybox[dashed,red]{dir={(0,2) to (0,5.5)}, left=50:90:2cm, right=50:90:2cm};
		\node[red] at (1.8,4) {$\Tdown_r$};
		\draw[thick] plot [smooth] coordinates {(0,2.5) (0.3,2.8) (0.4,3)};
		\draw[thick] plot [smooth] coordinates {(0,3) (-0.2,3.2) (-0.4,3.5)};
		\draw[thick] plot [smooth] coordinates {(0,3.2) (0.2,3.3) (0.4,3.6)};
		\draw[thick] plot [smooth] coordinates {(0,3.5) (-0.2,3.8) (-0.4,4)};
		\draw[thick] plot [smooth] coordinates {(0,3.7) (0.3,3.9) (0.5,3.95)};
		\draw[blue,thick] plot [smooth] coordinates {(0,4.2) (-0.2, 4.3) (-0.3,4.5) (-0.5,4.9)};
		\draw[blue,thick] plot [smooth] coordinates {(-0.3,4.5) (-0.2,4.7) (-0.2, 4.85)};
		\draw[blue,thick] plot [smooth] coordinates { (-0.2, 4.3) (-0.3,4.3) (-0.4,4.25)};
		\node[blue] at (-0.7,4.6) {$\tree_s$};
		\draw[thick] plot [smooth] coordinates {(0,1) (0.2,1.2) (0.4,1.6)};
		\draw[thick] plot [smooth] coordinates {(0,0.3) (0.3,0.6) (0.5,1.2)};
		\draw[thick] plot [smooth] coordinates {(0,0.6) (-0.2,0.9) (-0.4,1.3)};
		\draw[thick] plot [smooth] coordinates {(0,1.5) (-0.3,1.7) (-0.5,2.2)};
	\end{tikzpicture}
	\caption{The real tree $\Tdown_r$ obtained by grafting the atoms $\tree_s$ of a Poisson point process on a branch $[t-r,t]$ at height $s$.}
	\label{fig: bismut}
\end{figure}
Let $(\tree_s, 0\leqslant  s \leqslant t)$ be a Poisson point process as in Theorem \ref{thm: Bismut} and denote by
\begin{equation}\label{eq: definition Tdown}
	\Tdown_r \coloneqq [t-r,t] \circledast_{t-r\leqslant s\leqslant t} \left(\tree_s,s\right), \quad \forall 0\leqslant r \leqslant t
\end{equation}
the random real tree obtained by grafting $\tree_s$ on a branch $[t-r,t]$ at height $s$ for every $t-r \leqslant s \leqslant t$ and rooted at $t-r$, see Figure \ref{fig: bismut}. We refer the reader to \cite[Section 2.4]{abraham2013forest} for a precise definition of the grafting procedure. Let
\begin{equation}\label{eq: definition tau and eta}
	\mass_r \coloneqq \mu(\Tdown_r) = \sum_{t-r\leqslant s \leqslant t} \mu(\tree_s) \quad \text{and} \quad
	\height_r \coloneqq \H(\Tdown_r) = \max_{t-r\leqslant s \leqslant t}\left(\H(\tree_s) + s-(t-r)\right)
\end{equation} 
denote its mass and height. Finally, let 
\begin{equation}\label{eq: definition S}
	S_r \coloneqq \sum_{s\leqslant r} \mu(\tree_s).
\end{equation} 
 It is shown in the proof of \cite[Lemma 4.6]{delmas2018}, see Section 8.6 and more precisely (8.20) therein, that in the stable case $\psi(\lambda) =\lambda^\gamma$, both $\tau$ and $S$ are subordinators defined on $[0,t]$ with Laplace exponent 
\begin{equation}\label{eq: laplace exponent}
\phi(\lambda) = \gamma \lambda^{1-1/\gamma}.
\end{equation}
In particular, thanks to \cite[Section 4]{wolfe1975onmoments} or \cite[Eq.~(2.1.8)]{zolotarev1986one}, we have for every $p \in (-\infty, 1-1/\gamma)$, 
\begin{equation}\label{eq: tau finite moment}
	\ex{\mass_1^p} < \infty.
\end{equation}

We now give the following form of Bismut's decomposition which we will use throughout the paper. Denote by $\D$ the space of cadlag functions on $[0,\infty)$ endowed with the Skorokhod $J1$ topology. For every measurable function $F \colon [0,\infty)^3 \times \T\times \D^2\to [0,\infty]$, we have 
\begin{multline}\label{eq: Bismut}
	\n\left[\int_\rdtree \mu(\dd x)F\left(H(x), \sigma,\H, \rdtree, \left(\sigma_{H(x)-r,x},\, 0\leqslant r \leqslant H(x)\right),\left(\H_{H(x)-r,x},\, 0\leqslant r \leqslant H(x)\right)\right)\right]\\
	\begin{aligned}[b]
	&= \int_0^\infty \dd t \ex{F\left(t,\mass_t,\height_t,\Tdown_t,\left(\mass_r, \, 0\leqslant r \leqslant t\right),\left(\height_r, \, 0\leqslant r \leqslant t\right)\right)}.
	\end{aligned}
\end{multline}
Notice that by definition $\mass_t = S_t$ and $S_{r-}=\mass_t - \mass_{t-r}$ for every $r\in[0,t]$. This will be used implicitly in the sequel. In particular, the following computation will be useful
\begin{equation}\label{eq: tau is integrable}
	\int_0^\infty \ex{\frac{1}{S_t}\ind{S_t >1}}\, \dd t= \int_0^\infty \ex{\frac{1}{\mass_t}\ind{\mass_t >1}} \, \dd t =\n\left[\sigma >1\right]  = \frac{1}{\Gamma(1-1/\gamma)},
\end{equation}
where in the last equality we used Lemma \ref{lemma: sigma = 1 ou > 1}-(i) with $F\equiv 1$.

As a first application of Bismut's decomposition, we give a decomposition of the stable tree into $n+1$ subtrees which generalizes \cite[Lemma 6.1]{abraham2020global}.
\begin{thm}\label{thm: decomposition n+1}
	Let $\rdtree$ be the Lévy tree with a general branching mechanism \eqref{eq: general branching} under its excursion measure $\n$. Then for every $n\geqslant 1$ and every nonnegative measurable functions $f_i, \ 1\leqslant i \leqslant n+1$ defined on $[0,\infty)\times\T$, we have with $r_0 = 0$ and $r_{n+1} = H(x)$
	\begin{equation}
		\n\left[\int_\rdtree \mu(\dd x) \int_{0< r_1< \ldots < r_n < H(x)} \prod_{i=1}^{n+1} f_i \left(r_i-r_{i-1},\treeint{r_{i-1}}{r_{i}}{x}\right) \,\prod_{i=1}^n \dd r_i\right]=\prod_{i=1}^{n+1} \n\left[\int_\rdtree \mu(\dd x) f_i(H(x),\rdtree)\right].
	\end{equation}
\end{thm}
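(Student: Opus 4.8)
The plan is to deduce the identity from Bismut's decomposition (Theorem~\ref{thm: Bismut}), the point being that slicing the ancestral line $\llbracket\root,x\rrbracket$ at the levels $r_1<\ldots<r_n$ cuts the Poissonian family of subtrees grafted along the spine into pairwise disjoint height-intervals, so that the slices are independent and the product on the right-hand side appears after an elementary change of variables. I would first pass to the space of compact real trees carrying a distinguished vertex $x$ besides the root, equipped with the $\sigma$-finite measure $\n(\dd\rdtree)\,\mu(\dd x)$, as in the Remark following Theorem~\ref{thm: Bismut}. Restricted to $\{\epsilon<H(x)<M\}$ this measure is finite, so its Laplace functional — provided by Theorem~\ref{thm: Bismut} — determines it there; a monotone-class argument and the limits $\epsilon\downarrow0$, $M\uparrow\infty$ then upgrade \eqref{eq: Bismut}, read on the point measure $\M_x^\rdtree$ rather than on $\rdtree$, to: for every nonnegative measurable $\Psi$ on $[0,\infty)\times\M_p([0,\infty)\times\T)$,
\begin{equation*}
	\n\!\left[\int_\rdtree \mu(\dd x)\,\Psi\big(H(x),\M_x^\rdtree\big)\right]
	= \int_0^\infty \dd t \; \ex{\Psi\Big(t,\textstyle\sum_{s\leqslant t}\delta_{(s,\tree_s)}\Big)},
\end{equation*}
with $(\tree_s,\,0\leqslant s\leqslant t)$ a Poisson point measure of intensity $\m$.

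Next I would record the elementary fact that, for $0\leqslant r<r'\leqslant H(x)$, the tree $\treeint{r}{r'}{x}$ of \eqref{eq: definiton Tint}, rooted at $x_r$, is obtained by grafting onto a segment of length $r'-r$, at height $h_i-r$, each atom $(h_i,T_i)$ of $\M_x^\rdtree$ with $r\leqslant h_i<r'$ — compare \eqref{eq: definition Tdown}. Hence $\treeint{r}{r'}{x}$ is a measurable function of $r'-r$ and of the restriction of $\M_x^\rdtree$ to $[r,r')\times\T$, shifted by $-r$ in the first coordinate. Taking $(r,r')=(r_{i-1},r_i)$ for $1\leqslant i\leqslant n+1$, with $r_0=0$ and $r_{n+1}=H(x)$, the slices $\treeint{r_{i-1}}{r_i}{x}$ are thus functions of the restrictions of $\M_x^\rdtree$ to the \emph{disjoint} strips $[r_{i-1},r_i)\times\T$; under the Poissonian description above (with $t=H(x)$), those restrictions are, conditionally on $H(x)=t$ and on $r_1<\ldots<r_n$, independent, the $i$-th one shifted by $-r_{i-1}$ having intensity $\dd s\otimes\m$ on $[0,r_i-r_{i-1})\times\T$ — so $\treeint{r_{i-1}}{r_i}{x}$ has the law of $\Tdown_{r_i-r_{i-1}}$, the $n+1$ slices being independent.

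Applying the displayed identity to the functional $\Psi$ that integrates $\prod_{i=1}^{n+1}f_i(r_i-r_{i-1},\cdot)$ over $0<r_1<\ldots<r_n<t$ and invoking Tonelli, the left-hand side of the theorem equals
\begin{equation*}
	\int_0^\infty \dd t \int_{0<r_1<\ldots<r_n<t} \prod_{i=1}^{n+1} g_i(r_i-r_{i-1})\,\prod_{i=1}^n \dd r_i ,
	\qquad g_i(\ell):=\ex{f_i\big(\ell,\Tdown_\ell\big)}
\end{equation*}
(the law of $\Tdown_\ell$ depending only on $\ell$). Since $(r_1,\ldots,r_n,t)\mapsto(u_1,\ldots,u_{n+1})$ with $u_i:=r_i-r_{i-1}$ is a measure-preserving bijection from $\{0<r_1<\ldots<r_n<t\}$ onto $(0,\infty)^{n+1}$, the integral factorizes as $\prod_{i=1}^{n+1}\int_0^\infty g_i(u)\,\dd u$; and $\int_0^\infty g_i(u)\,\dd u=\int_0^\infty\ex{f_i(u,\Tdown_u)}\,\dd u=\n\big[\int_\rdtree\mu(\dd x)\,f_i(H(x),\rdtree)\big]$ by \eqref{eq: Bismut} applied to a functional depending only on $H(x)$ and $\rdtree$. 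This gives the asserted identity.

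I expect the genuinely delicate part to be bookkeeping rather than conceptual: justifying the Poissonian description of $\M_x^\rdtree$ under the infinite measure $\n(\dd\rdtree)\mu(\dd x)$ via the truncation in $H(x)$, and checking that $\treeint{r}{r'}{x}$, as an element of $\T$, is genuinely the grafting of the corresponding strip of $\M_x^\rdtree$ — it is this identification that turns the disjointness of the strips into independence of the slices. (For a general branching mechanism one should also carry a factor $\e^{-at}$ in the displayed identity, which cancels in the change of variables. The statement could equally be obtained by induction on $n$, splitting off $\treeint{0}{r_1}{x}$ and using the case $n=1$, i.e.\ \cite[Lemma~6.1]{abraham2020global}; the argument above is its one-shot form.)
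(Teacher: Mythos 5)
Your proposal is correct and follows essentially the same route as the paper's proof: apply Bismut's decomposition (Theorem \ref{thm: Bismut}) to pass to the Poissonian spine picture, observe that the slices $\treeint{r_{i-1}}{r_i}{x}$ depend on disjoint strips of $\M_x^\rdtree$ and are therefore independent with each $\treeint{r_{i-1}}{r_i}{x}\lawd \Tdown_{r_i-r_{i-1}}$, then perform the change of variables $u_i=r_i-r_{i-1}$ (a measure-preserving bijection from the simplex onto $(0,\infty)^{n+1}$, with the $\e^{-at}$ factor splitting as $\prod\e^{-au_i}$) to factor the integral; this is exactly the paper's argument. Your remarks about carrying the $\e^{-at}$ factor for a general branching mechanism and the monotone-class upgrade of the Laplace-functional version of Bismut are the right bookkeeping points and match what the paper leaves implicit.
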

\begin{proof}
	Recall from \eqref{eq: definition Tdown} the definition of $\Tdown$. By Theorem \ref{thm: Bismut}, we have
	\begin{multline*}
		\n\left[\int_\rdtree \mu(\dd x) \int_{0< r_1< \ldots < r_n < H(x)} \prod_{i=1}^{n+1} f_i \left(r_i-r_{i-1},\treeint{r_{i-1}}{r_{i}}{x}\right) \, \prod_{i=1}^n \dd r_i\right]\\
		=\int_0^\infty \dd r_{n+1}\,\e^{-ar_{n+1}}\ex{\int_{0< r_1< \ldots < r_n < r_{n+1}}\prod_{i=1}^{n+1} f_i\left(r_i-r_{i-1},\Tint{r_{i-1}}{r_{i}}\right) \, \prod_{i=1}^n \dd r_i},
	\end{multline*}
where we set $\Tint{r}{r'} = (\Tdown_{t-r} \setminus \Tdown_{t-r'})\cup \{t-r'\}$ for every $0< r < r' < t$. Since $(\tree_s, \, 0\leqslant s \leqslant t)$ is a Poisson point process, we get that the $\Tint{r_{i-1}}{r_{i}}$ are independent and distributed as $\Tint{0}{r_{i}-r_{i-1}}$. We deduce that
\begin{multline*}
		\n\left[\int_\rdtree \mu(\dd x) \int_{0< r_1< \ldots < r_n < H(x)} \prod_{i=1}^{n+1} f_i \left(r_i-r_{i-1},\treeint{r_{i-1}}{r_{i}}{x}\right) \, \prod_{i=1}^n \dd r_i\right]\\
		\begin{aligned}
			&= \int_{0< r_1< \ldots < r_n < r_{n+1}} \prod_{i=1}^{n+1} \e^{-a (r_i - r_{i-1})} \ex{f_i\left(r_i-r_{i-1},\Tint{0}{r_i-r_{i-1}}\right)}\, \dd r_i \\
			&=\int_{[0,\infty)^{n+1}} \prod_{i=1}^{n+1}\e^{-a s_i} \ex{f_i\left(s_i,\Tint{0}{s_i}\right)}\, \dd s_i\\
			&=\prod_{i=1}^{n+1} \n\left[\int_\rdtree \mu(\dd x) f_i(H(x),\rdtree)\right],
		\end{aligned}
\end{multline*}
where we made the change of variables $(s_1,s_2,\ldots,s_{n+1}) = (r_1,r_2-r_1,\ldots, r_{n+1}-r_n)$ for the second equality and used Bismut's decomposition \eqref{eq: Bismut} together with the fact that $\Tint{0}{t} = \Tdown_t$ $\mathbb{P}$-a.s. for the last.
\end{proof}

From now on, we restrict ourselves to the stable case $\psi(\lambda) = \lambda^\gamma$ where $\gamma \in (1,2]$. For functions $f,g$ defined on $(0,\infty)$, we denote by $f\ast g$ their convolution defined by
\begin{equation*}
	f\ast g (t) = \int_0^t f(s) g(t-s) \, \dd s, \quad \forall t >0.
\end{equation*}
\begin{proposition}\label{prop: n+1 stable}
	Let $\rdtree$ be the stable tree with branching mechanism $\psi(\lambda) = \lambda^{\gamma}$ where $\gamma \in (1,2]$. For every $n \geqslant 1$ and every nonnegative measurable functions $f_i, \ 1\leqslant i \leqslant n+1$ defined on $[0,\infty)\times\T$, we have with $r_0 = 0$ and $r_{n+1} = H(x)$
	\begin{multline}\label{eq: n+1 general}
		\excm{1}\left[\int_\rdtree \mu(\dd x) \int_{0< r_1< \ldots < r_n < H(x)}\prod_{i=1}^{n+1} f_i\left(r_i-r_{i-1},\treeint{r_{i-1}}{r_{i}}{x}\right)\, \prod_{i=1}^n \dd r_i\right] \\
		= \frac{1}{\gamma^n \Gamma(1-1/\gamma)^n} F_1\ast \cdots \ast F_{n+1}(1),
	\end{multline}
where $R_\gamma$ is defined in \eqref{eq: definition R} and
\begin{equation*}
	F_i(a) = a^{-1/\gamma} \excm{1}\left[\int_\rdtree \mu(\dd x) f_i \left(a^{1-1/\gamma} H(x), R_\gamma\left(\rdtree,a^{1-1/\gamma}\right)\right)\right], \quad \forall a >0.
\end{equation*}
In particular, for every $n \geqslant 1$ and every nonnegative measurable functions $g_i, \ 1\leqslant i \leqslant n+1$ defined on $[0,\infty)\times[0,1]$, we have
\begin{multline}\label{eq: n+1 mass}
		\excm{1}\left[\int_\rdtree \mu(\dd x) \int_{0< r_1< \ldots < r_n < H(x)}\prod_{i=1}^{n+1} g_i\left(r_i-r_{i-1},\sigma_{r_{i-1},x}-\sigma_{r_i,x}\right)\, \prod_{i=1}^n \dd r_i\right] \\
		= \frac{1}{\gamma^n \Gamma(1-1/\gamma)^n} G_1\ast \cdots \ast G_{n+1}(1),
\end{multline}
where
\begin{equation*}
	G_i(a) = a^{-1/\gamma} \excm{1}\left[\int_\rdtree \mu(\dd x) g_i\left(a^{1-1/\gamma}H(x),a\right)\right], \quad \forall a >0.
\end{equation*}
\end{proposition}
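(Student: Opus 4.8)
The plan is to derive \eqref{eq: n+1 general} from the excursion-measure identity Theorem~\ref{thm: decomposition n+1} and to transfer it to $\excm{1}$ by means of the mass disintegration $\n[\,\bullet\,]=\frac{1}{\gamma\Gamma(1-1/\gamma)}\int_0^\infty\excm{a}[\,\bullet\,]\,a^{-1-1/\gamma}\,\dd a$ together with the scaling relation Lemma~\ref{lemma: sigma = 1 ou > 1}(ii). The mechanism is that an exponential weight in the total mass factorizes over the $n+1$ subtrees of the decomposition, and that such a weight, integrated against the mass distribution of $\n$, produces exactly a convolution. Concretely, fix $\theta\geqslant0$ and apply Theorem~\ref{thm: decomposition n+1} to the functions $(r,T')\mapsto\e^{-\theta\mu(T')}f_i(r,T')$. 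Since the subtrees $\treeint{r_{i-1}}{r_i}{x}$, $1\leqslant i\leqslant n+1$, cover $\rdtree$ with pairwise intersections reduced to single points, which carry no mass $\n$-a.e., their masses sum to $\sigma$; hence $\prod_{i=1}^{n+1}\e^{-\theta\mu(\treeint{r_{i-1}}{r_i}{x})}=\e^{-\theta\sigma}$ and Theorem~\ref{thm: decomposition n+1} yields
\begin{equation*}
\n\left[\e^{-\theta\sigma}\int_\rdtree\mu(\dd x)\int_{0<r_1<\ldots<r_n<H(x)}\prod_{i=1}^{n+1}f_i(r_i-r_{i-1},\treeint{r_{i-1}}{r_i}{x})\,\prod_{i=1}^n\dd r_i\right]=\prod_{i=1}^{n+1}\n\left[\int_\rdtree\mu(\dd x)\,\e^{-\theta\sigma}f_i(H(x),\rdtree)\right].
\end{equation*}

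I would then compute both sides through the disintegration of $\n$. For the right-hand side, $\sigma=a$ under $\excm{a}$ and Lemma~\ref{lemma: sigma = 1 ou > 1}(ii) gives $\excm{a}[\int_\rdtree\mu(\dd x)\phi(H(x),\rdtree)]=a\,\excm{1}[\int_\rdtree\mu(\dd x)\phi(a^{1-1/\gamma}H(x),R_\gamma(\rdtree,a^{1-1/\gamma}))]$ for every nonnegative measurable $\phi$, so the $i$-th factor equals $\frac{1}{\gamma\Gamma(1-1/\gamma)}\int_0^\infty\e^{-\theta a}F_i(a)\,\dd a$, that is $\frac{1}{\gamma\Gamma(1-1/\gamma)}$ times the Laplace transform $\widehat{F_i}$ of $F_i$. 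For the left-hand side, the disintegration (using again $\sigma=a$ under $\excm{a}$) gives directly $\frac{1}{\gamma\Gamma(1-1/\gamma)}\int_0^\infty\e^{-\theta a}\,a^{-1-1/\gamma}J(a)\,\dd a$, where $J(a)$ is the left-hand side of \eqref{eq: n+1 general} with $\excm{1}$ replaced by $\excm{a}$. Since $\widehat{F_1\ast\cdots\ast F_{n+1}}=\prod_i\widehat{F_i}$, the identity above becomes, for every $\theta\geqslant0$,
\begin{equation*}
\int_0^\infty\e^{-\theta a}\,a^{-1-1/\gamma}J(a)\,\dd a=\frac{1}{(\gamma\Gamma(1-1/\gamma))^{n}}\int_0^\infty\e^{-\theta a}\,(F_1\ast\cdots\ast F_{n+1})(a)\,\dd a,
\end{equation*}
and injectivity of the Laplace transform yields $J(a)=\frac{a^{1+1/\gamma}}{(\gamma\Gamma(1-1/\gamma))^{n}}(F_1\ast\cdots\ast F_{n+1})(a)$ for Lebesgue-a.e.\ $a>0$ (if a side is infinite, truncate the $f_i$ and pass to the limit by monotone convergence).

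It remains to read this off at $a=1$. Both sides of \eqref{eq: n+1 general} are multilinear and monotone in $(f_1,\dots,f_{n+1})$, so by a functional monotone class argument, followed by monotone convergence, it suffices to treat bounded continuous $f_i$. For such $f_i$, writing $J(a)=\excm{1}[\,\cdot\,]$ with $\rdtree$ replaced by $R_\gamma(\rdtree,a^{1-1/\gamma})$ via Lemma~\ref{lemma: sigma = 1 ou > 1}(ii), using the $\ghp$-continuity of $a\mapsto R_\gamma(\rdtree,a^{1-1/\gamma})$ (Lemma~\ref{lemma: ghp dilatation}), the compatibility \eqref{eq: scaling Z} of the subtree operations with $R_\gamma$, the change of variables $r_i\mapsto a^{1-1/\gamma}r_i$, and dominated convergence (the inner simplex has volume $H(x)^n/n!\leqslant\H^n/n!$ and $\excm{1}[\H^n]<\infty$), one checks that $a\mapsto J(a)$ is continuous on $(0,\infty)$; moreover $F_i(a)\leqslant\norm{f_i}_\infty\,a^{-1/\gamma}$, which lies in $L^1_{\mathrm{loc}}[0,\infty)$ since $1/\gamma<1$, so $F_1\ast\cdots\ast F_{n+1}$ is continuous. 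Two continuous functions agreeing Lebesgue-a.e.\ agree everywhere, which gives \eqref{eq: n+1 general} at $a=1$. The identity \eqref{eq: n+1 mass} is then the special case $f_i(r,T')=g_i(r,\mu(T'))$: indeed $\mu(R_\gamma(\rdtree,a^{1-1/\gamma}))=a$ under $\excm{1}$, so $F_i=G_i$, while $\mu(\treeint{r_{i-1}}{r_i}{x})=\sigma_{r_{i-1},x}-\sigma_{r_i,x}$.

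The only genuinely delicate point is the passage from the almost-everywhere-in-$a$ identity to its value at the single point $a=1$ in the last step; everything else is routine bookkeeping with the scaling property of the stable tree and the Laplace transform.
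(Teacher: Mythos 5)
Your proof follows essentially the same route as the paper: apply Theorem~\ref{thm: decomposition n+1} with a multiplicative weight $\e^{-\theta\sigma}$, observe that the subtree masses add up to $\sigma$ so the weight factorizes over the decomposition, disintegrate both sides with respect to total mass via the scaling identity of Lemma~\ref{lemma: sigma = 1 ou > 1}(ii), invoke injectivity of the Laplace transform to obtain the identity at Lebesgue-a.e.\ $a$, then upgrade to all $a$ (in particular $a=1$) by a continuity argument, and finally extend to measurable $f_i$ by a monotone class argument. The bookkeeping (the $a$-exponents, the convolution, the final normalization $\gamma^{-n}\Gamma(1-1/\gamma)^{-n}$) all matches.

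One step is stated loosely: you write that $F_i(a)\leqslant\norm{f_i}_\infty a^{-1/\gamma}\in L^1_{\mathrm{loc}}$ and conclude directly that $F_1\ast\cdots\ast F_{n+1}$ is continuous. Local integrability alone does not make a convolution of $n+1$ functions continuous; you need the $F_i$ themselves to be continuous. But this is exactly what the dominated-convergence-plus-$\ghp$-continuity argument you already invoked for $J$ also gives when applied to $F_i$ (as the paper does), after which continuity of the convolution follows by splitting the convolution integral away from the endpoint singularities and again applying dominated convergence. So this is a gap in the phrasing rather than in the method, and the tools you cite suffice to close it.
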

\begin{proof}
	Let $f_i \colon [0,\infty)\times\T \to \real$ be continuous and bounded for $1\leqslant i\leqslant n+1$. By Theorem \ref{thm: decomposition n+1}, we have for $\lambda >0$
	\begin{multline}\label{eq: LT equality}
			\prod_{i=1}^{n+1} \n\left[\e^{-\lambda \sigma}\int_\rdtree \mu(\dd x) f_i(H(x),\rdtree)\right]\\
			\begin{aligned}[b]
			&=\n\left[\int_\rdtree \mu( \dd x) \int_{0< r_1< \ldots < r_n < H(x)} \prod_{i=1}^{n+1} \e^{-\lambda\mu\left(\treeint{r_{i-1}}{r_{i}}{x}\right)}f_i \left(r_i - r_{i-1},\treeint{r_{i-1}}{r_{i}}{x}\right) \,\prod_{i=1}^n \dd r_i\right]\\
			&= \n\left[\e^{-\lambda \sigma}\int_\rdtree \mu(\dd x) \int_{0< r_1< \ldots < r_n < H(x)} \prod_{i=1}^{n+1} f_i \left(r_i - r_{i-1},\treeint{r_{i-1}}{r_{i}}{x}\right) \,\prod_{i=1}^n \dd r_i\right].
			\end{aligned}
	\end{multline}

Disintegrating with respect to $\sigma$ and using the scaling property from Lemma \ref{lemma: sigma = 1 ou > 1}-(ii), we have
\begin{align}\label{eq: step 1}
	\n\left[\e^{-\lambda \sigma}\int_\rdtree \mu(\dd x) f_i(H(x),\rdtree)\right] &= \frac{1}{\gamma\Gamma(1-1/\gamma)} \int_0^\infty  \e^{-\lambda a} \excm{a} \left[\int_\rdtree \mu(\dd x)f_i(H(x),\rdtree)\right]\, \frac{\dd a}{a^{1+1/\gamma}} \nonumber\\
	&= \frac{1}{\gamma\Gamma(1-1/\gamma)} \mathcal{L}F_i(\lambda),
\end{align}
where $\mathcal{L}$ denotes the Laplace transform on $[0,\infty)$.

On the other hand, again disintegrating with respect to $\sigma$, we have
\begin{multline}\label{eq: step 2}
	\gamma \Gamma(1-1/\gamma)\n\left[\e^{-\lambda \sigma}\int_\rdtree \mu(\dd x) \int_{0< r_1< \ldots < r_n < H(x)} \prod_{i=1}^{n+1} f_i \left(r_i-r_{i-1},\treeint{r_{i-1}}{r_{i}}{x}\right) \,\prod_{i=1}^n \dd r_i\right] \\
	\begin{aligned}[b]
		&=\int_0^\infty \, \frac{\dd a}{a^{1+1/\gamma}}\e^{-\lambda a}\excm{a}\left[\int_\rdtree \mu(\dd x) \int_{0< r_1< \ldots < r_n < H(x)} \prod_{i=1}^{n+1} f_i \left(r_i-r_{i-1},\treeint{r_{i-1}}{r_{i}}{x}\right) \,\prod_{i=1}^n \dd r_i\right]\\
		&=\int_0^\infty \dd a \, a^{(n+1)(1-1/\gamma)-1}\e^{-\lambda a} F(a),
	\end{aligned}
\end{multline}
where we set
\begin{equation*}
	F(a) = \excm{1}\left[\int_\rdtree \mu(\dd x) \int_{0< r_1< \ldots < r_n < H(x)} \prod_{i=1}^{n+1} f_i\left(a^{1-1/\gamma}(r_i- r_{i-1}),R_\gamma\left(\rdtree_{[r_{i-1},r_i),x},a^{1-1/\gamma}\right)\right) \,\prod_{i=1}^n \dd r_i\right].
\end{equation*}

Putting together \cref{eq: LT equality,eq: step 1,eq: step 2} yields
\begin{align*}
	\frac{1}{\gamma^n\Gamma(1-1/\gamma)^n}\mathcal{L}(F_1\ast \ldots \ast F_{n+1})(\lambda) &= \frac{1}{\gamma^n \Gamma(1-1/\gamma)^n}\prod_{i=1}^{n+1}\mathcal{L}F_i(\lambda)\\
	&= \int_0^\infty \dd a \, a^{(n+1)(1-1/\gamma)-1}\e^{-\lambda a} F(a).
\end{align*}
Since this holds for every $\lambda >0$, we deduce that $\dd a$-a.e. on $(0,\infty)$,
\begin{equation}\label{eq: ae equality}
	\frac{1}{\gamma^n\Gamma(1-1/\gamma)^n}F_1\ast \ldots \ast F_{n+1}(a)
	= a^{(n+1)(1-1/\gamma)-1}F(a).
\end{equation}

Thanks to Lemma \ref{lemma: ghp dilatation}, the mapping $a \mapsto R_\gamma(T,a^{1-1/\gamma})$ is continuous on $(0,\infty)$ for every $T\in \T$. We deduce from the dominated convergence theorem that the $F_i$ are continuous on $(0,\infty)$ and thus $F_1\ast \ldots \ast F_{n+1}$ too. Similarly, the right-hand side of \eqref{eq: ae equality} is continuous with respect to $a$. Therefore the equality holds for every $a \in(0,\infty)$.  In particular, taking $a=1$ proves \eqref{eq: n+1 general} for continuous bounded functions $f_i \colon [0,\infty)\times\T \to \real$. This extends to measurable functions $f_i \colon [0,\infty)\times\T \to \real$ thanks to the monotone class theorem. Finally, \eqref{eq: n+1 mass} is a direct consequence of \eqref{eq: n+1 general}.
\end{proof}

In particular, the following corollary will be useful.
\begin{corollary}\label{lemma: second moment}
We have
\begin{equation}\label{eq: bounded in L2}
	\sup_{\alpha \geqslant 0} \alpha^{2-2/\gamma} \excm{1}\left[\int_\rdtree \mu(\dd x)\left(\int_0^{H(x)}\sigma_{r,x}^\alpha \, \dd r\right)^2\right] < \infty.
\end{equation}
\end{corollary}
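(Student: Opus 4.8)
The plan is to expand the square, reduce to a first-moment computation covered by Proposition~\ref{prop: n+1 stable} with $n=2$, and conclude with Stirling's formula. First I would write, for $x\in\rdtree$,
\[
\left(\int_0^{H(x)}\sigma_{r,x}^\alpha\,\dd r\right)^2 = 2\int_{0<r_1<r_2<H(x)}\sigma_{r_1,x}^\alpha\,\sigma_{r_2,x}^\alpha\,\dd r_1\,\dd r_2,
\]
and, since $r\mapsto\sigma_{r,x}$ is non-increasing with $\sigma_{0,x}=\sigma=1$ under $\excm{1}$, bound the factor $\sigma_{r_1,x}^\alpha$ by $1$. This leaves
\[
\excm{1}\!\left[\int_\rdtree\mu(\dd x)\left(\int_0^{H(x)}\sigma_{r,x}^\alpha\,\dd r\right)^2\right]\leqslant 2\,\excm{1}\!\left[\int_\rdtree\mu(\dd x)\int_{0<r_1<r_2<H(x)}\sigma_{r_2,x}^\alpha\,\dd r_1\,\dd r_2\right].
\]
It is essential here to keep the factor $\sigma_{r_2,x}^\alpha$, associated with the \emph{larger} level $r_2$: this is the factor carrying genuine decay in $\alpha$ (the subtree above $r_2$ typically has mass much smaller than $1$), whereas bounding $\sigma_{r_2,x}^\alpha$ by $1$ and keeping $\sigma_{r_1,x}^\alpha$ instead would produce an $\alpha$-independent bound, which is too weak.

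Next I would apply~\eqref{eq: n+1 mass} with $n=2$, $g_1\equiv g_2\equiv 1$ and $g_3(r,m)=m^\alpha$; since $r_3=H(x)$ and $\mu$ does not charge the leaf $x$, the corresponding integrand equals $\sigma_{r_2,x}^\alpha$, and using $\excm{1}[\sigma]=1$ the associated functions are $G_1(a)=G_2(a)=a^{-1/\gamma}$ and $G_3(a)=a^{\alpha-1/\gamma}$. Consequently $G_1\ast G_2\ast G_3(1)$ is the Dirichlet integral over the simplex $\{a_1,a_2,a_3>0,\ a_1+a_2+a_3=1\}$ of $a_1^{-1/\gamma}a_2^{-1/\gamma}a_3^{\alpha-1/\gamma}$, which (all exponents being admissible since $\gamma>1$ and $\alpha\geqslant0$) equals $\Gamma(1-1/\gamma)^2\,\Gamma(\alpha+1-1/\gamma)/\Gamma(\alpha+3-3/\gamma)$. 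Combining with the prefactor $1/(\gamma^2\Gamma(1-1/\gamma)^2)$ from~\eqref{eq: n+1 mass}, the quantity in~\eqref{eq: bounded in L2} is bounded above by $(2/\gamma^2)\,\Gamma(\alpha+1-1/\gamma)/\Gamma(\alpha+3-3/\gamma)$.

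Finally I would multiply by $\alpha^{2-2/\gamma}$ and invoke the standard asymptotics $\Gamma(\alpha+p)/\Gamma(\alpha+q)\sim\alpha^{p-q}$ as $\alpha\to\infty$ (here $p=1-1/\gamma$, $q=3-3/\gamma$), giving $\alpha^{2-2/\gamma}\,\Gamma(\alpha+1-1/\gamma)/\Gamma(\alpha+3-3/\gamma)\to1$; since this expression is moreover continuous on $[0,\infty)$ and vanishes at $\alpha=0$ (because $2-2/\gamma>0$), it is bounded on all of $[0,\infty)$, which yields~\eqref{eq: bounded in L2}. There is no serious obstacle; the one point requiring attention is the one already stressed above — which of the two factors $\sigma_{r_i,x}^\alpha$ to discard in the crude step — together with the elementary verification that the Dirichlet integral indeed produces exactly the compensating power $\alpha^{-(2-2/\gamma)}$.
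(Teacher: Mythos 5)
Your proof is correct and follows essentially the same route as the paper: both apply \eqref{eq: n+1 mass} with $n=2$ and conclude via Gamma-function asymptotics. The only difference is that you first bound $\sigma_{r_1,x}^\alpha\leqslant 1$ and evaluate a single Dirichlet integral, whereas the paper keeps both factors (taking $g_1(r,a)=g(1-a)$, $g_2\equiv 1$, $g_3(r,a)=g(a)$ with $g(a)=a^\alpha$) and computes the second moment exactly as a product of two Beta functions; both yield the compensating decay $\alpha^{-(2-2/\gamma)}$, and you correctly identified that the factor to retain is the one at the larger level $r_2$.
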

\begin{proof}
		Applying \eqref{eq: n+1 mass} with $n=2$, $g_1(r,a) = g(1-a)$, $g_2 (a)= 1$ and $g_3(r,a)= g(a)$ yields, for every measurable function $g \colon [0,1] \to [0,\infty]$,
		\begin{multline}\label{eq: second moment}
			\excm{1}\left[\int_\rdtree \mu(\dd x)\left(\int_0^{H(x)} g(\sigma_{r,x})\, \dd r\right)^2 \right] \\
			= \frac{2}{\gamma^2\Gamma(1-1/\gamma)^2}\int_0^1 g(y)(1-y)^{-1/\gamma} \, \dd y \int_0^y g(z) z^{-1/\gamma}(y-z)^{-1/\gamma}\, \dd z.
		\end{multline}
		Taking $g(a) = a^\alpha$, we get
	\begin{multline*}
		\alpha^{2-2/\gamma}\excm{1}\left[\int_\rdtree \mu(\dd x) \left(\int_0^{H(x)} \sigma_{r,x}^\alpha\, \dd r\right)^2\right]\\
		\begin{aligned}[b]
			&= \frac{2\alpha^{2-2/\gamma}}{\gamma^2\Gamma(1-1/\gamma)^2} \int_0^1 y^\alpha (1-y)^{-1/\gamma}\, \dd y \int_0^y z^{\alpha-1/\gamma} (y-z)^{-1/\gamma}\, \dd z\\
			&= \frac{2\alpha^{2-2/\gamma}}{\gamma^2 \Gamma(1-1/\gamma)^2}\mathrm{B}\left(2\alpha+2-2/\gamma,1-1/\gamma\right)\mathrm{B}\left(\alpha+1-1/\gamma,1-1/\gamma\right),
		\end{aligned}
	\end{multline*}
	where $\mathrm{B}$ is the Beta function. Using that $\mathrm{B}(x,1-1/\gamma)\sim \Gamma(1-1/\gamma)x^{-1+1/\gamma}$ as $x\to \infty$, \eqref{eq: bounded in L2} readily follows.
\end{proof}

As a consequence of Proposition \ref{prop: n+1 stable}, we are able to compute the intensity measure of the random measure $\Psi_\rdtree$ appearing in \cite{abraham2020global}, see Proposition 6.3 therein.
\begin{corollary}\label{cor: first moment Psi}
	Let $\rdtree$ be the normalized stable tree with branching mechanism $\psi(\lambda) = \lambda^\gamma$ where $\gamma \in (1,2]$. Let $f$ and $g$ be nonnegative measurable functions defined on $\T$ and $[0,\infty)$ respectively. We have
\begin{multline}\label{eq: first moment Psi}
	 \gamma\Gamma(1-1/\gamma)\excm{1}\left[\int_\rdtree \mu(\dd x)\int_0^{H(x)} f(\rdtree_{r,x})g(r)\, \dd r\right]\\
	 =\int_0^1 \dd a \, a^{-1/\gamma} (1-a)^{-1/\gamma}\excm{1}\left[f\circ R_\gamma \left(\rdtree, a^{1-1/\gamma}\right)\right]
	 \excm{1}\left[\int_\rdtree \mu(\dd x) g\left((1-a)^{1-1/\gamma} H(x)\right)\right].
\end{multline}
\end{corollary}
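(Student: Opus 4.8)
The plan is to obtain the identity as the special case $n=1$ of Proposition~\ref{prop: n+1 stable}.

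First I would apply \eqref{eq: n+1 general} with $n=1$, so that $r_0=0$, $r_2=H(x)$ and there is a single integration variable $r_1\in(0,H(x))$, and I would choose the two test functions on $[0,\infty)\times\T$ to be
\[
	f_1(s,T)=g(s),\qquad f_2(s,T)=f(T).
\]
With this choice $f_1\bigl(r_1-r_0,\treeint{r_0}{r_1}{x}\bigr)=g(r_1)$ and $f_2\bigl(r_2-r_1,\treeint{r_1}{r_2}{x}\bigr)=f\bigl(\treeint{r_1}{H(x)}{x}\bigr)$. Next I would note that, $\mu(\dd x)$-a.e., the tree $\treeint{r_1}{H(x)}{x}$ coincides with $\rdtree_{r_1,x}$: since the mass measure is supported on the leaves of $\rdtree$, for $\mu$-a.e.\ $x$ one has $\rdtree_{H(x),x}=\{x\}$ and $x_{H(x)}=x$, so by \eqref{eq: definiton Tint} $\treeint{r_1}{H(x)}{x}=(\rdtree_{r_1,x}\setminus\{x\})\cup\{x\}=\rdtree_{r_1,x}$. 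Hence the left-hand side of \eqref{eq: n+1 general} equals $\excm{1}\bigl[\int_\rdtree\mu(\dd x)\int_0^{H(x)}g(r)\,f(\rdtree_{r,x})\,\dd r\bigr]$.

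Then I would read off the functions $F_1$ and $F_2$ provided by the proposition, using that $\mu(\rdtree)=1$ under $\excm{1}$:
\[
	F_1(a)=a^{-1/\gamma}\,\excm{1}\!\left[\int_\rdtree\mu(\dd x)\,g\bigl(a^{1-1/\gamma}H(x)\bigr)\right],\qquad
	F_2(a)=a^{-1/\gamma}\,\excm{1}\!\left[f\circ R_\gamma\bigl(\rdtree,a^{1-1/\gamma}\bigr)\right].
\]
The right-hand side of \eqref{eq: n+1 general} is $\tfrac{1}{\gamma\Gamma(1-1/\gamma)}(F_1\ast F_2)(1)$; by commutativity of convolution $(F_1\ast F_2)(1)=\int_0^1 F_2(a)F_1(1-a)\,\dd a$, which is exactly
\[
	\int_0^1 a^{-1/\gamma}(1-a)^{-1/\gamma}\,\excm{1}\!\left[f\circ R_\gamma\bigl(\rdtree,a^{1-1/\gamma}\bigr)\right]\excm{1}\!\left[\int_\rdtree\mu(\dd x)\,g\bigl((1-a)^{1-1/\gamma}H(x)\bigr)\right]\dd a .
\]
Multiplying through by $\gamma\Gamma(1-1/\gamma)$ gives \eqref{eq: first moment Psi}.

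There is no genuine obstacle here: the analytic content (the Laplace-transform identification, the continuity argument, the monotone-class extension to general measurable functions) is already packaged into Proposition~\ref{prop: n+1 stable}. The only two points requiring a word of care are the $\mu$-a.e.\ identification of $\treeint{r}{H(x)}{x}$ with $\rdtree_{r,x}$ via the fact that $\mu$ charges only leaves, and the harmless reindexing $a\leftrightarrow 1-a$ (equivalently, the commutativity of the convolution) needed to display the two marginals in the order of the statement.
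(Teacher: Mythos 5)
Your derivation is correct and is precisely the intended route: the paper states this corollary as an immediate consequence of Proposition~\ref{prop: n+1 stable} without writing out the details, and you have filled them in as expected — take $n=1$, $f_1(s,T)=g(s)$, $f_2(s,T)=f(T)$, identify $\treeint{r}{H(x)}{x}=\rdtree_{r,x}$ for $\mu$-a.e.\ $x$ (using that $\mu$ charges only leaves), simplify $F_2$ via $\mu(\rdtree)=1$ under $\excm{1}$, and reindex the convolution.
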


Another application of Theorem \ref{thm: Bismut} is the following result giving the moments of the height $H(U)$ of a uniformly distributed leaf $U\in \rdtree$ (\emph{i.e.} according to $\mu$) under $\excm{1}$. In particular, this allows to give a nontrivial upper bound for the size of the ball with radius $\epsilon >0$ centered around the root of the normalized stable tree. Let us mention that this result is not new since the distribution of $H(U)$ under $\excm{1}$ is known: in the Brownian case $\gamma =2$, $H$ is distributed as $\sqrt{2}e$ where $e$ is the Brownian excursion so $\sqrt{2}H(U)$ has Rayleigh distribution; in the case $\gamma \in (1,2)$, 
$H(U)$ is distributed as a multiple of the local time at $0$ of the Bessel bridge of dimension $2/\gamma$, see \cite[Corollary 10]{haas2009spinal}.
\begin{lemma}\label{lemma: height integrable}
	Let $\rdtree$ be the normalized stable tree with branching mechanism $\psi(\lambda) = \lambda^\gamma$ where $\gamma \in (1,2]$. For every $p\in(-\infty,2)$, we have
	\begin{equation}\label{eq: height integrable}
		\excm{1}\left[\int_\rdtree H(x)^{-p}\, \mu(\dd x)\right] = \frac{(\gamma-1)\gamma^{p-1}\Gamma(1-1/\gamma)\Gamma(2-p)}{\Gamma(1-(p-1)(1-1/\gamma))}< \infty.
	\end{equation}
\end{lemma}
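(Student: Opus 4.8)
The plan is to pass to the excursion measure $\n$ via Lemma~\ref{lemma: sigma = 1 ou > 1}-(i), carrying along the normalization weight, and then to evaluate the resulting quantity with Bismut's decomposition in the form \eqref{eq: Bismut}. Since all integrands below are nonnegative, every manipulation is licensed by Tonelli, and the finiteness will simply be read off the final closed form. Concretely, I would first apply Lemma~\ref{lemma: sigma = 1 ou > 1}-(i) to $F(\rdtree)=\int_\rdtree H(x)^{-p}\,\mu(\dd x)$. By \eqref{eq: definition R} and \eqref{eq: definition norma}, the tree $\norma(\rdtree)=R_\gamma(\rdtree,\sigma^{-(1-1/\gamma)})$ is obtained from $\rdtree$ by multiplying all heights by $\sigma^{-(1-1/\gamma)}$ and the mass measure by $\sigma^{-1}$ (the latter because the exponent $(-1+1/\gamma)\cdot\gamma/(\gamma-1)$ equals $-1$), so that $F(\norma(\rdtree))=\sigma^{p(1-1/\gamma)-1}\int_\rdtree H(x)^{-p}\,\mu(\dd x)$, and therefore
\[
\excm{1}\!\left[\int_\rdtree H(x)^{-p}\,\mu(\dd x)\right]=\Gamma(1-1/\gamma)\,\n\!\left[\ind{\sigma>1}\,\sigma^{p(1-1/\gamma)-1}\int_\rdtree H(x)^{-p}\,\mu(\dd x)\right].
\]

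Next I would feed the right-hand side into \eqref{eq: Bismut}, choosing the function $F$ there to depend only on its first two coordinates, namely $F(t,m,\ldots)=t^{-p}\ind{m>1}m^{p(1-1/\gamma)-1}$. Since the first argument corresponds to $H(x)$ and the second to $\sigma=\mu(\rdtree)$ (i.e.\ to $t$ and $\mass_t$ on the right-hand side of \eqref{eq: Bismut}), this gives
\[
\n\!\left[\ind{\sigma>1}\,\sigma^{p(1-1/\gamma)-1}\int_\rdtree H(x)^{-p}\,\mu(\dd x)\right]=\int_0^\infty t^{-p}\,\ex{\ind{\mass_t>1}\,\mass_t^{p(1-1/\gamma)-1}}\,\dd t .
\]
Now I would use that $\mass$ is a subordinator with Laplace exponent $\phi(\lambda)=\gamma\lambda^{1-1/\gamma}$ (see \eqref{eq: laplace exponent}), so by scaling $\mass_t\lawd(\gamma t)^{\gamma/(\gamma-1)}W$, where $W$ is a positive $(1-1/\gamma)$-stable variable with $\ex{\e^{-\lambda W}}=\e^{-\lambda^{1-1/\gamma}}$. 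Substituting this, performing the change of variables $u=(\gamma t)^{-(\gamma-1)/\gamma}$ (under which the event $\{\mass_t>1\}$ becomes $\{W>u\}$), and then using Tonelli to integrate $u^{-(1-1/\gamma)}\ind{u<W}$ in $u$ (legitimate because $1-1/\gamma<1$), the integral collapses, after collecting the constants, to $\gamma^{p-1}(\gamma-1)\,\ex{W^{(p-1)(1-1/\gamma)}}$.

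It then remains to invoke the moments of a positive stable law (the references cited around \eqref{eq: tau finite moment} apply): $\ex{W^q}=\Gamma\!\bigl(1-q/(1-1/\gamma)\bigr)/\Gamma(1-q)$ for $q<1-1/\gamma$. Taking $q=(p-1)(1-1/\gamma)$, which satisfies $q<1-1/\gamma$ precisely when $p<2$, yields $\ex{W^{(p-1)(1-1/\gamma)}}=\Gamma(2-p)/\Gamma\!\bigl(1-(p-1)(1-1/\gamma)\bigr)$. Combining the four displays produces the announced identity, and it is finite because for $p<2$ one has $1-(p-1)(1-1/\gamma)>0$, so both Gamma values are finite and positive. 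I do not expect a genuine obstacle here: the only conceptual point is to resist computing directly under $\n$ (that integral diverges) and instead to carry the weight $\ind{\sigma>1}\,\sigma^{p(1-1/\gamma)-1}$ through Bismut's formula; after that, the whole proof is bookkeeping of constants in the change of variables of the second step.
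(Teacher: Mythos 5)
Your proof is correct and reaches the stated identity, but it travels a slightly different route than the paper. The paper starts from the joint identity (for $\lambda>0$)
\[
\n\!\left[\sigma\e^{-\lambda\sigma}\int_\rdtree H(x)^{-p}\,\mu(\dd x)\right]
=\int_0^\infty t^{-p}\,\ex{\mass_t\e^{-\lambda\mass_t}}\,\dd t
=\phi'(\lambda)\int_0^\infty t^{1-p}\e^{-t\phi(\lambda)}\,\dd t,
\]
i.e.\ applies Bismut with the weight $\sigma\e^{-\lambda\sigma}$, whose expectation under the subordinator law is computed directly from $\phi$. It then disintegrates the left side over $\sigma$ and uses the \emph{scaling} part Lemma~\ref{lemma: sigma = 1 ou > 1}-(ii) to pull out $\excm{1}[\int H^{-p}\mu]$; equating and simplifying yields the answer without ever invoking explicit moment formulas for positive stable laws. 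You instead use the \emph{indicator} part Lemma~\ref{lemma: sigma = 1 ou > 1}-(i) to push the problem under $\n$ with the weight $\ind{\sigma>1}\sigma^{p(1-1/\gamma)-1}$, feed it through Bismut, and evaluate the resulting one-dimensional integral by scaling $\mass_t$ to a normalized positive $(1-1/\gamma)$-stable variable and using the Mellin-type formula for its moments. Both approaches hinge on Bismut's decomposition and the $\n$/$\excm{1}$ scaling relation; the paper trades the stable-moment formula for a Laplace-transform identity, while yours is arguably the more pedestrian computation. The threshold $p<2$ appears in either version, in yours as the domain of validity of $\ex{W^{(p-1)(1-1/\gamma)}}$, in the paper as the condition for $\int_0^\infty t^{1-p}\e^{-t\phi(\lambda)}\,\dd t<\infty$.

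One small slip: in the change of variables you write $u=(\gamma t)^{-(\gamma-1)/\gamma}$, but $\{\mass_t>1\}=\{W>(\gamma t)^{-\gamma/(\gamma-1)}\}$, so the correct substitution is $u=(\gamma t)^{-\gamma/(\gamma-1)}$ (reciprocal exponent). The constant $\gamma^{p-1}(\gamma-1)$ and the exponent $(p-1)(1-1/\gamma)$ you report after the dust settles are nonetheless correct, so this is a typo rather than an error in the argument.
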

\begin{proof}
	Using Bismut's decomposition \eqref{eq: Bismut}, we have for every $\lambda > 0$
	\begin{equation*}
		\n\left[\sigma\e^{-\lambda \sigma} \int_\rdtree H(x)^{-p}\, \mu(\dd x)\right] = \int_0^\infty t^{-p}\ex{\mass_t\e^{-\lambda \mass_t}}\, \dd t = \phi'(\lambda)\int_0^\infty t^{1-p}\e^{-t\phi(\lambda)}\, \dd t.
	\end{equation*}
	
	On the other hand, disintegrating with respect to $\sigma$ and using Lemma \ref{lemma: sigma = 1 ou > 1}-(ii), we have
	\begin{align*}
		\n\left[\sigma\e^{-\lambda \sigma} \int_\rdtree H(x)^{-p}\, \mu(\dd x)\right] 
		&= \frac{1}{\gamma \Gamma(1-1/\gamma)} \int_0^\infty a\e^{-\lambda a} \excm{a}\left[\int_\rdtree H(x)^{-p}\, \mu(\dd x)\right]\, \frac{\dd a}{a^{1+1/\gamma}}\\
		&= \frac{1}{\gamma \Gamma(1-1/\gamma)} \int_0^\infty \e^{-\lambda a} \, \frac{\dd a}{a^{(p-1)(1-1/\gamma)}} \excm{1}\left[\int_\rdtree H(x)^{-p}\, \mu(\dd x)\right] \\
		&= \frac{\Gamma(1-(p-1)(1-1/\gamma))}{\gamma \Gamma(1-1/\gamma)\lambda^{1-(p-1)(1-1/\gamma)}} \excm{1}\left[\int_\rdtree H(x)^{-p} \, \mu(\dd x)\right].
	\end{align*}
	Using \eqref{eq: laplace exponent}, it follows that
	\begin{align*}
		\excm{1}\left[\int_\rdtree H(x)^{-p} \, \mu(\dd x)\right] &= \frac{\gamma \Gamma(1-1/\gamma)\lambda^{1-(p-1)(1-1/\gamma)}\phi'(\lambda)}{\Gamma(1-(p-1)(1-1/\gamma))} \int_0^\infty t^{1-p} \e^{-t\phi(\lambda)}\, \dd t \\
		&=\frac{(\gamma-1)\gamma^{p-1}\Gamma(1-1/\gamma)\Gamma(2-p)}{\Gamma(1-(p-1)(1-1/\gamma))}\cdot
	\end{align*}
\end{proof}
\begin{remark}
	Conditionally on $\rdtree$, let $U\in \rdtree$ be a uniformly distributed leaf. Then we can rewrite \eqref{eq: height integrable} as follows:
	\begin{equation}
		\frac{1}{c_\gamma}\excm{1}\left[\frac{1}{H(U)}\left(\gamma H(U)\right)^p\right] = \frac{\Gamma(p+1)}{\Gamma(p(1-1/\gamma)+1)}, \quad \forall p > -1,
	\end{equation}
	where $c_\gamma = (\gamma-1)\Gamma(1-1/\gamma)$.
	This implies that, under the probability measure $c_\gamma^{-1}\excm{1}[H(U)^{-1}\bullet]$, the random variable $\gamma H(U)$ has Mittag-Leffler distribution with index $1-1/\gamma$, see \cite[Eq.~(0.42)]{pitman2006combinatorial}.
\end{remark}

\section{Zooming in at the root of the stable tree}\label{section: zooming}
In this section, we study the shape of the stable tree in a small neighborhood of its root. The main result, Theorem \ref{thm: zooming}, states that after zooming in and rescaling, one sees a branch on which trees are grafted according to a Poisson point process on $\T$ with intensity $\m$ given by
\begin{equation}\label{eq: intensity measure}
	\m[\dd \rdtree] = \begin{cases}
		2 \n[\dd \rdtree] & \text{if } \gamma =2,\\
		\int_0^\infty r \pi(\dd r) \for{r}(\dd \rdtree) & \text{if } \gamma \in (1,2),
	\end{cases}
\end{equation}
where we recall from Section \ref{section: stable tree} that $\pi$ is given by \eqref{eq: Levy measure} and $\for{r}$ is the distribution of the random tree $\rdtree$ obtained by gluing together at their roots a family of trees distributed according to a Poisson point measure with intensity $r\n$. 

We start with the following result giving the scaling property of the stable tree under $\m$.
\begin{lemma}\label{lemma: scaling property under M}
	The following identity holds for every $a>0$
	\begin{equation}\label{eq: scaling property under M}
		R_\gamma(\rdtree,a) \quad \text{under} \ \m \quad \lawd \quad \rdtree \quad \text{under} \ a\m.
	\end{equation}
\end{lemma}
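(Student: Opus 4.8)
The plan is to reduce the statement to the scaling property \eqref{eq: scaling property under N} of $\n$, handling the Brownian case $\gamma=2$ and the stable case $\gamma\in(1,2)$ separately. When $\gamma=2$ there is nothing to do: by \eqref{eq: intensity measure} we have $\m=2\n$, so \eqref{eq: scaling property under N} immediately gives that $R_2(\rdtree,a)$ under $2\n$ is distributed as $\rdtree$ under $2\,a^{1/(2-1)}\,\n=a\,(2\n)=a\,\m$, which is \eqref{eq: scaling property under M}.

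Now assume $\gamma\in(1,2)$, so that $\m[\dd\rdtree]=\int_0^\infty r\,\pi(\dd r)\,\for{r}(\dd\rdtree)$. The first step is to compute the image of $\for{r}$ under the rescaling map $R_\gamma(\cdot,a)$. Recall that under $\for{r}$ the tree $\rdtree$ is obtained by gluing at their roots the atoms of a Poisson point measure $\sum_{i\in I}\delta_{\rdtree_i}$ on $\T$ with intensity $r\,\n$. Since multiplying all distances by $a$ and all masses by $a^{\gamma/(\gamma-1)}$ is performed componentwise on each glued piece, it follows from the definition of the root-gluing procedure (\cite[Sections~2.4 and~2.6]{abraham2013forest}) that $R_\gamma(\cdot,a)$ commutes with it; hence $R_\gamma(\rdtree,a)$ under $\for{r}$ is the law of the tree obtained by gluing at their roots the atoms of $\sum_{i\in I}\delta_{R_\gamma(\rdtree_i,a)}$. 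As $R_\gamma(\cdot,a)$ scales $\ghp$ by at most a fixed constant and is therefore continuous (hence measurable) on $\T$, the mapping theorem for Poisson point measures shows that $\sum_{i\in I}\delta_{R_\gamma(\rdtree_i,a)}$ is a Poisson point measure with intensity $r\,\bigl(\n\circ R_\gamma(\cdot,a)^{-1}\bigr)=r\,a^{1/(\gamma-1)}\,\n$, where the last equality is exactly \eqref{eq: scaling property under N}. Consequently,
\[
R_\gamma(\rdtree,a)\ \text{under}\ \for{r}\quad\lawd\quad\rdtree\ \text{under}\ \for{r\,a^{1/(\gamma-1)}}.
\]

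Finally I would push this identity through the integral defining $\m$ by the change of variables $s=r\,a^{1/(\gamma-1)}$. Using the explicit form \eqref{eq: Levy measure} of $\pi$, a one-line computation gives $\pi(\dd r)=a^{\gamma/(\gamma-1)}\pi(\dd s)$ and $r=a^{-1/(\gamma-1)}s$, whence
\begin{multline*}
	\m\circ R_\gamma(\cdot,a)^{-1}[\dd\rdtree]=\int_0^\infty r\,\pi(\dd r)\,\for{r\,a^{1/(\gamma-1)}}(\dd\rdtree)\\
	=\int_0^\infty a^{-1/(\gamma-1)}s\cdot a^{\gamma/(\gamma-1)}\pi(\dd s)\,\for{s}(\dd\rdtree)=a\,\m[\dd\rdtree],
\end{multline*}
which is \eqref{eq: scaling property under M}. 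The only point requiring a little care is the commutation of $R_\gamma(\cdot,a)$ with the root-gluing operation together with the measurability needed to invoke the mapping theorem; once this is granted, the proof is just the scaling property of $\n$ combined with the above change of variables, so I do not expect a genuine obstacle.
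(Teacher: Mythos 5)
Your argument is correct and takes the same route as the paper: handle $\gamma=2$ trivially via \eqref{eq: scaling property under N} and $\m=2\n$, then for $\gamma\in(1,2)$ deduce from \eqref{eq: scaling property under N} that $R_\gamma(\rdtree,a)$ under $\for{r}$ is distributed as $\rdtree$ under $\for{a^{1/(\gamma-1)}r}$, and finish with the change of variables $s=a^{1/(\gamma-1)}r$ using the explicit form \eqref{eq: Levy measure} of $\pi$. The only difference is that you spell out why the rescaling commutes with root-gluing (via the mapping theorem for Poisson point measures), which the paper states without elaboration; this is a harmless and correct addition.
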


\begin{proof}
	 The case $\gamma =2$ reduces to the scaling property \eqref{eq: scaling property under N} so we only need to prove the case $\gamma \in (1,2)$. Thanks to \eqref{eq: scaling property under N}, we deduce that $R_\gamma(\rdtree,a)$ under $\for{r}$ has distribution $\for{a^{1/(\gamma-1)}r}$. It follows from \eqref{eq: Levy measure} that under $\m$, $R_\gamma(\rdtree,a)$ has distribution
\begin{equation*}
	\int_0^\infty r \pi(\dd r) \for{a^{1/(\gamma-1)}r}(\dd \rdtree) = a\int_0^\infty s \pi(\dd s) \for{s}(\dd \rdtree) = a\m[\dd \rdtree].
\end{equation*}
\end{proof}

Let $(T,\root,d,\mu)$ be a compact real tree and let $x \in T$. Recall from Section \ref{section: stable tree} that $T_i, \, i \in I_x$ are the trees grafted on the branch $\llbracket \root, x\rrbracket$, each one at height $h_i$. Fix $\f\colon (0,\infty)\to (0,\infty)$ and define for every $\epsilon >0$ a point measure on $[0,\infty)^2\times \T$ by
\begin{equation}
	\mathcal{N}_\epsilon^\f(x) = \sum_{h_i \leqslant \f(\epsilon) H(x)} \delta_{\left(\epsilon^{-1} h_i,\epsilon^{-\gamma/(\gamma-1)}\sigma_i, \norma(T_i)\right)}.
\end{equation}
We are now in a position to give the main result of this section. 
\begin{thm}\label{thm: zooming} Let $\rdtree$ be the normalized stable tree with branching mechanism $\psi(\lambda) = \lambda^\gamma$ where $\gamma \in (1,2]$. Conditionally on $\rdtree$, let $U$ be a $\rdtree$-valued random variable with distribution $\mu$ under $\excm{1}$. Let $(\tree_s', \, s\geqslant 0)$ be a Poisson point process with intensity $\m$, independent of $(\rdtree,H(U))$. Let $\Phi \colon [0,\infty)^2\times \T\to [0,\infty)$ be a measurable function such that there exists $C>0$ such that for every $h\geqslant 0$ and $T\in \T$, we have
\begin{equation}\label{eq: hypothesis Phi}
	\left|\Phi(h,b,T)-\Phi(h,a,T)\right|\leqslant C|b-a|.
\end{equation}
\begin{enumerate}[label=(\roman*),leftmargin=*]
	\item If $\lim_{\epsilon \to 0}\epsilon^{-1/2}\f(\epsilon) = 0$ and $\lim_{\epsilon \to 0}\epsilon^{-1}\f(\epsilon)=\infty$, then we have the following convergence in distribution
	\begin{equation}
		\left(\rdtree, H(U), \langle\mathcal{N}_\epsilon^\f(U),\Phi\rangle\right) \xrightarrow[\epsilon \to 0]{(d)} \left(\rdtree, H(U), \sum_{s\geqslant 0} \Phi\left(s,\mu(\tree_s'),\norma\left(\tree_s'\right)\right)\right),
	\end{equation}
	in the space $\T\times [0,\infty)\times [0,\infty]$.
\item If $\f(\epsilon) = \epsilon$, then we have the following convergence in distribution
\begin{equation}
	\left(\rdtree, H(U), \langle\mathcal{N}_\epsilon^\f(U),\Phi\rangle\right) \xrightarrow[\epsilon \to 0]{(d)} \left(\rdtree, H(U), \sum_{s\leqslant H(U)} \Phi\left(s,\mu(\tree_s'),\norma\left(\tree_s'\right)\right)\right)
\end{equation}
in the space $\T\times [0,\infty)\times [0,\infty]$.
\end{enumerate}
\end{thm}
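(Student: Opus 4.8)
The plan is to reduce the stated convergence in $\T\times[0,\infty)\times[0,\infty]$ to that of Laplace-type functionals and then to evaluate these via Bismut's decomposition. Since $\T\times[0,\infty)$ is Polish, $[0,\infty]$ is compact, and the pair $(\rdtree,H(U))$ does not depend on $\epsilon$, a standard argument (it suffices to test against $y\mapsto g(X)\,\e^{-\lambda y}$ for $\lambda\geqslant 0$, the functions $\e^{-\lambda\cdot}$ generating a dense subalgebra of $C([0,\infty])$) reduces everything to showing that for every bounded continuous $g\colon\T\times[0,\infty)\to[0,\infty)$,
\[\excm{1}\Big[\int_\rdtree\mu(\dd x)\,g(\rdtree,H(x))\,\e^{-\langle\mathcal{N}_\epsilon^\f(x),\Phi\rangle}\Big]\xrightarrow[\epsilon\to 0]{}\begin{cases}\excm{1}[g(\rdtree,H(U))]\;\ex{\e^{-\sum_{s\geqslant 0}\Phi(s,\mu(\tree'_s),\norma(\tree'_s))}}&\text{in case (i)},\\[1.2ex]\excm{1}\!\otimes\mathbb{P}\big[g(\rdtree,H(U))\,\e^{-\sum_{s\leqslant H(U)}\Phi(s,\mu(\tree'_s),\norma(\tree'_s))}\big]&\text{in case (ii)},\end{cases}\]
where $\e^{-\infty}:=0$, we used that $\excm{1}$-a.s.\ $\sigma=1$ and $U\sim\mu$, and $\Phi$ may be replaced by $\lambda\Phi$ (which again satisfies \eqref{eq: hypothesis Phi}) to recover a general $\lambda>0$.

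To identify the left-hand side I would first pass to the excursion measure. Since $\norma$ normalises the mass measure, Lemma~\ref{lemma: sigma = 1 ou > 1}-(i) turns it into $\Gamma(1-1/\gamma)\,\n[\ind{\sigma>1}\sigma^{-1}\int_\rdtree\mu(\dd x)\,\widetilde\Xi_\epsilon(H(x),\M_x^\rdtree)]$, where, re-expressing everything in $\rdtree$ through $\norma(\rdtree)=R_\gamma(\rdtree,\sigma^{1/\gamma-1})$,
\[\widetilde\Xi_\epsilon(H(x),\M_x^\rdtree)=g\big(\norma(\rdtree),\sigma^{1/\gamma-1}H(x)\big)\,\exp\Big(-\!\!\sum_{h_i\leqslant\f(\epsilon)H(x)}\!\!\Phi\big(\beta^{-1}h_i,\;\beta^{-\gamma/(\gamma-1)}\sigma_i,\;\norma(T_i)\big)\Big),\qquad\beta:=\epsilon\sigma^{1-1/\gamma}\]
(note that $\sigma=\mu(\rdtree)$, the grafted tree $\rdtree$, and $\M_x^\rdtree$ are all measurable functions of $(H(x),\M_x^\rdtree)$, and the constraint $h_i\leqslant\f(\epsilon)H(x)$ is scale invariant). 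Applying Bismut's decomposition (Theorem~\ref{thm: Bismut} with $a=0$), extended from the exponential functionals of $(H(x),\M_x^\rdtree)$ appearing there to general ones by a monotone class argument, replaces $(H(x),\M_x^\rdtree)$ by $(t,\sum_{s\leqslant t}\delta_{(s,\tree_s)})$ with $t$ integrated against $\dd t$ on $(0,\infty)$ and $(\tree_s,\,0\leqslant s\leqslant t)$ a Poisson point process of intensity $\m$, and $\rdtree$ by the grafted tree $\Tdown_t$; writing $\mass_t=\mu(\Tdown_t)$ we get
\[\excm{1}\Big[\int_\rdtree\mu(\dd x)\,g(\rdtree,H(x))\,\e^{-\langle\mathcal{N}_\epsilon^\f(x),\Phi\rangle}\Big]=\Gamma(1-1/\gamma)\int_0^\infty\!\!\dd t\;\ex{\ind{\mass_t>1}\,\mass_t^{-1}\,g\big(\norma(\Tdown_t),\mass_t^{1/\gamma-1}t\big)\,\e^{-\Sigma_\epsilon(t)}},\]
with $\Sigma_\epsilon(t):=\sum_{s\leqslant\f(\epsilon)t}\Phi\big((\epsilon\mass_t^{1-1/\gamma})^{-1}s,\;(\epsilon\mass_t^{1-1/\gamma})^{-\gamma/(\gamma-1)}\mu(\tree_s),\;\norma(\tree_s)\big)$; taking $\Phi\equiv 0$ shows the prefactor integral equals $\excm{1}[g(\rdtree,H(U))]$, and by \eqref{eq: tau is integrable} the $\dd t$-integral converges absolutely.

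It then remains to pass to the limit inside this integral, which I would do for each fixed $t>0$ and then integrate (restricting $t$ to a compact interval, the tail being controlled via \eqref{eq: tau is integrable} since $\ind{\mass_t>1}\mass_t^{-1}$ is integrable in $(t,\omega)$). The sum $\Sigma_\epsilon(t)$ only involves the ``near-root'' atoms $(\tree_s)_{s\leqslant\f(\epsilon)t}$, which by the Poisson property are independent of the ``bulk'' atoms $(\tree_s)_{\f(\epsilon)t<s\leqslant t}$. Since $\f(\epsilon)\to 0$, the near-root atoms contribute negligibly to the rest: setting $\mass_t^\flat:=\mass_t-\sum_{s\leqslant\f(\epsilon)t}\mu(\tree_s)$, deleting the near-root subtrees changes $\Tdown_t$ by at most $\max\big(\sup_{s\leqslant\f(\epsilon)t}\H(\tree_s),\,\mass_t-\mass_t^\flat\big)$ in $\ghp$, and $\mass_t-\mass_t^\flat$ is a subordinator with Laplace exponent $\gamma\lambda^{1-1/\gamma}$ run for the vanishing time $\f(\epsilon)t$; combining this with Lemma~\ref{lemma: ghp dilatation}, the bulk factor $\ind{\mass_t>1}\mass_t^{-1}g(\norma(\Tdown_t),\mass_t^{1/\gamma-1}t)$ converges in probability to the analogous quantity built only from the bulk atoms, and (being bounded by $\|g\|_\infty\ind{\mass_t>1}\mass_t^{-1}$) may be replaced by it, by dominated convergence — which decouples it from $\Sigma_\epsilon(t)$. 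Now, by the scaling property of $\m$ (Lemma~\ref{lemma: scaling property under M}), with the bulk-measurable surrogate $\beta^\flat:=\epsilon(\mass_t^\flat)^{1-1/\gamma}$ the point measure $\sum_{s\leqslant\f(\epsilon)t}\delta_{((\beta^\flat)^{-1}s,\,R_\gamma(\tree_s,(\beta^\flat)^{-1}))}$ is, conditionally on the bulk, a Poisson point process of intensity $\m$ on $[0,(\beta^\flat)^{-1}\f(\epsilon)t]$, with $(\beta^\flat)^{-1}\f(\epsilon)t=(\f(\epsilon)/\epsilon)(\mass_t^\flat)^{1/\gamma-1}t$. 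In case (i) this length tends to $\infty$, so that $\ex{\e^{-\Sigma_\epsilon(t)}\mid\text{bulk}}\to\ex{\e^{-\sum_{u\geqslant 0}\Phi(u,\mu(\tree'_u),\norma(\tree'_u))}}$ by monotone convergence ($\sum_{u\leqslant L}$ being nondecreasing in $L$); in case (ii), $\f(\epsilon)=\epsilon$ makes this length equal to $(\mass_t^\flat)^{1/\gamma-1}t\to\mass_t^{1/\gamma-1}t=$ the height of $U$ in $\norma(\Tdown_t)$, and one concludes using that $L\mapsto\ex{\e^{-\sum_{u\leqslant L}\Phi(u,\mu(\tree'_u),\norma(\tree'_u))}}$, being monotone, is continuous off a countable set, which is a.s.\ avoided since $\mass_t$ has a density. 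Re-inserting these limits and using the prefactor identity yields the two claimed limits.

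The main obstacle — and the point at which the hypothesis $\epsilon^{-1/2}\f(\epsilon)\to 0$ is used — is to control the error committed by replacing the genuine rescaling factor $\beta=\epsilon\mass_t^{1-1/\gamma}$ by $\beta^\flat=\epsilon(\mass_t^\flat)^{1-1/\gamma}$ inside $\Sigma_\epsilon(t)$: one cannot condition directly on $\beta$, because it depends on the near-root atoms through $\mass_t$. The Lipschitz hypothesis \eqref{eq: hypothesis Phi} on $\Phi$ in its mass variable bounds the part of this error coming from the mass coordinate by $C\sum_{s\leqslant\f(\epsilon)t}\big|\beta^{-\gamma/(\gamma-1)}-(\beta^\flat)^{-\gamma/(\gamma-1)}\big|\mu(\tree_s)\lesssim\epsilon^{-\gamma/(\gamma-1)}(\mass_t-\mass_t^\flat)^2$ on $\{\mass_t>1\}$, and since $\mass_t-\mass_t^\flat$ is of order $(\f(\epsilon)t)^{\gamma/(\gamma-1)}$, this is of order $\big((\f(\epsilon)^2/\epsilon)\,t^2\big)^{\gamma/(\gamma-1)}$, which tends to $0$ in probability exactly when $\f(\epsilon)=o(\sqrt\epsilon)$ (the exponent $\gamma/(\gamma-1)\geqslant 2$ is harmless once the base tends to $0$, and $\sqrt\epsilon$ is the uniform-in-$\gamma$ threshold). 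Since no regularity of $\Phi$ in its first (``height'') argument is available, the remaining distortion of the time coordinate — a factor $\beta/\beta^\flat=(\mass_t/\mass_t^\flat)^{1-1/\gamma}\to 1$ — must be absorbed not pathwise but at the level of Poisson intensities: comparing $\mathbf 1_{[0,L]}\dd u\,\m(\dd T)$ for nearby $L$ and using $0\leqslant 1-\e^{-\Phi}\leqslant 1$ shows that the corresponding change of the Laplace functional is $o(1)$, locally uniformly in the bulk data. Assembling these estimates with the dominated-convergence argument in $t$ completes the proof in both cases.
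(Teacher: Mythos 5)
Your proposal follows the paper's proof almost step for step: reduction to Laplace functionals, passage to $\n[\ind{\sigma>1}\sigma^{-1}\,\cdots]$ via Lemma \ref{lemma: sigma = 1 ou > 1}-(i), Bismut's decomposition, decoupling of the near-root atoms $(\tree_s)_{s\leqslant\f(\epsilon)t}$ from the bulk $\Tdown_{\g(\epsilon)t}$, the scaling property of $\m$ (Lemma \ref{lemma: scaling property under M}) to identify the conditional Laplace functional on an interval of length $\epsilon^{-1}\f(\epsilon)\mass_t^{-1+1/\gamma}t$, dominated convergence in $t$ via \eqref{eq: tau is integrable}, and — crucially — the same identification of where $\epsilon^{-1/2}\f(\epsilon)\to 0$ enters: the Lipschitz bound $C\epsilon^{-\gamma/(\gamma-1)}(\mass_t-\mass_t^\flat)^2$ for the error made when renormalizing the \emph{mass} coordinate by the bulk mass, which is exactly the term $N^3_\epsilon$ in the paper's Lemma \ref{lemma: approx point measure}.

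There is, however, one step that does not work as described: the treatment of the \emph{time} coordinate. You rescale the grafting heights by $\beta=\epsilon\mass_t^{1-1/\gamma}$, and $\mass_t$ is not bulk-measurable — it contains the total mass $S_{\f(\epsilon)t}$ of the very atoms being summed. Replacing $\beta$ by the bulk-measurable $\beta^\flat$ inside the first argument of $\Phi$ cannot be controlled pathwise (no regularity of $\Phi$ in $h$ is assumed), and your proposed fix — comparing the intensities $\mathbf 1_{[0,L]}\,\dd u\otimes\m$ for nearby $L$ — only accounts for the change of the interval, not for the dilation $s\mapsto(\beta/\beta^\flat)\,s$ applied to each atom. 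Since that dilation factor is itself a functional of the near-root point process (through $S_{\f(\epsilon)t}$), the dilated process is no longer conditionally Poisson given the bulk, so its Laplace functional cannot be computed or compared via the exponential formula; a genuine argument (conditioning on $S_{\f(\epsilon)t}$, a Palm/Mecke computation, or a density argument in $\Phi$) would be needed, and none is sketched. The paper sidesteps this entirely by first proving the statement for the modified measure $\widehat{\mathcal{N}}_\epsilon^\f(U)$ whose first coordinate is $\epsilon^{-1}h_i/H(U)$: under Bismut this becomes $\epsilon^{-1}s/t$, deterministic given the spine length $t$, so the only non-bulk-measurable quantity left in the exponent is the mass normalization — precisely what the Lipschitz hypothesis \eqref{eq: hypothesis Phi} controls — and the original statement is recovered at the very end using the independence of $H(U)$ and the limiting Poisson process. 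You should either adopt that device or close the gap on the random time dilation.
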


\begin{proof}
	 We only prove (i), the proof of (ii) being similar. Let $f \colon \T \to \real$ and $g \colon [0,\infty) \to \real$ be Lipschitz-continuous and bounded and assume that $\Phi \colon [0,\infty)^2 \times\T \to [0,\infty)$ is measurable and satisfies \eqref{eq: hypothesis Phi}. We shall consider the following modification of the measure $\mathcal{N}_\epsilon^\f(U)$:
	 \begin{equation*}
	 	\widehat{\mathcal{N}}_\epsilon^\f(U)\coloneqq\sum_{h_i \leqslant \f(\epsilon) H(U)} \delta_{\left(\epsilon^{-1} h_i/H(U),\epsilon^{-\gamma/(\gamma-1)}\sigma_i, \norma(T_i)\right)}.
	 \end{equation*}

	\noindent
	\textbf{Step 1.} Set
	\begin{align*}
		F(\epsilon) &\coloneqq\excm{1}\left[f(\rdtree)g(H(U))\expp{-\left\langle \widehat{\mathcal{N}}_\epsilon^\f(U), \Phi\right\rangle}\right]\nonumber \\
		&=\excm{1}\left[\int_\rdtree \mu(\dd x) f(\rdtree)g(H(x))\expp{-\sum_{h_i \leqslant \f(\epsilon)H(x)} \Phi\left(\epsilon^{-1}h_i/H(x),\epsilon^{-\gamma/(\gamma-1)}\sigma_i, \norma(\rdtree_i)\right)}\right].
	\end{align*}
	Using Lemma \ref{lemma: sigma = 1 ou > 1}-(i) and Theorem \ref{thm: Bismut}, we have
	\begin{align}\label{eq: expression F}
		\Gamma(1-1/\gamma)^{-1}F(\epsilon)&=
		\begin{multlined}[t]
			\n\left[\frac{1}{\sigma}\ind{\sigma>1} \int_\rdtree \mu(\dd x) f\circ \norma\left(\rdtree\right) g\left(\sigma^{-1+1/\gamma}H(x)\right)\right. \\
			\left.\times\expp{-\sum_{h_i \leqslant \f(\epsilon) H(x)} \Phi\left(\epsilon^{-1}h_i/H(x),\epsilon^{-\gamma/(\gamma-1)}\sigma^{-1}\sigma_i,\norma\left(\rdtree_i\right)\right)}\right]
		\end{multlined}\nonumber\\
		&= 
		\begin{multlined}[t]
			\int_0^\infty \dd t \operatorname{\mathbb{E}}\left[\frac{1}{\mass_t}\ind{\mass_t >1} f\circ \norma\left(\Tdown_t\right) g\left(\mass_t^{-1+1/\gamma} t\right)\right.\\
			\left. \times\expp{-\sum_{s\leqslant \f(\epsilon) t} \Phi\left(\epsilon^{-1} s/t,\epsilon^{-\gamma/(\gamma-1)}\mass_t^{-1}\mu(\tree_s),\norma\left(\tree_s\right)\right)}\right].
		\end{multlined}
	\end{align}

	\noindent
	\textbf{Step 2.} The proof of the following lemma is postponed to Section \ref{section: approx point measure}. To simplify notation, we introduce $\g(\epsilon) = 1-\f(\epsilon)$.
	\begin{lemma}\label{lemma: approx point measure}
		Assume that $\lim_{\epsilon \to 0} \epsilon^{-1/2} \f(\epsilon) = 0$. Let $f \colon \T \to \real$ and $g \colon [0,\infty) \to \real$ be Lipschitz-continuous and bounded and assume that $\Phi \colon [0,\infty)^2 \times\T \to [0,\infty)$ is measurable and satisfies \eqref{eq: hypothesis Phi}. We have
		\begin{multline*}
			\lim_{\epsilon \to 0}\Gamma(1-1/\gamma)^{-1}F(\epsilon) - \int_0^\infty \dd t \operatorname{\mathbb{E}}\left[\frac{1}{\mass_{\g(\epsilon) t}}\ind{\mass_{\g(\epsilon)t} >1} f\circ\norma\left(\Tdown_{\g(\epsilon)t}\right) g\left(\mass_{\g(\epsilon)t}^{-1+1/\gamma} t\right)\right.\\
			\left. \times\expp{-\sum_{s\leqslant \f(\epsilon) t} \Phi\left(\epsilon^{-1} s/t,\epsilon^{-\gamma/(\gamma-1)}\mass_{\g(\epsilon)t}^{-1}\mu(\tree_s),\norma\left(\tree_s\right)\right)}\right]=0.
		\end{multline*}
	\end{lemma}
	Since $(\tree_s, \, 0\leqslant s \leqslant t)$ is a Poisson point process, it follows from the definition of $\Tdown_{\g(\epsilon)t}$ that $(\tree_s, \, 0\leqslant s \leqslant \f(\epsilon) t)$ is independent of $\Tdown_{\g(\epsilon)t}$. Thus, denoting by $(\tree_s', \, s\geqslant 0)$ a Poisson point process with intensity $\m$ which is independent of $\Tdown_{\g(\epsilon)t}$, recalling that $\mass_{\g(\epsilon)t}$ is a measurable function of $\Tdown_{\g(\epsilon)t}$ and making the change of variable $u=\g(\epsilon)t$, we have 
	\begin{equation}\label{eq: approx point measure}\lim_{\epsilon\to 0}\left|\Gamma(1-1/\gamma)^{-1}F(\epsilon) - \g(\epsilon)^{-1}\int_0^\infty \dd u\ex{Y_\epsilon(u)}\right|=0,
	\end{equation} where
	\begin{multline}
		Y_\epsilon(u)= \frac{1}{\mass_{u}}\ind{\mass_{u} >1} f\circ \norma\left(\Tdown_{u}\right) g\left(\g(\epsilon)^{-1}\mass_{u}^{-1+1/\gamma} u\right)\\
		\times \condex{\expp{-\sum_{s\leqslant \f(\epsilon)\g(\epsilon)^{-1} u} \Phi\left(\epsilon^{-1}\g(\epsilon) s/u, \epsilon^{-\gamma/(\gamma-1)}\mass_{u}^{-1}\mu(\tree_s'),\norma\left(\tree_s' \right)\right)}}{\Tdown_{u}}.
	\end{multline}
	
	\noindent
	\textbf{Step 3.} For fixed $\lambda >0$, we have
	\begin{multline}
		\ex{\expp{-\sum_{s\leqslant \f(\epsilon)\g(\epsilon)^{-1} u} \Phi\left(\epsilon^{-1}\g(\epsilon) s/u,\epsilon^{-\gamma/(\gamma-1)}\lambda^{-1}\mu(\tree_s'),\norma\left(\tree_s'\right)\right)}}\\
		\begin{aligned}
			&= \expp{-\int_0^{\f(\epsilon)\g(\epsilon)^{-1} u}  \dd s \m\left[1-\e^{-\Phi\left(\epsilon^{-1}\g(\epsilon)s/u,\epsilon^{-\gamma/(\gamma-1)}\lambda^{-1}\sigma,\norma(\rdtree)\right)}\right]} \nonumber\\
			&=\expp{-\g(\epsilon)^{-1}\int_0^{\epsilon^{-1}\f(\epsilon)\lambda^{-1+1/\gamma}u}\dd r \m\left[1-\e^{-\Phi\left(\lambda^{1-1/\gamma}r/u,\sigma,\norma(\rdtree)\right)}\right]},
		\end{aligned}
	\end{multline}
	where we made the change of variable $r = \epsilon^{-1}\g(\epsilon)\lambda^{-1+1/\gamma}s$ and used Lemma \ref{lemma: scaling property under M} with $a=\epsilon\lambda^{1-1/\gamma}$. (Notice that $\norma(\rdtree)$ has the same distribution under $a\m$ for every $a>0$). Thus, we deduce that a.s. for every $u>0$
	\begin{multline}
		\lim_{\epsilon\to 0}\condex{\expp{-\sum_{s\leqslant \f(\epsilon)\g(\epsilon)^{-1} u} \Phi\left(\epsilon^{-1}\g(\epsilon) s/u,\epsilon^{-\gamma/(\gamma-1)}\mass_{u}^{-1}\mu(\tree_s'),\norma\left(\tree_s'\right)\right)}}{\Tdown_{u}} \\
		\begin{aligned}
			&= \lim_{\epsilon \to 0} \expp{-\g(\epsilon)^{-1}\int_0^{\epsilon^{-1}\f(\epsilon) \lambda^{-1+1/\gamma}u}  \dd r \m\left[1-\e^{-\Phi\left(\lambda^{1-1/\gamma}r/u,\sigma,\norma( \rdtree)\right)}\right]_{\left|\lambda=\mass_{u}\right.}}\nonumber\\
			&=\expp{-\int_0^{\infty}  \dd r \m\left[1-\e^{-\Phi\left(\lambda^{1-1/\gamma}r/u,\sigma,\norma( \rdtree)\right)}\right]_{\left|\lambda=\mass_{u}\right.}}\nonumber\\
			&= \condex{\expp{-\sum_{s\geqslant 0} \Phi\left(\mass_u^{1-1/\gamma}s/u,\mu(\tree_s'),\norma\left(\tree_s'\right)\right)}}{\Tdown_u}.
		\end{aligned}
	\end{multline}
	
	\noindent
	\textbf{Step 4.} We deduce that a.s. for every $u>0$
	\begin{multline}
		\lim_{\epsilon \to 0}Y_\epsilon(u)= \frac{1}{\mass_u}\ind{\mass_u>1} f\circ \norma \left(\Tdown_u\right) g\left(\mass_u^{-1+1/\gamma}u\right)\\
		\times\condex{\expp{-\sum_{s\geqslant 0} \Phi\left(\mass_u^{1-1/\gamma}s/u,\mu(\tree_s'),\norma\left(\tree_s'\right)\right)}}{\Tdown_u}.
	\end{multline}
	Since $|Y_\epsilon(u)|\leqslant \norm{f}_\infty \norm{g}_\infty \mass_u^{-1}\ind{\mass_u>1}$ where the right-hand side is integrable with respect to $\mathbf{1}_{(0,\infty)}(u)\, \dd u \otimes \mathbb{P}$ thanks to \eqref{eq: tau is integrable}, it follows by dominated convergence that
	\begin{multline}
		\lim_{\epsilon \to 0}\int_0^\infty \dd u\ex{Y_\epsilon(u)}
		= \int_0^\infty \dd u\operatorname{\mathbb{E}}\left[\frac{1}{\mass_u}\ind{\mass_u>1} f\circ \norma \left(\Tdown_u\right) g\left(\mass_u^{-1+1/\gamma}u\right)\right.\\
		\left.\times\expp{-\sum_{s\geqslant 0} \Phi\left(\mass_u^{1-1/\gamma}s/u,\mu(\tree_s'),\norma\left(\tree_s'\right)\right)}\right].
	\end{multline}
	
	\noindent
	\textbf{Step 5.} Using Theorem \ref{thm: Bismut} and Lemma \ref{lemma: sigma = 1 ou > 1}-(i) again, we get that
	\begin{align*}
		\lim_{\epsilon \to 0}F(\epsilon) &= 
		\begin{multlined}[t]
			\Gamma(1-1/\gamma)\int_0^\infty \dd u\operatorname{\mathbb{E}}\left[\frac{1}{\mass_u}\ind{\mass_u>1} f\circ \norma \left(\Tdown_u\right) g\left(\mass_u^{-1+1/\gamma}u\right) \right.\\
			\left.\times\expp{-\sum_{s\geqslant 0} \Phi\left(\mass_u^{1-1/\gamma}s/u,\mu(\tree_s'),\norma\left(\tree_s'\right)\right)}\right]
		\end{multlined}\\
		&= 
		\begin{multlined}[t]
			\Gamma(1-1/\gamma)\n\left[\frac{1}{\sigma}\ind{\sigma>1}\int_\rdtree \mu(\dd x)f\circ \norma\left(\rdtree\right)g\left(\sigma^{-1+1/\gamma}H(x)\right)\right.\\
			\left.\times\expp{-\sum_{s\geqslant 0} \Phi\left(\sigma^{1-1/\gamma}s/H(x),\mu(\tree_s'),\norma\left(\tree_s'\right)\right)}\right]
		\end{multlined}\\
		&=\excm{1}\left[\int_{\rdtree}\mu(\dd x)f(\rdtree)g(H(x))\expp{-\sum_{s\geqslant 	0}\Phi\left(s/H(x),\mu(\tree_s'),\norma\left(\tree_s'\right)\right)}\right]\\
		&=\excm{1}\left[f(\rdtree)g(H(U))\expp{-\sum_{s\geqslant 	0}\Phi\left(s/H(U),\mu(\tree_s'),\norma\left(\tree_s'\right)\right)}\right],
	\end{align*}
	where, with a slight abuse of notation, we denote by $(\tree_s', \, s \geqslant 0)$ a Poisson point process with intensity $\m$ under $\excm{1}$, independent of $(\rdtree, H(U))$. Since $H(U)$ and $(\tree_s', \, s \geqslant 0)$ are independent, this concludes the proof.
\end{proof}

As a consequence of Theorem \ref{thm: zooming}, the next result gives the asymptotic behavior of the total mass of the subtrees grafted near the root of the stable tree.
\begin{corollary}\label{corollary: zooming mass}
	Let $\rdtree$ be the normalized stable tree with branching mechanism $\psi(\lambda) = \lambda^\gamma$ where $\gamma \in (1,2]$. Conditionally on $\rdtree$, let $U$ be $\rdtree$-valued random variable with distribution $\mu$ under $\excm{1}$. Assume that $\lim_{\epsilon \to 0}\epsilon^{-1/2}\f(\epsilon) =0$ and $\lim_{\epsilon \to 0} \epsilon^{-1}\f(\epsilon)=\infty$. Define a process $S^\epsilon$ by
	\begin{equation*}
		S_t^\epsilon \coloneqq \sum_{ h_i\leqslant \epsilon t \wedge \f(\epsilon)H(U)}\epsilon^{-\gamma/(\gamma-1)} \sigma_i, \quad t \geqslant 0.
	\end{equation*}
	Then we have the following convergence in distribution
	\begin{equation}\label{eq: cv to subordinator}
		\left(\rdtree,H(U),\left( S_t^\epsilon,\, t \geqslant 0\right)\right) \xrightarrow[\epsilon \to 0]{(d)} \left(\rdtree,H(U),\left(S_t,\, t\geqslant 0\right)\right)
	\end{equation}
in the space $\T \times \real \times D[0,\infty)$, where $S$ is a stable subordinator with Laplace exponent $\phi$ given by \eqref{eq: laplace exponent}, independent of $(\rdtree,H(U))$.
\end{corollary}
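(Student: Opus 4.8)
The plan is to read the statement off from Theorem~\ref{thm: zooming}(i), applied to functions $\Phi$ that are \emph{linear} in the mass variable, with the height variable entering only through indicators. First I would identify the limit: with $(\tree_s',\,s\geqslant 0)$ the Poisson point process of intensity $\m$ from Theorem~\ref{thm: zooming}(i), independent of $(\rdtree,H(U))$, set $S_t:=\sum_{s\leqslant t}\mu(\tree_s')$. By the discussion following \eqref{eq: definition S}, the process $(S_t,\,t\geqslant 0)$ is precisely the stable subordinator with Laplace exponent $\phi$ given by \eqref{eq: laplace exponent}, and it is independent of $(\rdtree,H(U))$ by construction. So it remains to establish \eqref{eq: cv to subordinator} with this $S$.

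For the finite-dimensional distributions, fix $k\geqslant 1$, $0<t_1<\cdots<t_k$ and $\lambda_1,\dots,\lambda_k\geqslant 0$, and consider $\Phi(h,m,T)=m\sum_{j=1}^k\lambda_j\ind{h\leqslant t_j}$. This $\Phi$ is measurable and Lipschitz in $m$ with constant $\sum_j\lambda_j$, hence satisfies \eqref{eq: hypothesis Phi}; that it has no regularity in $h$ is harmless, which is exactly the point of the generality in Theorem~\ref{thm: zooming}(i). The bookkeeping is: an atom of $\mathcal{N}_\epsilon^\f(U)$ has first coordinate $\epsilon^{-1}h_i\leqslant t_j$ iff $h_i\leqslant\epsilon t_j$, and together with the constraint $h_i\leqslant\f(\epsilon)H(U)$ this reads $h_i\leqslant\epsilon t_j\wedge\f(\epsilon)H(U)$, so that $\langle\mathcal{N}_\epsilon^\f(U),\Phi\rangle=\sum_{j=1}^k\lambda_j S_{t_j}^\epsilon$; likewise $\sum_{s\geqslant 0}\Phi(s,\mu(\tree_s'),\norma(\tree_s'))=\sum_{j=1}^k\lambda_j S_{t_j}$, using that the Poisson point process a.s.\ charges none of the deterministic times $t_j$. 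Testing the convergence of Theorem~\ref{thm: zooming}(i) against the bounded continuous maps $(T,h,z)\mapsto F(T)G(h)\e^{-z}$ on $\T\times[0,\infty)\times[0,\infty]$, with $F\colon\T\to\real$ and $G\colon[0,\infty)\to\real$ bounded continuous, then gives
\[
	\excm{1}\!\left[F(\rdtree)\,G(H(U))\,\e^{-\sum_{j=1}^k\lambda_j S_{t_j}^\epsilon}\right]\;\xrightarrow[\epsilon\to 0]{}\;\excm{1}\!\left[F(\rdtree)\,G(H(U))\,\e^{-\sum_{j=1}^k\lambda_j S_{t_j}}\right],
\]
and a routine monotone-class (or Stone--Weierstrass) argument upgrades this to joint convergence in distribution of $(\rdtree,H(U),S_{t_1}^\epsilon,\dots,S_{t_k}^\epsilon)$ for every finite collection of times.

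It then remains to promote this to convergence in $\D$. Here I would use that each $S^\epsilon$ is nondecreasing, that $S_K^\epsilon$ converges in law for every $K>0$ (so $(S_K^\epsilon)_\epsilon$ is tight in $[0,\infty)$), and that the limit $S$, being a subordinator, has no fixed time of discontinuity; the standard criterion for weak convergence of nondecreasing processes then yields tightness of $(S^\epsilon)_\epsilon$ in $\D$, and any subsequential limit is pinned down by the finite-dimensional distributions computed above, since a real process without fixed discontinuities has its law determined by its marginals along a dense set of times. Combining this with the (trivial) tightness of the fixed-law pair $(\rdtree,H(U))$ gives \eqref{eq: cv to subordinator} in $\T\times\real\times\D$. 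I expect the only point needing real care to be this last functional upgrade --- making the tightness in $\D$ and the identification of subsequential limits precise for nondecreasing processes; the rest is a direct specialization of Theorem~\ref{thm: zooming}(i) together with the facts about $S$ already recorded in Section~\ref{section: stable tree}.
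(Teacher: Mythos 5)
Your proof is correct, but it takes a genuinely different route from the paper's. The paper follows the classical point-process-to-L\'evy-process scheme of Resnick: it truncates the atoms at a mass level $\delta$, uses the a.s.\ continuity of the restriction and summation functionals on $\M_p$ together with the point-measure form of Theorem \ref{thm: zooming}-(i) to get convergence of the truncated sums $S^{\epsilon,\delta}$, shows the small atoms are uniformly negligible by applying Theorem \ref{thm: zooming}-(i) a second time to a Lipschitz function sandwiched between $x\ind{x\leqslant\delta}$ and $x\ind{x\leqslant 2\delta}$, and concludes with Billingsley's second converging-together theorem. You instead feed $\Phi(h,m,T)=m\sum_j\lambda_j\ind{h\leqslant t_j}$ --- which is exactly Lipschitz in the mass variable and needs no regularity in $h$, i.e.\ precisely the generality Theorem \ref{thm: zooming}-(i) provides --- directly into the theorem, read off the joint Laplace transforms of the finite-dimensional marginals of $S^\epsilon$ with no truncation at all, and then upgrade to $\D$ via the standard criterion for nondecreasing processes converging in fdd to a limit that is continuous in probability (Bingham; Jacod--Shiryaev VI.3.37). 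Both arguments are sound. Your route is shorter here because it exploits monotonicity of $S^\epsilon$ for the tightness in $\D$, whereas the paper's truncation machinery is the one that would survive if one wanted convergence of non-monotone functionals of the atoms (as in Remark \ref{remark: haas}). The two points in your write-up that genuinely need care --- the continuity-theorem/monotone-class step identifying the joint fdd limit with $(\rdtree,H(U))$, and the functional upgrade for monotone processes (best done via Skorokhod representation along a countable dense set of times followed by the deterministic fact that monotone functions converging pointwise on a dense set to a continuous limit converge in $J_1$) --- are both standard and you have flagged them accurately.
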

\begin{proof}
	We adapt the arguments of \cite[Chapter VII, Section 7.2]{resnick2007heavy}, see also Theorem 3.1 and Corollary 3.4 in \cite{tyrankaminska2010convergence}. Since the process $S$ has no fixed points of discontinuity, it is enough to show that the convergence \eqref{eq: cv to subordinator} holds in $\T \times \real \times D[0,r]$ for every $r >0$.
	
	Fix $r >0$ and let $\delta >0$. Define
	\begin{equation*}
		S_t^{\epsilon,\delta} \coloneqq \sum_{ h_i\leqslant \epsilon t \wedge \f(\epsilon)H(U)} \epsilon^{-\gamma/(\gamma-1)}\sigma_i \ind{\epsilon^{-\gamma/(\gamma-1)}\sigma_i >\delta}, \quad t \geqslant 0.
	\end{equation*}
 	Recall that for a metric space $X$, we denote by $\M_p(X)$ the space of point measures on $X$ equipped with the topology of vague convergence. It is known (see \cite[p. 215]{resnick2007heavy}) that the restriction mapping
	\begin{equation*}
		m \mapsto m_{\left|[0,\infty)\times (\delta,\infty)\right.}
	\end{equation*}
	is a.s. continuous from $\M_p([0,\infty)^2)$ to $\M_p([0,\infty)\times (\delta,\infty))$ with respect to the distribution of the Poisson random measure $\sum_{s\geqslant 0} \delta_{(s,\mu(\tree'_s))}$. Furthermore, the summation mapping
	\begin{equation*}
		m \mapsto \left(\int_{[0,t]\times (\delta,\infty)}x \, m(\dd s,\dd x),\, 0\leqslant t \leqslant r\right)
	\end{equation*}
	is a.s. continuous from $\M_p([0,\infty)\times (\delta,\infty))$ to $D[0,r]$ with respect to the same distribution. We deduce from Theorem \ref{thm: zooming}-(i) and the continuous mapping theorem the following convergence in distribution
	\begin{equation}\label{eq: second converging together 1}
			\left(\rdtree,H(U),\left( S_t^{\epsilon,\delta},\, 0\leqslant t \leqslant r\right)\right) \xrightarrow[\epsilon \to 0]{(d)} \left(\rdtree,H(U),\left(\sum_{s\leqslant t}\mu(\tree_s')\ind{\mu(\tree_s')>\delta},\, 0\leqslant t\leqslant r\right)\right)
	\end{equation}
in $\T\times \real \times D[0,r]$, where $(\tree_s', s\geqslant 0)$ is a Poisson point process with intensity $\m$, independent of $(\rdtree,H(U))$.

Furthermore, since $\sum_{s\leqslant r} \mu(\tree_s')$ is $\excm{1}$-a.s. finite, it is clear by the dominated convergence theorem that $\excm{1}$-a.s.
\begin{equation*}
	\lim_{\delta \to 0}\sup_{t \leqslant r} \left|\sum_{s\leqslant t} \mu(\tree_s') - \sum_{s\leqslant t}\mu(\tree_s')\ind{\mu(\tree_s')>\delta}\right| = \lim_{\delta \to 0}\sum_{s\leqslant r} \mu(\tree_s')\ind{\mu(\tree_s')\leqslant \delta} = 0.
\end{equation*}
Since uniform convergence on $[0,T]$ implies convergence for the Skorokhod $J1$ topology, we deduce that
\begin{equation}\label{eq: second converging together 2}
	\left(\rdtree,H(U),\left(\sum_{s\leqslant t}\mu(\tree_s')\ind{\mu(\tree_s')>\delta},\, 0\leqslant t\leqslant r\right)\right) \xrightarrow[\delta \to 0]{(d)} \left(\rdtree,H(U),\left(S_t,\, 0\leqslant t\leqslant r\right)\right),
\end{equation}
where $S_t = \sum_{s\leqslant t} \mu(\tree_s')$ is a stable subordinator with Laplace exponent $\phi$, independent of $(\rdtree,H(U))$.

Finally, we shall prove that for every $\eta >0$
\begin{equation}\label{eq: second converging together 3}
	\lim_{\delta \to 0} \limsup_{\epsilon\to 0} \excm{1}\left[\sup_{0\leqslant t\leqslant T}\left|S_t^\epsilon-S_t^{\epsilon,\delta}\right|\geqslant \eta\right] = 0.
\end{equation}
Let $f \colon [0,\infty) \to [0,\infty)$ be Lipschitz-continuous such that $x\mathbf{1}_{[0,\delta]}(x)\leqslant f (x)\leqslant x\mathbf{1}_{[0,2\delta]}(x)$. We have
\begin{align*}
	\sup_{0\leqslant t \leqslant r} \left|S_t^\epsilon-S_t^{\epsilon,\delta}\right| &= \sum_{h_i \leqslant \epsilon r \wedge \f(\epsilon)H(U)} \epsilon^{-\gamma/(\gamma-1)}\sigma_i \ind{\epsilon^{-\gamma/(\gamma-1)}\sigma_i\leqslant \delta}\\ 
	&\leqslant \sum_{h_i \leqslant \epsilon r \wedge \f(\epsilon)H(U)}  f\left(\epsilon^{-\gamma/(\gamma-1)}\sigma_i\right).
\end{align*}
It follows that
\begin{align}\label{eq: uniform estimate}
	\limsup_{\epsilon\to 0} \excm{1}\left[\sup_{0\leqslant t\leqslant r}\left|S_t^\epsilon-S_t^{\epsilon,\delta}\right|\geqslant \eta\right]&\leqslant \limsup_{\epsilon\to 0} \excm{1}\left[ \sum_{h_i \leqslant \epsilon r \wedge \f(\epsilon)H(U)}  f\left(\epsilon^{-\gamma/(\gamma-1)}\sigma_i\right)\geqslant \eta\right]\nonumber\\
	&\leqslant \excm{1}\left[\sum_{s\leqslant r} f\left(\mu(\tree_s')\right)\geqslant \eta\right]\nonumber\\
	&\leqslant \excm{1}\left[\sum_{s\leqslant r} \mu(\tree_s')\ind{\mu(\tree_s')\leqslant 2\delta}\geqslant \eta\right],
\end{align}
where in the second inequality we used the Portmanteau theorem together with the following convergence in distribution 
\begin{equation*}
	\sum_{h_i \leqslant \epsilon r \wedge \f(\epsilon)H(U)}  f\left(\epsilon^{-\gamma/(\gamma-1)}\sigma_i\right) \xrightarrow[\epsilon \to 0]{(d)}\sum_{s\leqslant r} f\left(\mu(\tree_s')\right),
\end{equation*}
which holds thanks to Theorem \ref{thm: zooming}-(i) applied with $\Phi(h,a,T) = \ind{h\leqslant r} f(a)$. But, by the dominated convergence theorem, we have that $\excm{1}$-a.s.
\begin{equation*}
	\lim_{\delta \to 0}\sum_{s\leqslant r} \mu(\tree_s')\ind{\mu(\tree_s')\leqslant 2\delta}=0.
\end{equation*}
Together with \eqref{eq: uniform estimate}, this implies \eqref{eq: second converging together 3}.

Putting together \cref{eq: second converging together 1,eq: second converging together 2,eq: second converging together 3}, it follows from the second converging together theorem, see e.g. \cite[Theorem 3.2]{billingsley1999convergence}, that
\begin{equation*}
		\left(\rdtree,H(U),\left( S_t^\epsilon,\, 0\leqslant t \leqslant r\right)\right) \xrightarrow[\epsilon \to 0]{(d)} \left(\rdtree,H(U),\left(S_t,\, 0\leqslant t\leqslant r\right)\right)
\end{equation*}
in $\T\times \real \times D[0,r]$. This finishes the proof.
\end{proof}

\begin{remark}\label{remark: haas}
	Let us comment on the connection between Theorem \ref{thm: zooming} and the small time asymptotics of the fragmentation at height of the stable tree $F^{-}$, see \cite[Section 4]{bertoin2002selfsimilar} for the Brownian case $\gamma=2$ and \cite{miermont2003fragmentations} for the case $\gamma \in (1,2)$. We briefly recall its definition. Consider the normalized stable tree $\rdtree$ and denote by $\left(\rdtree_j, \, j \in J_t\right)$ the connected components of the set $\{x \in \rdtree\colon \, H(x)>t\}$ obtained from $\rdtree$ by removing vertices located at height $\leqslant t$. Then $F^{-}(t) = (F_1^{-}(t),F_2^{-}(t), \ldots)$ is defined as the decreasing sequence of masses $\left(\mu(\rdtree_j),\, j \in J_t\right)$. In \cite[Section 5.1]{haas2007fragmentation}, Haas obtains the following functional convergence in distribution as a consequence of a more general result
	\begin{equation}\label{eq: haas}
	\epsilon^{-\gamma/(\gamma-1)} (1-F_1^{-}(\epsilon \cdot),(F_2^{-}(\epsilon \cdot), F_3^{-}(\epsilon \cdot), \ldots)) \xrightarrow[\epsilon \to 0]{(d)} (S, FI),
	\end{equation}
	where the convergence holds with respect to the Skorokhod $J1$ topology. Here $FI$ is a fragmentation process with immigration and $S$ is a stable subordinator with index $1-1/\gamma$ representing the total mass of immigrants.
	
	At least heuristically, this can be recovered from Theorem \ref{thm: zooming}. 
	Let $U \in \rdtree$ be a leaf chosen uniformly at random. It is not difficult to see that for $0\leqslant t \leqslant H(U)$, with high probability as $\epsilon \to 0$, the biggest fragment at time $\epsilon t$ is the one containing $U$. Thus we get $	1- F_1^{-}(\epsilon t) = \sum_{h_i \leqslant \epsilon t} \sigma_i$ and
	\begin{equation*}
	 (F_2^{-}(\epsilon t), F_3^{-}(\epsilon t), \ldots) = (\mu(\rdtree_i^{\geqslant \epsilon t - h_i}), \, h_i \leqslant \epsilon t)^{\downarrow}
	\end{equation*}
is the decreasing rearrangement of the masses of $\rdtree_i^{\geqslant \epsilon t -h_i}$ for the subtrees grafted at height $h_i \leqslant \epsilon t$. Here we denote by $T^{\geqslant r}= T\setminus T^{<r} = \{x \in T\colon \, H(x)\geqslant r\}$ the set of vertices of $T$ above height $r$. To recover \eqref{eq: haas}, we may prove the joint convergence of
\begin{equation}\label{eq: haas approx}
	\left(\sum_{h_i \leqslant \epsilon \cdot \wedge \epsilon H(U)}\epsilon^{-\gamma/(\gamma-1)} \sigma_i,\sum_{h_i \leqslant \epsilon H(U)} \delta_{\left(\ind{h_i \leqslant \epsilon t} \epsilon^{-\gamma/(\gamma-1)}\mu(\rdtree_i^{\geqslant \epsilon t-h_i}), \, t \geqslant 0\right)}\right),
\end{equation}
then argue that the convergence of the point measure in \eqref{eq: haas approx} implies that of the rearranged atoms. Notice that we may obtain the convergence of the first coordinate in \eqref{eq: haas approx} using Theorem \ref{thm: zooming}-(ii), similarly to how we proved Corollary \ref{corollary: zooming mass} using Theorem \ref{thm: zooming}-(i). For the convergence of the second coordinate, the idea is to consider
$	\Phi(h,a,T) = F\left((\ind{h\leqslant t} a\mu(T^{\geqslant a^{-1+1/\gamma}( t-h)}),\, t \geqslant 0)\right),$
where $F\colon D[0,\infty) \to [0,\infty)$ is Lipschitz-continuous with compact support. However, $\Phi$ is not Lipschitz-continuous with respect to $a$ so our result does not apply directly. Similarly, to get the convergence of the dust, notice that
\begin{equation*}
	\mu(\rdtree^{< \epsilon t}) = \sum_{h_i \leqslant \epsilon t} \mu(\rdtree_i^{< \epsilon t- h_i}).
\end{equation*}
Thus the idea is to apply Theorem \ref{thm: zooming}-(ii) with $\Phi(h,a,T) = \ind{h\leqslant t} a \mu(T^{<a^{-1+1/\gamma}(t-h)})$ which again does not satisfy the assumptions.
\end{remark}

\section{Asymptotic behavior of $\Z_{\alpha,\beta}$ in the case $\beta/\alpha^{1-1/\gamma} \to c \in [0,\infty)$}\label{section: subcritical}
We start by showing that if $U\in \rdtree$ is a leaf chosen uniformly at random, $Z_{\alpha,\beta}(U)$ defined in \eqref{eq: the functional on a Levy tree} converges in disrtibution after proper rescaling.
\begin{proposition}\label{prop: subcritical case}
	Assume that $\alpha \to \infty$, $\beta \geqslant 0$ and $\beta/\alpha^{1-1/\gamma} \to c \in [0,\infty)$. 	Let $\rdtree$ be the normalized stable tree with branching mechanism $\psi(\lambda) = \lambda^{\gamma}$ where $\gamma\in (1,2]$. Conditionally on $\rdtree$, let $U$ be a $\rdtree$-valued random variable with disribution $\mu$ under $\excm{1}$. Then we have the following convergence in distribution
	\begin{equation}\label{eq: convergence random leaf}
		\left(\rdtree,H(U),\alpha^{1-1/\gamma} \H^{-\beta} Z_{\alpha, \beta}(U)\right) \xrightarrow[\alpha \to \infty]{(d)} \left(\rdtree, H(U),\int_0^\infty \e^{-S_t-ct/\H}\,\dd t\right),
	\end{equation}
where $(S_t,\, t \geqslant 0)$ is a stable subordinator with Laplace exponent $\phi$ given by \eqref{eq: laplace exponent}, independent of $(\rdtree,H(U))$.
\end{proposition}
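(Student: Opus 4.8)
The plan is to reduce, via the rerooting identity of Lemma~\ref{lemma: sigma = 1 ou > 1}-(i) together with Bismut's decomposition in the form~\eqref{eq: Bismut}, the convergence~\eqref{eq: convergence random leaf} to a statement about the tree $\Tdown_t$ and the subordinator $S$. Fix bounded continuous $\Phi_1\colon\T\to\real$, $\Phi_2\colon[0,\infty)\to\real$ and $\Phi_3\colon[0,\infty)\to\real$; products of such functions being convergence-determining on the Polish space $\T\times[0,\infty)\times[0,\infty)$, it is enough to identify the limit of $\excm{1}[\Phi_1(\rdtree)\Phi_2(H(U))\Phi_3(\alpha^{1-1/\gamma}\H^{-\beta}Z_{\alpha,\beta}(U))]$. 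Applying Lemma~\ref{lemma: sigma = 1 ou > 1}-(i), using the scaling rules~\eqref{eq: scaling Z} for $H$, $\H$ and $Z_{\alpha,\beta}$ under $R_\gamma$ together with $\norma(\rdtree)=R_\gamma(\rdtree,\sigma^{-1+1/\gamma})$, and then~\eqref{eq: Bismut} (with the identifications $\sigma_{H(x)-r,x}\leftrightarrow\mass_r$, $\H_{H(x)-r,x}\leftrightarrow\height_r$, $H(x)\leftrightarrow t$, $\rdtree\leftrightarrow\Tdown_t$), this expectation becomes
\[
\Gamma(1-1/\gamma)\int_0^\infty \dd t\ \ex{\ind{\mass_t>1}\tfrac{1}{\mass_t}\,\Phi_1(\norma(\Tdown_t))\,\Phi_2(\mass_t^{-1+1/\gamma}t)\,\Phi_3(\mathcal J_\alpha^t)},
\]
where $\mathcal J_\alpha^t:=\alpha^{1-1/\gamma}\mass_t^{-\alpha-(1-1/\gamma)}\height_t^{-\beta}\int_0^t\mass_\rho^\alpha\height_\rho^\beta\,\dd\rho$, and, substituting $\rho=t-\alpha^{-(1-1/\gamma)}w$ and using $\mass_{t-r}=\mass_t-S_{r-}$,
\[
\mathcal J_\alpha^t=\mass_t^{-(1-1/\gamma)}\int_0^{\alpha^{1-1/\gamma}t}\Big(1-\frac{S_{\alpha^{-(1-1/\gamma)}w}}{\mass_t}\Big)^{\alpha}\Big(\frac{\height_{t-\alpha^{-(1-1/\gamma)}w}}{\height_t}\Big)^{\beta}\dd w .
\]
Since $\Phi_3$ is bounded and $\int_0^\infty\ex{\ind{\mass_t>1}\mass_t^{-1}}\dd t=\Gamma(1-1/\gamma)^{-1}<\infty$ by~\eqref{eq: tau is integrable}, dominated convergence in $t$ reduces matters to showing that, for Lebesgue-a.e.\ $t>0$, one has jointly $(\norma(\Tdown_t),\mass_t,\mathcal J_\alpha^t)\xrightarrow{(d)}(\norma(\Tdown_t),\mass_t,\mass_t^{-(1-1/\gamma)}\int_0^\infty\e^{-S_w'/\mass_t-cw/\height_t}\dd w)$ with $(S_w')_{w\ge0}$ a stable subordinator of Laplace exponent~\eqref{eq: laplace exponent} independent of $\Tdown_t$ (note $\mass_t$ has a continuous law, so $\ind{\mass_t>1}/\mass_t$ is a bounded a.s.\ continuous functional). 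Then running~\eqref{eq: Bismut} and Lemma~\ref{lemma: sigma = 1 ou > 1}-(i) backwards, using $\H(\norma(\Tdown_t))=\mass_t^{-1+1/\gamma}\height_t$ and the self-similarity of $S'$ to rewrite $\mass_t^{-(1-1/\gamma)}\int_0^\infty\e^{-S_w'/\mass_t-cw/\height_t}\dd w$ as $\int_0^\infty\e^{-S_u-cu/\H}\dd u$ with $\H=\H(\rdtree)$, yields~\eqref{eq: convergence random leaf}.

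For the fixed-$t$ convergence, set $\epsilon=\alpha^{-(1-1/\gamma)}$ and $\widetilde S^\alpha_w:=\alpha\,S_{\epsilon w}$. A direct Laplace-transform computation (time-change and scaling of the subordinator $S$) shows that $\widetilde S^\alpha$ is again a subordinator with Laplace exponent~\eqref{eq: laplace exponent}; in particular it has the law of the restriction of $S'$. Next, applying Lemma~\ref{lemma: height is linear} to $\Tdown_t$ rooted so that its marked leaf has height $t$ gives $\height_{t-r}=\height_t-r$ for all $r$ below the (a.s.\ strictly positive) height at which the subtree of $\Tdown_t$ containing its highest leaf is grafted, and $0\le\height_{t-r}\le\height_t-r$ for all $r\le t$. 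Combining these with $\beta\alpha^{-(1-1/\gamma)}\to c$ and with $\widetilde S^\alpha_w\to S'_w$ at continuity points $w$ of $S'$, one obtains the pointwise (a.e.\ $w$) integrand convergence $(1-S_{\epsilon w}/\mass_t)^\alpha(\height_{t-\epsilon w}/\height_t)^\beta\to\e^{-S_w'/\mass_t-cw/\height_t}$ on $\{\mass_t>1\}$.

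It remains to establish the joint convergence $\widetilde S^\alpha\to S'$ with $\norma(\Tdown_t)$ and the asymptotic independence of the two limits, and then to pass from the integrand to the integral $\mathcal J_\alpha^t$. For the former I would use a cut-off at a small level $\delta>0$: the atoms $\tree_s$ of the underlying Poisson process with $s\le\delta$ — those which, once $\alpha$ is large, determine $\widetilde S^\alpha$ on any fixed $w$-window — form, conditionally on the remaining atoms, an independent Poisson process, whereas removing them from $\Tdown_t$ moves it by $\ghp$-distance at most $2\max_{s\le\delta}\H(\tree_s)+2\delta$ (up to a vanishing mass correction, via Lemma~\ref{lemma: ghp dilatation}), and $\max_{s\le\delta}\H(\tree_s)\to0$ in probability as $\delta\to0$; letting $\alpha\to\infty$ then $\delta\to0$ gives $(\norma(\Tdown_t),\mass_t,\height_t,\widetilde S^\alpha)\xrightarrow{(d)}(\norma(\Tdown_t),\mass_t,\height_t,S')$ with $S'\perp\Tdown_t$, and a Skorokhod coupling makes $\widetilde S^\alpha_w\to S'_w$ at a.e.\ $w$, a.s. For the latter, truncating the $w$-integral at a level $W$ and using bounded convergence on $[0,W]$ gives $\mathcal J_{\alpha,W}^t\to\mass_t^{-(1-1/\gamma)}\int_0^W\e^{-S_w'/\mass_t-cw/\height_t}\dd w$, while the domination $(1-S_{\epsilon w}/\mass_t)^\alpha\le\e^{-\widetilde S^\alpha_w/\mass_t}$ together with the subordinator property of $\widetilde S^\alpha$ yields $\lim_{W\to\infty}\limsup_{\alpha\to\infty}\ex{\ind{\mass_t>1}(\mathcal J_\alpha^t-\mathcal J_{\alpha,W}^t)}=0$; the standard converging-together argument (as used in the proof of Corollary~\ref{corollary: zooming mass}) then gives the claimed fixed-$t$ convergence, and hence the proposition.

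The step I expect to be the main obstacle is precisely this joint convergence of $\widetilde S^\alpha$ with $\norma(\Tdown_t)$ with asymptotic independence of the limits: one must show that the infinitesimal piece of the Poisson process near the root of $\Tdown_t$, which after rescaling produces the limiting subordinator, decouples in the Gromov--Hausdorff--Prokhorov sense from $\Tdown_t$ itself, i.e.\ that the subtrees grafted at heights $\le\delta$ become $\ghp$-negligible as $\delta\to0$, the key input being $\max_{s\le\delta}\H(\tree_s)\to0$ in probability. The remaining discrepancies (left versus right limits $S_r$ vs.\ $S_{r-}$, jump times of $\mass$, and the range of $r$ on which height-linearity fails but the bounds $0\le\height_{t-r}\le\height_t-r$ and $(1-S_{\epsilon w}/\mass_t)^\alpha\le\e^{-\widetilde S^\alpha_w/\mass_t}$ suffice) are routine and cause no difficulty.
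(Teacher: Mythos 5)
Your proof takes a genuinely different route from the paper's. The paper first establishes the general zooming theorem (Theorem~\ref{thm: zooming}) and its corollary on the convergence of rescaled grafted masses to a subordinator (Corollary~\ref{corollary: zooming mass}), then uses Lemma~\ref{lemma: convergence in probability J} to replace $\alpha^{1-1/\gamma}\H^{-\beta}Z_{\alpha,\beta}(U)$ by $I_\alpha=\alpha^{1-1/\gamma}\int_0^{\epsilon H(U)}\e^{-\alpha(1-\sigma_{r,U})-\beta r/\H}\,\dd r$, applies Corollary~\ref{corollary: zooming mass} to a truncated integral, and controls the tail via Corollary~\ref{cor: first moment Psi}. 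You instead push the whole expectation to the Bismut picture once and for all, reduce to a fixed-$t$ convergence by dominated convergence, and observe that the rescaled process $\widetilde S^\alpha_w=\alpha S_{\alpha^{-(1-1/\gamma)}w}$ is, for every $\alpha$, \emph{exactly} a stable subordinator with the target Laplace exponent $\phi(\lambda)=\gamma\lambda^{1-1/\gamma}$. This exact invariance (specific to the stable case but so is the statement) is a nice shortcut, and the $\delta$-cutoff argument you sketch for asymptotic independence of $\widetilde S^\alpha|_{[0,W]}$ from $\norma(\Tdown_t)$ is essentially the content of the paper's Lemma~\ref{lemma: approx point measure}, so the overall design is sound. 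The algebraic reductions (scaling under $R_\gamma$, $\sigma_{r,U}\leftrightarrow\mass_{t-r}=\mass_t-S_{r-}$, the self-similarity step that rewrites the limit as $\int_0^\infty\e^{-S_u-cu/\H}\,\dd u$, and the use of Lemma~\ref{lemma: height is linear} for the height factor) are all correct.

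There is, however, a genuine gap in the step you dismiss as routine, namely the tail bound
\[
\lim_{W\to\infty}\limsup_{\alpha\to\infty}\ex{\ind{\mass_t>1}\,\mass_t^{-1}\bigl(\mathcal J_\alpha^t-\mathcal J_{\alpha,W}^t\bigr)}=0 .
\]
You claim this follows from the domination $(1-S_{\epsilon w}/\mass_t)^\alpha\le\e^{-\widetilde S^\alpha_w/\mass_t}$ ``together with the subordinator property'', but the exponent involves $\mass_t=S_t$, which is a functional of the \emph{same} Poisson process that builds $\widetilde S^\alpha$; one cannot simply compute $\ex{\int_W^\infty\e^{-\widetilde S^\alpha_w/\mass_t}\,\dd w}$ by an independent-subordinator Laplace transform. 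Moreover, without the Bismut weight $\mass_t^{-1}$ the bound is borderline: optimizing $\mass_t^{-(1-1/\gamma)}\e^{-\widetilde S^\alpha_w/\mass_t}$ over $\mass_t$ gives $\asymp(\widetilde S^\alpha_w)^{-(1-1/\gamma)}$, whose $w$-integral diverges logarithmically, so the claim as you literally state it (with $\ex{\ind{\mass_t>1}(\cdot)}$ rather than $\ex{\ind{\mass_t>1}\mass_t^{-1}(\cdot)}$) is not obviously true. A correct argument does exist within your framework --- keep the Bismut weight, bound $\ind{\mass_t>1}\mass_t^{-1-(1-1/\gamma)}\e^{-\widetilde S^\alpha_w/\mass_t}\le C\bigl(1\wedge(\widetilde S^\alpha_w)^{-(2-1/\gamma)}\bigr)$ by optimizing over $\mass_t\ge1$, and then use the negative moment $\ex{\mass_1^{-(2-1/\gamma)}}<\infty$ from~\eqref{eq: tau finite moment} together with the self-similarity $\widetilde S^\alpha_w\law w^{\gamma/(\gamma-1)}\widetilde S^\alpha_1$ to get $\int_W^\infty w^{-(2\gamma-1)/(\gamma-1)}\,\dd w\to0$ --- but none of this is in your sketch, and it is a different computation from the one the paper does via Corollary~\ref{cor: first moment Psi}. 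You identify the asymptotic independence as the main obstacle; in fact that step is in good shape, and it is the tail estimate that needs to be written out.
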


\begin{proof}
	Set 
	\begin{equation}\label{eq: definition epsilon}
		\epsilon = \epsilon(\alpha) \coloneqq \alpha^{(\delta-1)(1-1/\gamma)}	\end{equation} 
	with $\delta \in (0, 1/3)$ so that $\epsilon \to 0$ as $\alpha \to \infty$. Define
		\begin{equation}\label{eq: definition I}
		I_\alpha \coloneqq \alpha^{1-1/\gamma} \int_0^{\epsilon H(U)} \e^{-\alpha(1-\sigma_{r,U})-\beta r/\H}\, \dd r.
		\end{equation}
\begin{lemma}\label{lemma: convergence in probability J}
	We have the following convergence in $\excm{1}$-probability
	\begin{equation*}
		 \lim_{\alpha \to \infty}\left(\alpha^{1-1/\gamma}\H^{-\beta}Z_{\alpha,\beta}(U)-I_\alpha\right) = 0.
	\end{equation*}
\end{lemma}

The proof is postponed to Section \ref{section: remainder}. Using this together with Slutsky's theorem, it is clear that the proof of \eqref{eq: convergence random leaf} reduces to showing the following convergence in distribution
\begin{equation}\label{eq: convergence of I}
	\left(\rdtree,H(U),I_\alpha\right) \xrightarrow[\alpha \to \infty]{(d)} \left(\rdtree, H(U),\int_0^\infty \e^{-S_t-ct/\H}\,\dd t\right).
\end{equation}

Making the change of variable $t=\alpha^{1-1/\gamma}r$, notice that
\begin{equation}
	I_\alpha = \int_0^{ \alpha^{1-1/\gamma}\epsilon H(U)} \expp{-\alpha \left(1-\sigma_{\alpha^{-1+1/\gamma}t,U}\right)-\beta \alpha^{-1+1/\gamma} t/\H}\, \dd t,
\end{equation}
Let $A>0$. Notice that, applying Corollary \ref{corollary: zooming mass}, we get the following convergence in distribution
\begin{equation}\label{eq: truncated at A 1}
	\left(\rdtree, H(U), \left(\sum_{h_i \leqslant \alpha^{-1+1/\gamma} t \wedge \epsilon H(U)} \alpha\sigma_i, \, 0\leqslant t \leqslant A\right)\right) \xrightarrow[\alpha \to \infty]{(d)} \left(\rdtree,H(U),\left(S_t, \, 0\leqslant t \leqslant A\right)\right),
\end{equation}
where $S$ is a subordinator with Laplace exponent $\phi$, independent of $(\rdtree, H(U))$. Moreover, on the event $\Omega_\alpha \coloneqq \{\alpha^{-1+1/\gamma}A\leqslant \epsilon H(U) \}$, we have for every $t \in [0,A]$
\begin{equation}\label{eq: truncated at A 2}
	 \sum_{h_i \leqslant \alpha^{-1+1/\gamma} t \wedge \epsilon H(U)} \sigma_i =  \sum_{h_i \leqslant \alpha^{-1+1/\gamma} t} \sigma_i = 1- \sigma_{\alpha^{-1+1/\gamma} t, U}.
\end{equation}
Since $\alpha^{1-1/\gamma} \epsilon \to \infty$, it is clear that $\lim_{\alpha \to \infty}\excm{1}[\Omega_\alpha] = 1$. Thus, it follows from \eqref{eq: truncated at A 1} and \eqref{eq: truncated at A 2} that
\begin{equation*}
	\left(\rdtree,H(U),\left( \alpha \left(1- \sigma_{\alpha^{-1+1/\gamma} t, U}\right), \, 0\leqslant t \leqslant A\right)\right) \xrightarrow[\alpha \to \infty]{(d)} \left(\rdtree,H(U),\left(S_t, \, 0\leqslant t \leqslant A\right)\right).
\end{equation*}
Now a simple application of the continuous mapping theorem gives
\begin{equation}\label{eq: convergence of I truncated}
	\left(\rdtree,H(U),\int_0^{A}\expp{-\alpha \left(1-\sigma_{\alpha^{-1+1/\gamma}t,U}\right)-\beta \alpha^{-1+1/\gamma} t/\H}\, \dd t \right)  \xrightarrow[\alpha \to \infty]{(d)} \left(\rdtree,H(U),\int_0^{A}\e^{-S_t - ct/\H}\, \dd t\right).
\end{equation}

On the other hand, applying \eqref{eq: first moment Psi} with $f(T)= \e^{-\alpha(1-\mu(T))}$ and $g(r) = \ind{r\geqslant \alpha^{-1+1/\gamma}A}$, we get
\begin{multline*}
	\excm{1}\left[\int_A^{\alpha^{1-1/\gamma}\epsilon H(U)}\expp{-\alpha \left(1-\sigma_{\alpha^{-1+1/\gamma}t,U}\right)-\beta \alpha^{-1+1/\gamma} t/\H}\, \dd t \right]\\
	\begin{aligned}
		&\leqslant\alpha^{1-1/\gamma}\excm{1}\left[\int_{\alpha^{-1+1/\gamma}A}^{H(U)}\expp{-\alpha \left(1-\sigma_{r,U}\right)}\, \dd r \right] \\
		&= \frac{\alpha^{1-1/\gamma}}{\gamma\Gamma(1-1/\gamma)} \int_0^1 x^{-1/\gamma} (1-x)^{-1/\gamma} \e^{-\alpha x} \excm{1}\left[\left(\alpha x\right)^{1-1/\gamma} H(U)\geqslant A\right]\, \dd x\\
		&= \frac{1}{\gamma\Gamma(1-1/\gamma)} \int_0^\alpha y^{-1/\gamma} \left(1-\frac{y}{\alpha}\right)^{-1/\gamma} \e^{- y} \excm{1}\left[y^{1-1/\gamma} H(U)\geqslant A\right]\, \dd y.
	\end{aligned}
\end{multline*}

By the dominated convergence theorem, we have
\begin{multline*}
	\lim_{\alpha \to \infty} \int_0^{\alpha/2} y^{-1/\gamma} \left(1-\frac{y}{\alpha}\right)^{-1/\gamma} \e^{- y} \excm{1}\left[y^{1-1/\gamma} H(U)\geqslant A\right]\, \dd y \\
	= \int_0^\infty y^{-1/\gamma} \e^{- y} \excm{1}\left[y^{1-1/\gamma} H(U)\geqslant A\right]\, \dd y.
\end{multline*}
Moreover, we have
\begin{align*}
	\int_{\alpha/2}^\alpha y^{-1/\gamma} \left(1-\frac{y}{\alpha}\right)^{-1/\gamma} \e^{- y} \excm{1}\left[y^{1-1/\gamma} H(U)\geqslant A\right]\, \dd y &\leqslant \e^{-\alpha/2} \int_{\alpha/2}^\alpha y^{-1/\gamma} \left(1-\frac{y}{\alpha}\right)^{-1/\gamma} \, \dd y \\
	&= \alpha^{1-1/\gamma} \e^{-\alpha/2} \int_{1/2}^1 z^{-1/\gamma} (1-z)^{-1/\gamma} \, \dd z,
\end{align*}
where the last term converges to $0$ as $\alpha \to \infty$. We deduce that
\begin{multline*}
	\limsup_{\alpha \to \infty}\excm{1}\left[\int_A^{\alpha^{1-1/\gamma}\epsilon H(U)}\expp{-\alpha \left(1-\sigma_{\alpha^{-1+1/\gamma}t,U}\right)-\beta \alpha^{-1+1/\gamma} t/\H}\, \dd t \right]\\
		\leqslant\frac{1}{\gamma\Gamma(1-1/\gamma)} \int_0^\infty y^{-1/\gamma} \e^{- y} \excm{1}\left[y^{1-1/\gamma} H(U)\geqslant A\right]\, \dd y,
\end{multline*}
and, thanks to the dominated convergence theorem,
\begin{equation}\label{eq: remainder sequence}
	\lim_{A\to \infty}\limsup_{\alpha \to \infty}\excm{1}\left[\int_A^{\alpha^{1-1/\gamma}\epsilon H(U)}\expp{-\alpha \left(1-\sigma_{\alpha^{-1+1/\gamma}t,U}\right)-\beta \alpha^{-1+1/\gamma} t/\H}\, \dd t \right] = 0.
\end{equation}

Combining \eqref{eq: convergence of I truncated} and \eqref{eq: remainder sequence} and applying \cite[Theorem 3.2]{billingsley1999convergence}, \eqref{eq: convergence of I} readily follows. This finishes the proof.
\end{proof}

The next lemma, whose proof is postponed to Section \ref{section: convergence in proba}, states that taking a leaf uniformly at random or taking the average over all leaves yields the same limiting behavior for $Z_{\alpha,\beta}(x)$. Recall from \eqref{eq: the functional on a Levy tree} the definition of $\Z_{\alpha,\beta}$.
\begin{lemma}\label{lemma: convergence in proba}
	Under the assumptions of Theorem \ref{prop: subcritical case}, we have the convergence in $\excm{1}$-probability
	\begin{equation}
		\lim_{\alpha \to \infty}\alpha^{1-1/\gamma}\H^{-\beta} \left(Z_{\alpha,\beta}(U)-\Z_{\alpha,\beta}\right) =0.
	\end{equation}
\end{lemma}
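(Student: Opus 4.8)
The plan is to compare the average $\Z_{\alpha,\beta} = \int_\rdtree Z_{\alpha,\beta}(x)\,\mu(\dd x)$ with the single-leaf value $Z_{\alpha,\beta}(U)$ by decomposing both according to where the ancestral line of the argument splits off from $\llbracket\root,x^*\rrbracket$, where $x^*$ is the (a.s.\ unique) highest leaf of $\rdtree$. Fix a small $\epsilon>0$ and write $y_\epsilon := x^*_\epsilon$ for the ancestor of $x^*$ at height $\epsilon$, and let $A_\epsilon := \{x\in\rdtree : H(x\wedge x^*)\geqslant \epsilon\} = \rdtree_{\epsilon,x^*}$. The key structural observation, following point (c) of the discussion preceding the lemma, is that for every leaf $x\in A_\epsilon$ and every $r\leqslant\epsilon$ we have $\rdtree_{r,x}=\rdtree_{r,x^*}$, hence $\sigma_{r,x}=\sigma_{r,x^*}$ and, by Lemma \ref{lemma: height is linear}, $\H_{r,x}=\H-r$; consequently the part of $Z_{\alpha,\beta}(x)$ coming from $r\in[0,\epsilon]$ equals $\int_0^\epsilon \sigma_{r,x^*}^\alpha(\H-r)^\beta\,\dd r$, which is \emph{the same} for all $x\in A_\epsilon$ (and for $x^*$ itself).

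First I would show that the contribution of $r\in[\epsilon,H(x)]$ to $\alpha^{1-1/\gamma}\H^{-\beta}Z_{\alpha,\beta}(x)$ is negligible, uniformly enough. Since $\H_{r,x}\leqslant\H$ and $\sigma_{r,x}\leqslant\sigma_{\epsilon,x^*} < 1$ for $r\geqslant\epsilon$ and $x\in A_\epsilon$, one bounds $\int_\epsilon^{H(x)}\sigma_{r,x}^\alpha\H_{r,x}^\beta\,\dd r \leqslant \H\,\sigma_{\epsilon,x^*}^{\alpha-\alpha_0}\int_0^{H(x)}\sigma_{r,x}^{\alpha_0}\,\dd r$ for a fixed $\alpha_0$; integrating over $x$ and using Corollary \ref{lemma: second moment} (or \eqref{eq: second moment}) to control $\int_\rdtree\mu(\dd x)\int_0^{H(x)}\sigma_{r,x}^{\alpha_0}\,\dd r$, together with the fact that $\sigma_{\epsilon,x^*}<1$ a.s.\ so $\sigma_{\epsilon,x^*}^{\alpha}\to 0$, shows $\alpha^{1-1/\gamma}\H^{-\beta}\int_\rdtree\mu(\dd x)\int_\epsilon^{H(x)}\sigma_{r,x}^\alpha\H_{r,x}^\beta\,\dd r\to 0$ in probability; the analogous bound with $x$ replaced by $U$ handles $Z_{\alpha,\beta}(U)$ (using point (a): $\excm{1}(H(U\wedge x^*)<\epsilon)\to 0$ as can be extracted from the known law of $H(U)$ via Lemma \ref{lemma: height integrable}, or more simply from the fact that $\mu(\rdtree\setminus A_\epsilon)\to 0$ a.s.\ as $\epsilon\to 0$). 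After this reduction, both $\alpha^{1-1/\gamma}\H^{-\beta}Z_{\alpha,\beta}(U)$ and $\alpha^{1-1/\gamma}\H^{-\beta}\Z_{\alpha,\beta}$ are, up to a vanishing error, equal to $\mu(A_\epsilon)\cdot\alpha^{1-1/\gamma}\int_0^\epsilon\sigma_{r,x^*}^\alpha(\H-r)^\beta\H^{-\beta}\,\dd r$ — the same quantity — plus, for the average, the contribution of leaves $x\notin A_\epsilon$, which must also be shown negligible (point (b)): here $H(x\wedge x^*)<\epsilon$, and one estimates $\alpha^{1-1/\gamma}\H^{-\beta}\int_{\rdtree\setminus A_\epsilon}\mu(\dd x)Z_{\alpha,\beta}(x)$ crudely by $\H\,\alpha^{1-1/\gamma}\int_{\rdtree\setminus A_\epsilon}\mu(\dd x)\int_0^{H(x)}\sigma_{r,x}^\alpha\,\dd r$ and again invokes the $L^2$ bound of Corollary \ref{lemma: second moment} plus $\mu(\rdtree\setminus A_\epsilon)\to 0$.

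Finally I would assemble these pieces with a standard three-$\epsilon$ argument: for any $\eta>0$, choose $\epsilon$ small so that the $\epsilon$-dependent error terms above are $<\eta$ with probability $>1-\eta$ uniformly in large $\alpha$, then let $\alpha\to\infty$; the common main term cancels in the difference $Z_{\alpha,\beta}(U)-\Z_{\alpha,\beta}$, leaving only errors that are controlled. The main obstacle I anticipate is making the "uniformly in $\alpha$" control of the tail $r\in[\epsilon,H(x)]$ genuinely uniform: one needs a moment bound on $\int_0^{H(x)}\sigma_{r,x}^{\alpha}\,\dd r$ that does not blow up as $\alpha\to\infty$, and the point of Corollary \ref{lemma: second moment} is precisely that $\alpha^{2-2/\gamma}\excm{1}[\int_\rdtree\mu(\dd x)(\int_0^{H(x)}\sigma_{r,x}^\alpha\,\dd r)^2]$ stays bounded — so that after multiplying by $\alpha^{1-1/\gamma}$ and by the decaying factor $\sigma_{\epsilon,x^*}^{\alpha-\alpha_0}$ (which beats any polynomial in $\alpha$ since $\sigma_{\epsilon,x^*}<1$ a.s.), the tail truly vanishes. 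Care is also needed because $\sigma_{\epsilon,x^*}$ is random and can be close to $1$ on an event of small probability; this is handled by first conditioning on $\{\sigma_{\epsilon,x^*}\leqslant 1-\delta\}$ for a further small parameter $\delta$ and absorbing the complement into the probability error.
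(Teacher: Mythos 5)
Your route is close to the paper's in spirit but structured differently: you fix a small $\epsilon>0$ and let $\alpha\to\infty$ first, then $\epsilon\to 0$ in a three-$\epsilon$ argument, whereas the paper takes an $\alpha$-dependent truncation level $\epsilon(\alpha)=\alpha^{(\delta-1)(1-1/\gamma)}\to 0$ (with $\delta\in(0,1/2)$) and handles the tail $r\geqslant\epsilon(\alpha)$ by invoking a law-of-the-iterated-logarithm lower bound for the subordinator $S$ from Bertoin (via \eqref{eq: simple desintigration argument 0} and its analogue \eqref{eq: simple disintegration argument}). Your fixed-$\epsilon$ version is appealing because for fixed $\epsilon$ the inequality $\sigma_{\epsilon,x^*}<1$ holds almost surely and gives geometric decay in $\alpha$ for the tail $\int_\epsilon^{H(x)}$, sidestepping the LIL entirely; both approaches, however, ultimately rest on the uniform $L^2$ bound of Corollary \ref{lemma: second moment} for the uniform-in-$\alpha$ control.

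The genuine gap is in the cancellation claim. You write that, after truncating to $r\leqslant\epsilon$ and discarding $x\notin A_\epsilon$, both $\alpha^{1-1/\gamma}\H^{-\beta}Z_{\alpha,\beta}(U)$ and $\alpha^{1-1/\gamma}\H^{-\beta}\Z_{\alpha,\beta}$ reduce to $\mu(A_\epsilon)\cdot\alpha^{1-1/\gamma}\H^{-\beta}\int_0^\epsilon\sigma_{r,x^*}^\alpha(\H-r)^\beta\,\dd r$, so the main term cancels exactly in the difference. That is wrong for the single-leaf quantity: on the (high-probability) event $U\in A_\epsilon$, the truncated $Z_{\alpha,\beta}(U)$ equals $\int_0^\epsilon\sigma_{r,x^*}^\alpha(\H-r)^\beta\,\dd r$ with no factor $\mu(A_\epsilon)$, while the truncated average carries the factor $\mu(A_\epsilon)$. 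The difference therefore leaves the residue
\begin{equation*}
	\left(1-\mu(A_\epsilon)\right)\,\alpha^{1-1/\gamma}\H^{-\beta}\int_0^\epsilon\sigma_{r,x^*}^\alpha(\H-r)^\beta\,\dd r,
\end{equation*}
which is not zero and not obviously small: $1-\mu(A_\epsilon)\to 0$ as $\epsilon\to 0$, but the other factor is precisely the quantity converging in distribution to a nontrivial limit, so you need it to be \emph{tight uniformly in $\alpha$} before you can argue the product is small. The paper supplies exactly this via a Markov-inequality estimate using the identity $Z_{\alpha,\beta}^\epsilon(x^*)=Z_{\alpha,\beta}^\epsilon(x)$ for $x\in A_\epsilon$ together with Corollary \ref{lemma: second moment} (its $B_\alpha^1$ step, see \eqref{eq: tight 1}). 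You already have all the ingredients to add that step, but as written the plan is missing it. (Two further minor slips: the displayed tail bound should have $\H^\beta$ rather than $\H$, and in the $x\notin A_\epsilon$ estimate the prefactor should be $1$ rather than $\H$ since $\H_{r,x}^\beta\H^{-\beta}\leqslant 1$.)
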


Combining Proposition \ref{prop: subcritical case} and Lemma \ref{lemma: convergence in proba}, we get the following result using Slutsky's theorem.
\begin{thm}\label{thm: subcritical case}
	Assume that $\alpha \to \infty$, $\beta \geqslant 0$ and $\beta/\alpha^{1-1/\gamma} \to c \in [0,\infty)$. Let $\rdtree$ be the stable tree with branching mechanism $\psi(\lambda) = \lambda^\gamma$ where $\gamma \in (1,2]$. Conditionally on $\rdtree$, let $U$ be a $\rdtree$-valued random variable with distribution $\mu$ under $\excm{1}$. Then we have the following convergence in distribution
	\begin{multline}
		\left(\rdtree,H(U),\alpha^{1-1/\gamma} \H^{-\beta}Z_{\alpha,\beta}(U),\alpha^{1-1/\gamma}\H^{-\beta}\Z_{\alpha,\beta}\right) \\
		\xrightarrow[\alpha \to \infty]{(d)} \left(\rdtree, H(U),\int_0^\infty \e^{-S_t-ct/\H}\,\dd t,\int_0^\infty \e^{-S_t-ct/\H}\,\dd t\right),
	\end{multline}
where $S$ is a stable subordinator with Laplace exponent $\phi$ given by \eqref{eq: laplace exponent}, independent of $(\rdtree,H(U))$.
\end{thm}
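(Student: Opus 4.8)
The plan is to obtain Theorem \ref{thm: subcritical case} as an immediate consequence of Proposition \ref{prop: subcritical case} and Lemma \ref{lemma: convergence in proba}, via a converging-together (Slutsky-type) argument combined with the continuous mapping theorem; no new estimate is required.

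Set $X_\alpha = \alpha^{1-1/\gamma}\H^{-\beta}Z_{\alpha,\beta}(U)$ and $Y_\alpha = \alpha^{1-1/\gamma}\H^{-\beta}\Z_{\alpha,\beta}$, and write $L = \int_0^\infty \e^{-S_t - ct/\H}\,\dd t$ for the limiting random variable, where $S$ is a subordinator with Laplace exponent $\phi$ given by \eqref{eq: laplace exponent}, independent of $(\rdtree,H(U))$. Proposition \ref{prop: subcritical case} gives the convergence in distribution $(\rdtree, H(U), X_\alpha) \to (\rdtree, H(U), L)$ in $\T \times [0,\infty) \times [0,\infty)$, while Lemma \ref{lemma: convergence in proba} gives $X_\alpha - Y_\alpha \to 0$ in $\excm{1}$-probability. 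First I would note that a sequence converging in probability to a constant converges jointly in distribution with any convergent sequence (see e.g. \cite[Theorem 3.2]{billingsley1999convergence}), so that
\[
  \left(\rdtree, H(U), X_\alpha, X_\alpha - Y_\alpha\right) \xrightarrow[\alpha \to \infty]{(d)} \left(\rdtree, H(U), L, 0\right)
\]
in $\T \times [0,\infty) \times [0,\infty) \times \real$ (all spaces involved being Polish). Then I would apply the continuous mapping theorem to the continuous map $(\mathcal{T}, h, u, v) \mapsto (\mathcal{T}, h, u, u - v)$: it sends the left-hand side to $(\rdtree, H(U), X_\alpha, Y_\alpha)$ and the right-hand side to $(\rdtree, H(U), L, L)$, which is precisely the claimed convergence. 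The fact that the third and fourth coordinates of the limit coincide — the same copy of $L$, not two independent copies — is exactly what the vanishing of $X_\alpha - Y_\alpha$ in probability buys us, and it is consistent with the statement.

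I do not expect any obstacle in this final step: the substance of the theorem lies entirely in Proposition \ref{prop: subcritical case} and Lemma \ref{lemma: convergence in proba}. Proposition \ref{prop: subcritical case} rests on reducing $\alpha^{1-1/\gamma}\H^{-\beta}Z_{\alpha,\beta}(U)$ to the truncated integral $I_\alpha$ (Lemma \ref{lemma: convergence in probability J}), on the convergence of the rescaled grafted-mass process to the subordinator $S$ (Corollary \ref{corollary: zooming mass}, hence ultimately Theorem \ref{thm: zooming}), and on a tail/uniform-integrability control of the remaining part of the integral through the first-moment formula of Corollary \ref{cor: first moment Psi}. Lemma \ref{lemma: convergence in proba}, proved in Section \ref{section: convergence in proba}, is the genuinely delicate point: it makes rigorous the assertion that averaging $Z_{\alpha,\beta}(x)$ over all leaves has the same asymptotics as evaluating it at a single uniform leaf, via the $L^2$ bound of Corollary \ref{lemma: second moment} and the fact that a $\mu$-random leaf lies at height bounded away from $0$ with high probability, as quantified by Lemma \ref{lemma: height integrable}. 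Once these are available, the Slutsky argument above closes the proof in a single line.
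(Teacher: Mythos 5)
Your proposal is correct and is exactly the paper's argument: the paper derives Theorem \ref{thm: subcritical case} by combining Proposition \ref{prop: subcritical case} and Lemma \ref{lemma: convergence in proba} via Slutsky's theorem, which is precisely the converging-together plus continuous-mapping step you spell out. Your more detailed justification of why the third and fourth limit coordinates are the \emph{same} copy of $L$ is a faithful (and slightly more explicit) rendering of what the paper leaves implicit.
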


\section{Asymptotic behavior of $\Z_{\alpha,\beta}$ in the case $\beta/\alpha^{1-1/\gamma} \to \infty$}
\label{section: supercritical}
We treat the case $\beta/\alpha^{1-1/\gamma} \to \infty$. Intuitively, this assumption guarantees that $\H_{r,x}^\beta$ dominates $\sigma_{r,x}^\alpha$, thus we get a different asymptotic behavior and there is no longer a subordinator in the limit.
\begin{thm}\label{thm: supercritical case}
	Assume that $\beta \to \infty$, $\alpha \geqslant 0$ and $\alpha^{1-1/\gamma}/\beta \to 0$. Let $\rdtree$ be the stable tree with branching mechanism $\psi(\lambda) = \lambda^\gamma$ where $\gamma \in (1,2]$. Then we have the following convergence in $\excm{1}$-probability
	\begin{equation}
		\lim_{\beta \to \infty} \beta\H^{-\beta}\Z_{\alpha,\beta}= \H.
	\end{equation}
Furthermore, if $\alpha^{1-1/\gamma}/\beta^{\rho}\to 0$ for some $\rho \in (0,1)$, then the convergence holds $\excm{1}$-almost surely.
\end{thm}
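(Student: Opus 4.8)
The plan is to sandwich $\beta\H^{-\beta}\Z_{\alpha,\beta}$ between a deterministic upper bound and a lower bound obtained by zooming in at the root along the ancestral line of the highest leaf. For the upper bound, note that for every leaf $x\in\rdtree$ and every $r\in[0,H(x)]$ one has $\sigma_{r,x}\leqslant\sigma=1$ and, by Lemma \ref{lemma: height is linear}, $\H_{r,x}\leqslant\H-r$; hence, using $\mu(\rdtree)=1$,
\[
\beta\H^{-\beta}\Z_{\alpha,\beta}\leqslant\beta\int_\rdtree\mu(\dd x)\int_0^{H(x)}\big(1-r/\H\big)^{\beta}\,\dd r\leqslant\frac{\beta}{\beta+1}\,\H,
\]
so that $\limsup_{\beta\to\infty}\beta\H^{-\beta}\Z_{\alpha,\beta}\leqslant\H$ pointwise. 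Only the matching lower bound requires work.

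For the lower bound, let $x^*$ be the $\excm{1}$-a.s.\ unique leaf with $H(x^*)=\H$. Given $c\in(0,\H)$, I would restrict the integral defining $\Z_{\alpha,\beta}$ to leaves $x\in\rdtree_{c,x^*}$ and to $r\in[0,c]$: for such $x$ and $r$ we have $x_r=x^*_r$, hence $\rdtree_{r,x}=\rdtree_{r,x^*}$, so $\sigma_{r,x}=\sigma_{r,x^*}\geqslant\sigma_{c,x^*}$ and $\H_{r,x}=\H-r$ by Lemma \ref{lemma: height is linear}; since $\mu(\rdtree_{c,x^*})=\sigma_{c,x^*}$ this gives
\[
\beta\H^{-\beta}\Z_{\alpha,\beta}\;\geqslant\;\sigma_{c,x^*}^{\,\alpha+1}\,\frac{\beta}{\beta+1}\,\H\,\Big(1-\big(1-c/\H\big)^{\beta+1}\Big).
\]
The whole point is then to choose $c=c_\beta\to0$ so that \emph{both} $\beta c_\beta\to\infty$ (which makes $(1-c_\beta/\H)^{\beta+1}\to0$) and $\alpha(1-\sigma_{c_\beta,x^*})\to0$ (which makes $\sigma_{c_\beta,x^*}^{\alpha+1}\to1$, via $|\log\sigma|\leqslant2(1-\sigma)$ near $\sigma=1$). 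Since $1-\sigma_{c,x^*}$ is, near the root, of order $c^{\gamma/(\gamma-1)}$, one needs $\beta^{-1}\ll c_\beta\ll\alpha^{-1+1/\gamma}$, and the assumption $\alpha^{1-1/\gamma}/\beta\to0$ is precisely what makes this interval nonempty.

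For the convergence in probability, the input I would use is the tightness of $c^{-\gamma/(\gamma-1)}(1-\sigma_{c,x^*})$ as $c\to0$: applying Corollary \ref{corollary: zooming mass} with $\f(\epsilon)=\epsilon^{2/3}$ and evaluating the limiting subordinator at time $1$ gives $\epsilon^{-\gamma/(\gamma-1)}(1-\sigma_{\epsilon,U})\xrightarrow{(d)}S_1$ for a uniform leaf $U$, and since $\sigma_{\epsilon,U}=\sigma_{\epsilon,x^*}$ on $\{H(U\wedge x^*)\geqslant\epsilon\}$ — an event of probability tending to $1$, as $H(U\wedge x^*)>0$ a.s.\ (the root being a simple point of $\rdtree$) — the same holds with $x^*$, whence tightness. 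Taking $c_\beta=\beta^{-1/2}(1+\alpha)^{-(1-1/\gamma)/2}$ one checks $\beta c_\beta\to\infty$ and $\alpha c_\beta^{\gamma/(\gamma-1)}\leqslant(\alpha^{1-1/\gamma}/\beta)^{\gamma/(2(\gamma-1))}\to0$, so $\alpha(1-\sigma_{c_\beta,x^*})=\big(\alpha c_\beta^{\gamma/(\gamma-1)}\big)\big(c_\beta^{-\gamma/(\gamma-1)}(1-\sigma_{c_\beta,x^*})\big)$, being a null deterministic sequence times a tight one, tends to $0$ in probability. The lower bound above then tends to $\H$ in probability, which together with the upper bound finishes this part.

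For the almost sure statement under $\alpha^{1-1/\gamma}/\beta^{\rho}\to0$ with $\rho<1$, the upper bound is automatic, and taking $c_\beta=\beta^{-\rho'}$ with $\rho<\rho'<1$ (so $\beta c_\beta\to\infty$) the lower bound reduces to proving $\beta^{\rho\gamma/(\gamma-1)}(1-\sigma_{c_\beta,x^*})\to0$ a.s.\ (using $\alpha=o(\beta^{\rho\gamma/(\gamma-1)})$). The key ingredient (and, I expect, the main difficulty) is the moment bound
\[
\excm{1}\big[(1-\sigma_{c,x^*})^{p}\big]\leqslant C_p\,c^{\,p\gamma/(\gamma-1)},\qquad c\in(0,1),
\]
for some fixed $p\in(0,1-1/\gamma)$: for a uniform leaf it comes out of Corollary \ref{cor: first moment Psi} by a Beta-function computation (for such small $p$ the typical order $c^{\gamma/(\gamma-1)}$ beats the ``one large subtree'' tail), and it must be transferred to the highest leaf $x^*$ via the spinal decomposition of $\rdtree$ along $\llbracket\root,x^*\rrbracket$. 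Granting it, Markov's inequality makes $\excm{1}\big(\beta_n^{\rho\gamma/(\gamma-1)}(1-\sigma_{\beta_n^{-\rho'},x^*})>\eta\big)$ bounded by a constant times $\beta_n^{\,p\gamma(\rho-\rho')/(\gamma-1)}$, summable along $\beta_n=2^n$ because $\rho'>\rho$; Borel--Cantelli, together with the monotonicity of $c\mapsto1-\sigma_{c,x^*}$ and the polynomial growth of $\alpha(\cdot)$, then upgrades this to the continuous limit $\beta\to\infty$, so that $\sigma_{c_\beta,x^*}^{\alpha+1}\to1$ a.s.\ and the lower bound tends to $\H$ a.s. Everything in this argument other than the transfer of the moment bound to $x^*$ is either an elementary inequality or an immediate consequence of the zooming-in results of Section \ref{section: zooming}.
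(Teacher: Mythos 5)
Your route differs substantially from the paper's. Instead of decomposing $\beta\H^{-\beta}\Z_{\alpha,\beta}$ into a main term $E_\beta$ plus four error terms $F_\beta^1,\ldots,F_\beta^4$ — each requiring a separate estimate to compare $\H_{r,x}^\beta$ with $(1-r/\H)^\beta$ and then with $\e^{-\beta r/\H}$ — you restrict the lower bound from the outset to $x\in\rdtree_{c,x^*}$ and $r\leqslant c$, where Lemma \ref{lemma: height is linear}(ii) gives $\H_{r,x}=\H-r$ \emph{exactly}, and you pair this with the pointwise deterministic upper bound $\tfrac{\beta}{\beta+1}\H$. This collapses the convergence-in-probability statement to the single claim $(\alpha+1)(1-\sigma_{c_\beta,x^*})\to 0$ in probability, and your tightness argument for $c^{-\gamma/(\gamma-1)}(1-\sigma_{c,x^*})$ via Corollary \ref{corollary: zooming mass} (identifying $\sigma_{c,U}$ with $\sigma_{c,x^*}$ on $\{H(U\wedge x^*)\geqslant c\}$, a high-probability event) is correct. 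This is a genuinely cleaner route to the first half of the theorem than the one in the paper.

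However, the almost-sure half has a real gap that you yourself flag: the moment bound
$\excm{1}\big[(1-\sigma_{c,x^*})^p\big]\leqslant C_p c^{p\gamma/(\gamma-1)}$
is stated for the \emph{highest} leaf $x^*$, but you only sketch a derivation for a \emph{uniform} leaf $U$, and the transfer to $x^*$ is asserted without proof. This transfer is not trivial: the Bismut decomposition (Theorem \ref{thm: Bismut}) and its consequences, including Corollary \ref{cor: first moment Psi}, govern the spine to a $\mu$-random leaf, not to $x^*$; the conditioning that the leaf be the highest changes the law of the spine (one would need something like a Williams decomposition, which the paper does not invoke), and a crude pass from $U$ to $x^*$ via $\excm{1}[\mu(\{x:H(x\wedge x^*)<c\})]$ requires a quantitative rate for that quantity that you also have not established. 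The paper circumvents this entirely: for the a.s.\ statement it never needs any estimate along the spine to $x^*$. It keeps $E_\beta$ in the form $\int_\rdtree\ind{H(x)\geqslant 2\epsilon}\sigma_{\epsilon,x}^\alpha\,\mu(\dd x)$, uses Lemma \ref{lemma: sigma = 1 ou > 1} and Bismut to represent this as an expectation over the subordinator $S$, applies the law of the iterated logarithm (\cite[Ch.~III, Thm.~9]{bertoin1996levy}) to get $\excm{1}$-a.s.\ $\mu(\dd x)$-a.s.\ convergence of the integrand, and then dominated convergence in $x$ finishes the job. To salvage your sandwich for the a.s.\ part you would either have to supply the Williams-spine analogue of \eqref{eq: identity process epsilon}, or abandon the highest-leaf lower bound in favour of the paper's integrate-over-$\mu$ formulation; as written, the a.s.\ half is not proved.
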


\begin{proof}We start by assuming that $\alpha \to \infty$ and $\alpha^{1-1/\gamma}/\beta \to 0$ (the case $\alpha$ bounded from above is covered by the second part of the theorem). Setting $\epsilon = (\alpha^{1-1/\gamma}\beta)^{-1/2}$, it is straightforward to check that $\epsilon \to 0$, $\beta \epsilon \to \infty$ and $\alpha^{1-1/\gamma} \epsilon \to 0$. Write 
	\begin{equation}\label{eq: decomposition E F}\beta\H^{-\beta}\Z_{\alpha,\beta}=E_\beta+\sum_{i=1}^4 F_\beta^{i}
	\end{equation} 
	where
	\begin{align*}
		F_\beta^1 &= \beta\int_\rdtree \ind{H(x)<2\epsilon}\,\mu(\dd x)\int_0^{H(x)} \sigma_{r,x}^\alpha \left(\frac{\H_{r,x}}{\H}\right)^\beta \, \dd r,\\
		F_\beta^2 &= \beta\int_\rdtree \ind{H(x)\geqslant 2\epsilon}\,\mu(\dd x) \int_\epsilon^{H(x)}\sigma_{r,x}^\alpha \left(\frac{\H_{r,x}}{\H}\right)^\beta \, \dd r,\\
		F_\beta^3 &= \beta\int_\rdtree \ind{H(x)\geqslant 2\epsilon}\,\mu(\dd x) \int_0^{\epsilon} \sigma_{r,x}^\alpha \left[\left(\frac{\H_{r,x}}{\H}\right)^\beta -\left(1-\frac{r}{\H}\right)^\beta \right]\, \dd r, \\
		F_\beta^4 &= \beta\int_\rdtree \ind{H(x)\geqslant 2\epsilon}\,\mu(\dd x) \int_0^{\epsilon} \sigma_{r,x}^\alpha \left[\left(1-\frac{r}{\H}\right)^\beta -\e^{-\beta r/\H}\right]\, \dd r,\\
		E_\beta &= \beta\int_\rdtree \ind{H(x)\geqslant 2\epsilon}\,\mu(\dd x) \int_0^{\epsilon}\sigma_{r,x}^\alpha \e^{-\beta r/\H}\, \dd r.
	\end{align*}
	We shall prove that $\lim_{\beta \to \infty}F_\beta^{i}= 0$ in $\excm{1}$-probability for every $i \in \{1,2,3,4\}$.
	
	Let $p\in (1,2)$. Using that $\sigma_{r,x} \leqslant 1$ and $\H_{r,x}\leqslant \H$ and applying the Markov inequality, it is clear that
	\begin{equation*}
		F_\beta^1  \leqslant 2\beta \epsilon \int_\rdtree \ind{H(x)< 2\epsilon}\, \mu(\dd x)\leqslant 2^{1+p}\beta\epsilon^{1+p} \int_\rdtree H(x)^{-p}\,\mu(\dd x).
	\end{equation*}
	Since the last integral has a finite first moment by Lemma \ref{lemma: height integrable} and $\beta\epsilon^{1+p}\to 0$, we deduce that $\excm{1}$-a.s. $\lim_{\beta \to \infty} F_\beta^1 = 0$.

	Next, using \eqref{eq: height is sublinear}, we get
	\begin{align}\label{eq: F2}
		F_\beta^2 &= \beta\int_\rdtree \ind{H(x)\geqslant 2\epsilon}\,\mu(\dd x) \int_\epsilon^{H(x)}\sigma_{r,x}^\alpha \left(\frac{\H_{r,x}}{\H}\right)^\beta \, \dd r \nonumber \\
		&\leqslant \beta \left(1-\frac{\epsilon}{\H}\right)^\beta\int_\rdtree \mu(\dd x)\int_0^{H(x)}\sigma_{r,x}^\alpha \, \dd r.
	\end{align}
	By \cite[Corollary 6.6]{abraham2020global}, we have
	\begin{equation*}
	\excm{1}\left[\int_\rdtree \mu(\dd x)\int_0^{H(x)}\sigma_{r,x}^\alpha \, \dd r\right] = \frac{1}{|\Gamma(-1/\gamma)|} \mathrm{B}\left(\alpha + 1-1/\gamma,1-1/\gamma\right),
	\end{equation*}
where $\mathrm{B}$ is the beta function. Using that $\mathrm{B}(x,1-1/\gamma)\sim \Gamma(1-1/\gamma)x^{-1+1/\gamma}$ as $x\to \infty$, we deduce that
\begin{equation}\label{eq: F2 bounded in L1}
\sup_{\alpha \geqslant 0 }\excm{1}\left[\alpha^{1-1/\gamma} \int_\rdtree \mu(\dd x)\int_0^{H(x)}\sigma_{r,x}^\alpha \, \dd r\right] < \infty.
\end{equation}
On the other hand, let $\theta >1$. Since the function $x \mapsto x^{1+\theta}\e^{-x}$ is bounded on $[0,\infty)$, it follows that
\begin{equation}\label{eq: F2 as convergence}
	 \frac{\beta}{\alpha^{1-1/\gamma}} \left(1-\frac{\epsilon}{\H}\right)^\beta\leqslant \frac{\beta}{\alpha^{1-1/\gamma}} \e^{-\beta \epsilon/\H}\leqslant C\frac{\H^{1+\theta}}{\beta ^\theta \epsilon^{1+\theta}\alpha^{1-1/\gamma}}
\end{equation}
for some constant $C>0$. Notice that $\beta ^\theta \epsilon^{1+\theta}\alpha^{1-1/\gamma}\to \infty$ since $\theta >1$. Thus the right-hand side of \eqref{eq: F2 as convergence} goes to $0$ almost surely. Now putting together \eqref{eq: F2}, \eqref{eq: F2 bounded in L1} and \eqref{eq: F2 as convergence}, we deduce that $\lim_{\beta \to \infty} F_\beta^2=0$ in $\excm{1}$-probability.
	
	Let $x \in \rdtree$. Recall from \eqref{eq: height is sublinear} and \eqref{eq: height is linear} that $\H_{r,x} \leqslant \H-r$ for every $r \in [0,H(x)]$ and that the equality holds for $r \in [0,H(x\wedge x^*)]$. Therefore, we get
	\begin{align*}
		|F_\beta^3|&= \beta \int_\rdtree \ind{H(x)\geqslant 2\epsilon, \, H(x\wedge x^*)<\epsilon} \, \mu(\dd x)  \int_{H(x\wedge x^*)}^\epsilon \sigma_{r,x}^\alpha \left[\left(1-\frac{r}{\H}\right)^\beta- \left(\frac{\H_{r,x}}{\H}\right)^\beta\right]\, \dd r \\
		&\leqslant \beta \int_\rdtree \ind{H(x)\geqslant 2\epsilon, \, H(x\wedge x^*)<\epsilon} \, \mu(\dd x)  \int_{H(x\wedge x^*)}^\epsilon \left(1-\frac{r}{\H}\right)^\beta\,\dd r \\
		&\leqslant \beta \int_\rdtree \ind{H(x)\geqslant 2\epsilon, \, H(x\wedge x^*)<\epsilon} \, \mu(\dd x)  \int_{H(x\wedge x^*)}^\epsilon \e^{-\beta r/\H}\,\dd r \\
		&\leqslant \H \int_\rdtree \e^{-\beta H(x\wedge x^*)/\H}\, \mu(\dd x).
	\end{align*}
Since $H(x\wedge x^*) >0$ for $\mu$-a.e. $x \in \rdtree$, a simple application of the dominated convergence theorem gives that $\excm{1}$-a.s. $\lim_{\beta\to \infty} F_\beta^3 = 0$.
	
	Furthermore, using the inequality $|\e^b-\e^{a}|\leqslant |b-a|\e^{b}$ for $a\leqslant b$ together with the fact that $j \colon y\mapsto -(y+\log(1-y))/y^2$ is increasing on $[0,1)$, we get for $r \in [0,\epsilon]$
	\begin{equation*}
		\left| \e^{-\beta r/\H} - \left(1-\frac{r}{\H}\right)^\beta\right| \leqslant \beta \left|\frac{r}{\H} + \log\left(1-\frac{r}{\H}\right)\right| \e^{-\beta r/\H} \leqslant \beta \left(\frac{r}{\H}\right)^2 \e^{-\beta r/\H}j\left(\frac{\epsilon}{\H}\right).
	\end{equation*}
	Therefore, we deduce that 
	\begin{equation*}
		|F_\beta^4| \leqslant j\left(\frac{\epsilon}{\H}\right)\int_\rdtree \ind{H(x)\geqslant 2\epsilon}\,\mu(\dd x)\int_0^\epsilon \left(\frac{\beta r}{\H}\right)^2 \e^{-\beta r/\H} \, \dd r\leqslant C j\left(\frac{\epsilon}{\H}\right)\epsilon,
	\end{equation*}
	where we used that $y \mapsto y^2 \e^{-y}$ is bounded on $[0,\infty)$ by some constant $C<\infty$ for the second inequality. Since $\lim_{y\to 0}j(y)=1/2$, we get $\excm{1}$-a.s. $\lim_{\beta \to \infty} F_\beta^4 = 0$.
	We deduce the following convergence in $\excm{1}$-probability
	\begin{equation}\label{eq: only I 2}
		\lim_{\beta \to \infty}\sum_{i=1}^4 F_\beta^{i} = 0.
	\end{equation}

	Notice that
	\begin{equation}\label{eq: I upper bound}
		E_\beta \leqslant \beta \int_\rdtree\ind{H(x)\geqslant 2\epsilon}\,\mu(\dd x) \int_0^\epsilon \e^{-\beta r/\H}\, \dd r = \H \left(1-\e^{- \beta\epsilon/\H}\right)\int_\rdtree\ind{H(x)\geqslant 2\epsilon} \,\mu(\dd x)\leqslant  \H.
	\end{equation}
	On the other hand, using that $\sigma_{r,x} \geqslant \sigma_{\epsilon,x}$ for every $x\in \rdtree$ such that $H(x)\geqslant 2\epsilon$ and every $r\in[0,\epsilon]$, we get
	\begin{equation}\label{eq: I lower bound}
		E_\beta \geqslant \H\left(1-\e^{- \beta \epsilon/\H}\right) \int_\rdtree \ind{H(x)\geqslant 2\epsilon}\sigma_{\epsilon,x}^\alpha\,\mu(\dd x).
	\end{equation}
	
	We now shall prove the following convergence in $\excm{1}$-probability
	\begin{equation}\label{eq: l1 cv}
		 \lim_{\beta \to \infty}\int_\rdtree \ind{H(x)\geqslant 2\epsilon}\sigma_{\epsilon,x}^\alpha\,\mu(\dd x) = 1. 
	\end{equation} 
	Using Lemma \ref{lemma: sigma = 1 ou > 1}-(i) and Bismut's decomposition \eqref{eq: Bismut}, we have
	\begin{align}\label{eq: l1 cv pre}
		\excm{1}\left[\int_\rdtree \ind{H(x)\geqslant 2\epsilon}\sigma_{\epsilon,x}^\alpha\,\mu(\dd x)\right]&=
		\Gamma(1-1/\gamma)\n\left[\frac{1}{\sigma}\ind{\sigma >1}\int_\rdtree \ind{\sigma^{-1+1/\gamma}H(x)\geqslant 2\epsilon}\left(\frac{\sigma_{\sigma^{1-1/\gamma}\epsilon,x}}{\sigma}\right)^\alpha\,\mu(\dd x)\right] \nonumber\\
		&=\Gamma(1-1/\gamma)\int_0^\infty \dd t \ex{\frac{1}{S_t}\ind{S_t >1, \, t \geqslant 2\epsilon S_t ^{1-1/\gamma}} \left(1-\frac{S_{\epsilon S_t^{1-1/\gamma}}}{S_t}\right)^\alpha }.
	\end{align}
Recall that $S$ is a stable subordinator with index $1-1/\gamma$. Thus we have the following identity in distribution for $c >0$
\begin{equation*}
	\left(S_{cr}, \, r\geqslant 0\right) \lawd \left(c^{\gamma/(\gamma-1)}S_r,\, r\geqslant 0\right).
\end{equation*}
Applying this, we get that
\begin{equation}\label{eq: subordinator identity}
\alpha S\left(\epsilon S^{1-1/\gamma}_t\right) \lawd S\left(\epsilon S^{1-1/\gamma}_{\alpha^{1-1/\gamma}t}\right).
\end{equation}
Now notice that
\begin{equation*}
\epsilon S_{\alpha^{1-1/\gamma}t}^{1-1/\gamma} \lawd \epsilon \alpha^{1-1/\gamma} S_t^{1-1/\gamma}.
\end{equation*}
Since $\epsilon \alpha^{1-1/\gamma} \to 0$, this clearly implies that $\epsilon S_{\alpha^{1-1/\gamma}t}^{1-1/\gamma} \to 0$ in probability. As $S$ is a.s. continuous at $0$, we deduce that $S\left(\epsilon S^{1-1/\gamma}_{\alpha^{1-1/\gamma}t}\right)\to 0$ in probability. Thus, it follows from \eqref{eq: subordinator identity} that $\alpha S\left(\epsilon S^{1-1/\gamma}_t\right) \to 0$ in probability for every $t >0$ and
\begin{equation*}
\alpha \log\left(1-\frac{S_{\epsilon S_t^{1-1/\gamma}}}{S_t}\right) \sim -\alpha \frac{S\left(\epsilon S^{1-1/\gamma}_t\right)}{S_t} \xrightarrow{\mathbb{P}} 0.
\end{equation*}
In particular, this implies the following convergence in probability for every $t>0$
\begin{equation*}
\frac{1}{S_t}\ind{S_t >1, \, t \geqslant 2\epsilon S_t ^{1-1/\gamma}} \left(1-\frac{S_{\epsilon S_t^{1-1/\gamma}}}{S_t}\right)^\alpha  \to \frac{1}{S_t}\ind{S_t >1}.
\end{equation*}
Since we have the inequality
\begin{equation*}
\frac{1}{S_t}\ind{S_t >1, \, t \geqslant 2\epsilon S_t ^{1-1/\gamma}} \left(1-\frac{S_{\epsilon S_t^{1-1/\gamma}}}{S_t}\right)^\alpha  \leqslant \frac{1}{S_t}\ind{S_t >1}
\end{equation*}
where the right-hand side is integrable with respect to $\mathbf{1}_{(0,\infty)}(t)\, \dd t \otimes \mathbb{P}$ thanks to \eqref{eq: tau is integrable}, the dominated convergence theorem yields
\begin{equation*}
\int_0^\infty \dd t \ex{\frac{1}{S_t}\ind{S_t >1, \, t \geqslant 2\epsilon S_t ^{1-1/\gamma}} \left(1-\frac{S_{\epsilon S_t^{1-1/\gamma}}}{S_t}\right)^\alpha } \to \int_0^\infty \dd t \ex{\frac{1}{S_t} \ind{S_t >1}} = \frac{1}{\Gamma(1-1/\gamma)}\cdot
\end{equation*}
Together with \eqref{eq: l1 cv pre} and the fact that
\begin{equation*}
\int_\rdtree \ind{H(x)\geqslant 2\epsilon}\sigma_{\epsilon,x}^\alpha\,\mu(\dd x) \leqslant 1,
\end{equation*}
this proves \eqref{eq: l1 cv}.

Finally, since $\beta \epsilon \to \infty$, it is clear that $\H(1-\e^{-\beta\epsilon/\H}) \to \H$ almost surely. In conjunction with \eqref{eq: l1 cv}, this gives the following convergence in $\excm{1}$-probability
\begin{equation*}
\H\left(1-\e^{- \beta \epsilon/\H}\right) \int_\rdtree \ind{H(x)\geqslant 2\epsilon}\sigma_{\epsilon,x}^\alpha\,\mu(\dd x) \to \H.
\end{equation*}
Thus, using this together \eqref{eq: I upper bound} and \eqref{eq: I lower bound} yields $\lim_{\beta \to \infty}E_\beta= \H$ in $\excm{1}$-probability. It follows from \eqref{eq: decomposition E F} and \eqref{eq: only I 2} that $\lim_{\beta\to \infty} \beta \H^{-\beta} \Z_{\alpha,\beta} = \H$ in $\excm{1}$-probability. This proves the first part of the theorem.

Next, we treat the case $\alpha^{1-1/\gamma}/\beta^\rho \to 0$ for some $\rho \in (0,1)$. The proof is similar and we only highlight the differences. Notice that there exists $p,q \in (0,1)$ and $\theta \in (0,\gamma/(\gamma-1))$ such that $(1+p)q >1$ and $q\theta >\rho\gamma/(\gamma-1)$.  Taking $\epsilon = \beta^{-q}$, it is straighforward to check that $\epsilon \to 0$, $\beta \epsilon \to \infty$, $\beta \epsilon^{1+p} \to 0$ and $ \alpha\epsilon^\theta\to 0$. As in the first part, we have that $\excm{1}$-a.s. $\lim_{\beta \to \infty} F_\beta^1 + F_\beta^3+F_\beta^4 = 0$.

Furthermore, using that $\sigma_{r,x}\leqslant 1$, it follows from \eqref{eq: F2} that
\begin{equation*}
F_\beta^2\leqslant\beta \left(1-\frac{\epsilon}{\H}\right)^\beta \H \leqslant \beta \e^{-\beta \epsilon/\H}\H = \beta \e^{-\beta^{1-q}/\H}.
\end{equation*}
This proves that $\excm{1}$-a.s. $\lim_{\beta \to \infty} F_\beta^2 = 0$.

Now we shall prove that $\excm{1}$-a.s. $\mu(\dd x)$-a.s.
\begin{equation}\label{eq: as cv}
\lim_{\beta \to \infty} \ind{H(x)\geqslant 2\epsilon}\sigma_{\epsilon,x}^\alpha = 1.
\end{equation}
Using the same computation as in \eqref{eq: l1 cv pre}, we have the following identity in distribution
\begin{multline}\label{eq: identity process epsilon}
	\left(\ind{H(x)\geqslant 2\epsilon}\sigma_{\epsilon,x}^\alpha, \, \epsilon >0\right) \quad \text{under } \excm{1} \\ \lawd \quad \left(\ind{t \geqslant 2 \epsilon S_t^{1-1/\gamma}}\left(1-\frac{S\left(\epsilon S_t^{1-1/\gamma}\right)}{S_t}\right)^\alpha,\, \epsilon >0\right) \quad \text{under } \int_0^\infty \dd t \ex{\frac{1}{S_t}\ind{S_t >1}\bullet}.
\end{multline}
Since $\theta < \gamma/(\gamma-1)$, \cite[Chapter III, Theorem 9]{bertoin1996levy} guarantees that $\mathbb{P}$-a.s. $\limsup_{r \to 0} r^{-\theta} S_r = 0$. By composition, it follows that $\mathbb{P}$-a.s. for every $t>0$, $\lim_{\epsilon \to 0} \epsilon^{-\theta} S\left(\epsilon S_t^{1-1/\gamma}\right) =0$. Thus we deduce that
\begin{equation*}
\alpha \log\left(1-\frac{S\left(\epsilon S_t^{1-1/\gamma}\right)}{S_t}\right)\sim -\alpha S\left(\epsilon S_t^{1-1/\gamma}\right) = -\alpha \epsilon^\theta \epsilon^{-\theta}S\left(\epsilon S_t^{1-1/\gamma}\right)\to 0
\end{equation*}
since $\alpha \epsilon^\theta \to 0$. This proves that the process in the right-hand side of \eqref{eq: identity process epsilon} goes to $1$ $\mathbb{P}$-a.s. as $\epsilon \to 0$, thus \eqref{eq: as cv} follows. 

Thanks to \eqref{eq: as cv}, since $\sigma_{\epsilon,x}\leqslant 1$, a simple application of the dominated convergence theorem gives that $\excm{1}$-a.s.
\begin{equation*}
	\lim_{\beta \to \infty}\int_\rdtree \ind{H(x)\geqslant 2\epsilon}\sigma_{\epsilon,x}^\alpha\,\mu(\dd x) = 1.
\end{equation*}
This, together with the estimates \eqref{eq: I upper bound} and \eqref{eq: I lower bound} yields the $\excm{1}$-a.s. convergence $\lim_{\beta \to \infty} E_\beta = \H$ which concludes the proof of the second part of the theorem.
\end{proof}


\section{Technical lemmas}\label{section: technical}
\subsection{Proof of Lemma \ref{lemma: approx point measure}}\label{section: approx point measure}
Recall that $\g(\epsilon) = 1-\f(\epsilon)$. Using the expression of $F(\epsilon)$ from \eqref{eq: expression F}, we write
\begin{multline}\label{eq: decomposition N}
	\Gamma(1-1/\gamma)^{-1}F(\epsilon) - \int_0^\infty \dd t \operatorname{\mathbb{E}}\left[\frac{1}{\mass_{\g(\epsilon)t}}\ind{\mass_{\g(\epsilon)t} >1} f\circ \norma\left(\Tdown_{\g(\epsilon)t}\right) g\left(\mass_{\g(\epsilon)t}^{-1+1/\gamma} t\right)\right.\\
	\left. \times\expp{-\sum_{s\leqslant \f(\epsilon) t} \Phi\left(\epsilon^{-1} s/t,\epsilon^{-\gamma/(\gamma-1)}\mass_{\g(\epsilon)t}^{-1}\mu(\tree_s),\norma\left(\tree_s\right)\right)}\right] =\sum_{i=1}^4 \int_0^\infty\dd t \ex{N_\epsilon^{i}(t)},
\end{multline}
where
\begin{align*}
	N_\epsilon^{1}(t)&=
	\begin{multlined}[t]
		\frac{1}{\mass_t}\ind{\mass_t >1} \left\lbrace f\circ \norma\left(\Tdown_t\right) -f\circ \norma\left(\Tdown_{\g(\epsilon)t}\right)\right\rbrace g\left(\mass_t^{-1+1/\gamma} t\right)\\
		\times\expp{-\sum_{s\leqslant \f(\epsilon) t} \Phi\left(\epsilon^{-1} s/t,\epsilon^{-\gamma/(\gamma-1)}\mass_t^{-1}\mu(\tree_s),\norma\left(\tree_s\right)\right)},
	\end{multlined}\\
	N_\epsilon^{2}(t)&=
	\begin{multlined}[t]
		\frac{1}{\mass_t}\ind{\mass_t >1} f\circ 	\norma\left(\Tdown_{\g(\epsilon)t}\right)\left\lbrace g\left(\mass_t^{-1+1/\gamma} t\right)-g\left(\mass_{\g(\epsilon)t}^{-1+1/\gamma} t\right)\right\rbrace\\
		\times\expp{-\sum_{s\leqslant \f(\epsilon) t} \Phi\left(\epsilon^{-1} s/t,\epsilon^{-\gamma/(\gamma-1)}\mass_t^{-1}\mu(\tree_s),\norma\left(\tree_s\right)\right)},
	\end{multlined}\\
	N_\epsilon^{3}(t)&=
	\begin{multlined}[t]
		\frac{1}{\mass_{t}}\ind{\mass_{t}>1} f\circ \norma\left(\Tdown_{\g(\epsilon)t}\right) g\left(\mass_{\g(\epsilon)t}^{-1+1/\gamma} t\right)\\
		\begin{multlined}[t]
			\times\left[\expp{-\sum_{s\leqslant \f(\epsilon) t} \Phi\left(\epsilon^{-1} s/t,\epsilon^{-\gamma/(\gamma-1)}\mass_t^{-1}\mu(\tree_s),\norma\left(\tree_s\right)\right)}\right.\\
			\left. -\expp{-\sum_{s\leqslant \f(\epsilon) t} \Phi\left(\epsilon^{-1} s/t,\epsilon^{-\gamma/(\gamma-1)}\mass_{\g(\epsilon)t}^{-1}\mu(\tree_s),\norma\left(\tree_s\right)\right)}\right],
		\end{multlined}
	\end{multlined}\\
	N_\epsilon^{4}(t)&=
	\begin{multlined}[t]
		\left\lbrace\frac{1}{\mass_t}\ind{\mass_t >1}-\frac{1}{\mass_{\g(\epsilon)t}}\ind{\mass_{\g(\epsilon)t}>1}\right\rbrace f\circ \norma\left(\Tdown_{\g(\epsilon)t}\right) g\left(\mass_{\g(\epsilon)t}^{-1+1/\gamma} t\right)\\
		\times\expp{-\sum_{s\leqslant \f(\epsilon) t} \Phi\left(\epsilon^{-1} s/t,\epsilon^{-\gamma/(\gamma-1)}\mass_{\g(\epsilon)t}^{-1}\mu(\tree_s),\norma\left(\tree_s\right)\right)}.
	\end{multlined}
\end{align*}

Recall from \eqref{eq: definition norma} the definition of $\norma$ and notice that since the total mass of $\Tdown_t$ is $\mass_t$, we have $\norma(\Tdown_t) = R_\gamma(\Tdown_t, \mass_t^{-
	1+1/\gamma})$. It follows that
\begin{align*}
	\left|N_\epsilon^1(t)\right|&\leqslant \norm{f}_\mathrm{L}\norm{g}_\infty \frac{1}{\mass_t}\ind{\mass_t>1} \ghp\left(\norma\left( \Tdown_t\right),\norma\left(\Tdown_{\g(\epsilon)t}\right)\right)\\
	& \leqslant \begin{multlined}[t]
		\norm{f}_\mathrm{L}\norm{g}_\infty\ind{\mass_t>1}\left[ \ghp\left(R_\gamma\left(\Tdown_t,\mass_t^{-1+1/\gamma}\right), R_\gamma\left(\Tdown_{\g(\epsilon)t},\mass_t^{-1+1/\gamma}\right)\right)\right.\\
		+\left.\ghp\left(R_\gamma\left(\Tdown_{\g(\epsilon)t} ,\mass_t^{-1+1/\gamma}\right),R_\gamma\left(\Tdown_{\g(\epsilon)t},\mass_{\g(\epsilon)t}^{-1+1/\gamma}\right)\right)\right],
	\end{multlined}
\end{align*}
where $\norm{f}_\mathrm{L}$ denotes the Lipschitz constant of $f$. Notice that, by construction, the tree $\Tdown_t$ is obtained from $\Tdown_{\g(\epsilon)t}$ by adding to the root a branch $[0,\f(\epsilon)t)$ onto which we graft $\tree_s$ at height $0\leqslant s<\f(\epsilon)t$. It is clear that the added part has mass $\sum_{s<\f(\epsilon)t}\mu(\tree_s)=S_{f(\epsilon)t-}$  and height at most $\max_{s<\f(\epsilon)t} \H(\tree_s) +\f(\epsilon)t$. Thus, by definition \eqref{eq: definition R} of the mapping $R_\gamma$, we deduce that
\begin{equation}\label{eq: ghp1}
	\ghp\left(R_\gamma\left(\Tdown_t,\mass_t^{-1+1/\gamma}\right), R_\gamma\left(\Tdown_{\g(\epsilon)t},\mass_t^{-1+1/\gamma}\right)\right)\leqslant \mass_t^{-1} S_{\f(\epsilon)t-} + \mass_t^{-1+1/\gamma}\left(\max_{s<\f(\epsilon)t}\H(\tree_s)+\f(\epsilon)t\right).
\end{equation}
Moreover, using Lemma \ref{lemma: ghp dilatation} and again the definition of $R_\gamma$, we get
\begin{multline}\label{eq: ghp2}
	\ghp\left(R_\gamma\left(\Tdown_{\g(\epsilon)t} ,\mass_t^{-1+1/\gamma}\right),R_\gamma\left(\Tdown_{\g(\epsilon)t},\mass_{\g(\epsilon)t}^{-1+1/\gamma}\right)\right)\\
	\leqslant 2\left(\mass_{\g(\epsilon)t}^{-1+1/\gamma}-\mass_t^{-1+1/\gamma}\right)\H\left(\Tdown_{\g(\epsilon)t}\right)+\left(\mass_{\g(\epsilon)t}^{-1}-\mass_t^{-1}\right)\mu\left(\Tdown_{\g(\epsilon)t}\right).
\end{multline}
From \eqref{eq: ghp1} and \eqref{eq: ghp2}, we deduce that
\begin{multline*}
	\left|N_\epsilon^1(t)\right|\leqslant \norm{f}_\mathrm{L}\norm{g}_\infty\left[S_{\f(\epsilon)t-} + \max_{s<\f(\epsilon)t}\H(\tree_s)+\f(\epsilon)t\right.\\
	\left.+2\left(\mass_{\g(\epsilon)t}^{-1+1/\gamma}-\mass_t^{-1+1/\gamma}\right)\H\left(\Tdown_{\g(\epsilon)t}\right)+\left(\mass_{\g(\epsilon)t}^{-1}-\mass_t^{-1}\right)\mu\left(\Tdown_{\g(\epsilon)t}\right)\right].
\end{multline*}
Therefore it follows that for every $t>0$ $\mathbb{P}$-a.s. 
\begin{equation}\label{eq: N1}
	\lim_{\epsilon \to 0}N_\epsilon^{1}(t) = 0.
\end{equation}

Furthermore, it is clear that
\begin{equation*}
	\left|N_\epsilon^2(t)\right|\leqslant \norm{f}_\infty \norm{g}_\mathrm{L}t \left|\mass_t^{-1+1/\gamma} - \mass_{\g(\epsilon)t}^{-1+1/\gamma}\right|.
\end{equation*}
Thus, we have for every $t>0$ $\mathbb{P}$-a.s.
\begin{equation}\label{eq: N2}
	\lim_{\epsilon \to 0}N_\epsilon^{2}(t) = 0.
\end{equation}
Since
\begin{equation*}
	\left|N_\epsilon^1(t)+N_\epsilon^2(t)\right|\leqslant 4\norm{f}_\infty \norm{g}_\infty \frac{1}{\mass_t}\ind{\mass_t >1}
\end{equation*}
where the right-hand side is integrable with respect to $\mathbf{1}_{(0,\infty)}(t)\, \dd t \otimes \operatorname{\mathbb{P}}$ thanks to \eqref{eq: tau is integrable}, it follows from \eqref{eq: N1} and \eqref{eq: N2} that
\begin{equation}\label{eq: N1-2}
	\lim_{\epsilon\to 0}\int_0^\infty \dd t \ex{N_\epsilon^1(t)+N_\epsilon^2(t)} = 0.
\end{equation}

Using the inequality $|\e^b-\e^{a}|\leqslant 1\wedge|b-a|$ for $a\leqslant b \leqslant 0$, we have
\begin{align}\label{eq: N3 pre}
	\left|N_\epsilon^3(t)\right|&\leqslant 
	\begin{multlined}[t]\norm{f}_\infty \norm{g}_\infty\frac{1}{\mass_{t}}\ind{\mass_{t} >1}\left(1\wedge\sum_{s\leqslant \f(\epsilon)t} \left|\Phi\left(\epsilon^{-1}s/t,\epsilon^{-\gamma/(\gamma-1)}\mass_t^{-1}\mu(\tree_s),\norma\left(\tree_s\right)\right)\right.\right.\\
		\left.\left.-\Phi\left(\epsilon^{-1}s/t,\epsilon^{-\gamma/(\gamma-1)}\mass_{\g(\epsilon)t}^{-1}\mu(\tree_s),\norma\left(\tree_s\right)\right)\right|\right)
	\end{multlined}\nonumber\\
	&\leqslant \norm{f}_\infty \norm{g}_\infty \frac{1}{\mass_{t}}\ind{\mass_{t} >1}\left(1\wedge C\epsilon^{-\gamma/(\gamma-1)} \left|\mass_{t}^{-1}-\mass_{\g(\epsilon)t}^{-1}\right|\sum_{s\leqslant \f(\epsilon) t}\mu(\tree_s)\right)\nonumber\\
	&= \norm{f}_\infty \norm{g}_\infty \frac{1}{\mass_{t}}\ind{\mass_{t} >1}\left(1\wedge C\epsilon^{-\gamma/(\gamma-1)} \frac{\left(\mass_t-\mass_{\g(\epsilon)t}\right)^2}{\mass_t \mass_{\g(\epsilon)t}}\right).
\end{align}
Since $\mass$ is a stable subordinator with index $1-1/\gamma$, we get that
\begin{equation*}
	\epsilon^{-\gamma/(\gamma-1)}\left(\mass_t-\mass_{\g(\epsilon)t}\right)^2 \lawd \epsilon^{-\gamma/(\gamma-1)}\mass_{\f(\epsilon)t}^2 \lawd \left(\epsilon^{-1}\f(\epsilon)^2\right)^{\gamma/(\gamma-1)} \mass_t^2 \xrightarrow[\epsilon \to 0]{(d)} 0
\end{equation*}
as $\epsilon^{-1}\f(\epsilon)^2 \to 0$. We deduce the following convergence in $\mathbb{P}$-probability
\begin{equation*}
	\lim_{\epsilon \to 0} \frac{1}{\mass_{t}}\ind{\mass_{t} >1}\left(1\wedge C\epsilon^{-\gamma/(\gamma-1)} \frac{\left(\mass_t-\mass_{\g(\epsilon)t}\right)^2}{\mass_t \mass_{\g(\epsilon)t}}\right) = 0.
\end{equation*}
Thanks to \eqref{eq: tau is integrable}, it follows from the dominated convergence theorem that
\begin{equation*}
	\lim_{\epsilon \to 0} \int_0^\infty \dd t \ex{\frac{1}{\mass_{t}}\ind{\mass_{t} >1}\left(1\wedge C\epsilon^{-\gamma/(\gamma-1)} \frac{\left(\mass_t-\mass_{\g(\epsilon)t}\right)^2}{\mass_t \mass_{\g(\epsilon)t}}\right)} = 0.
\end{equation*}
Together with \eqref{eq: N3 pre}, this gives
\begin{equation}\label{eq: N3}
	\lim_{\epsilon \to 0} \int_0^\infty \dd t\ex{N_\epsilon^3(t)} = 0.
\end{equation}

Finally, notice that
\begin{equation}\label{eq: N4 pre}
	\left|\int_0^\infty\dd t\ex{N_\epsilon^4(t)}\right|
	\leqslant \norm{f}_\infty\norm{g}_\infty \int_0^\infty \dd t\ex{\frac{1}{\mass_t} \ind{\mass_{\g(\epsilon)t} \leqslant 1 < \mass_t} + \frac{\mass_t-\mass_{\g(\epsilon)t} }{\mass_t\mass_{\g(\epsilon)t} }\ind{\mass_{\g(\epsilon)t} >1}}.
\end{equation}
Thanks to \eqref{eq: tau is integrable} and the dominated convergence theorem, it is clear that
\begin{equation}\label{eq: N4 part 1}
	\lim_{\epsilon \to 0} \int_0^\infty \dd t \ex{\frac{1}{\mass_t}\ind{\mass_{\g(\epsilon)t} \leqslant 1 < \mass_t}}=0
\end{equation}
as the process $\mass$ is a.s. continuous at $t$. On the other hand, using the inequality
\begin{equation*}
	\frac{\mass_t-\mass_{\g(\epsilon)t} }{\mass_t\mass_{\g(\epsilon)t} }\ind{\mass_{\g(\epsilon)t} >1} \leqslant \left(\frac{\mass_t-\mass_{\g(\epsilon)t} }{\mass_t}\right)^{1-q} \frac{\left(\mass_t-\mass_{\g(\epsilon)t} \right)^q}{\mass_{\g(\epsilon)t} ^{1+q}} \ind{\mass_{\g(\epsilon)t} >1} \leqslant \frac{\left(\mass_t-\mass_{\g(\epsilon)t} \right)^q}{\mass_{\g(\epsilon)t} ^{1+q}}\ind{\mass_{\g(\epsilon)t} >1}
\end{equation*}
where $q \in (0,1-1/\gamma)$, we get that
\begin{align}\label{eq: eta moment}
	\int_0^\infty \dd t\ex{\frac{\mass_t-\mass_{\g(\epsilon)t} }{\mass_t\mass_{\g(\epsilon)t} }\ind{\mass_{\g(\epsilon)t}  >1}}&\leqslant \int_0^\infty \dd t\ex{\frac{\left(\mass_t-\mass_{\g(\epsilon)t} \right)^q}{\mass_{\g(\epsilon)t} ^{1+q}}\ind{\mass_{\g(\epsilon)t} >1}} \nonumber\\
	&=  \int_0^\infty\dd t \ex{\mass_{\f(\epsilon)t}^q} \ex{\frac{1}{\mass_{\g(\epsilon)t}^{1+q}}\ind{\mass_{\g(\epsilon)t}>1}}\nonumber\\
	&= \f(\epsilon)^{q\gamma/(\gamma-1)}\g(\epsilon)^{-1-q\gamma/(\gamma-1)}\ex{\mass_1^q} \int_0^\infty \dd r\, r^{q\gamma/(\gamma-1)} \ex{\frac{1}{\mass_r^{1+q}}\ind{\mass_r>1}}\nonumber\\
	&=\f(\epsilon)^{q\gamma/(\gamma-1)}\g(\epsilon)^{-1-q\gamma/(\gamma-1)}\ex{\mass_1^q}\ex{\frac{1}{\mass_1^{1+q}}\int_{\mass_1^{-1+1/\gamma}}^\infty \dd r\, r^{-\gamma/(\gamma-1)}}\nonumber\\
	&= \f(\epsilon)^{q\gamma/(\gamma-1)}\g(\epsilon)^{-1-q\gamma/(\gamma-1)}\ex{\mass_1^q}\ex{\mass_1^{-1-q+1/\gamma }},
\end{align}
where we used that $\mass_t-\mass_{\g(\epsilon)t}$ is independent of $\mass_{\g(\epsilon)t}$ and is distributed as $\mass_{\f(\epsilon)t}$ for the first equality and that $\mass_{t} \law t^{\gamma/(\gamma-1)}\mass_1$ for the second. Thanks to \eqref{eq: tau finite moment}, we have $\ex{\mass_1^q} < \infty$ and $\ex{\mass_1^{-1+1/\gamma-q}}< \infty$. Thus, it follows from \eqref{eq: eta moment} that
\begin{equation}\label{eq: N4 part 2}
	\lim_{\alpha \to \infty} 	\int_\epsilon^\infty \ex{\frac{\mass_t-\mass_{t-\epsilon} }{\mass_t\mass_{t-\epsilon} }\ind{\mass_{t-\epsilon}  >1}}\, \dd t  = 0.
\end{equation}
Combining \eqref{eq: N4 pre}, \eqref{eq:  N4 part 1} and \eqref{eq: N4 part 2}, we deduce that
\begin{equation}\label{eq: N4}
	\lim_{\epsilon \to 0}\int_0^\infty \dd t \ex{\left|N_\epsilon^4(t)\right|} =0.
\end{equation}

It follows from \cref{eq: decomposition N,eq: N1-2,eq: N3,eq: N4} that
\begin{multline*}
	\lim_{\epsilon \to 0}\Gamma(1-1/\gamma)^{-1}F(\epsilon) - \int_0^\infty \dd t \operatorname{\mathbb{E}}\left[\frac{1}{\mass_{\g(\epsilon)t}}\ind{\mass_{\g(\epsilon)t} >1} f\circ R\left(\Tdown_{\g(\epsilon)t}, \mass_{\g(\epsilon)t}^{-1}\right) g\left(\mass_{\g(\epsilon)t}^{-1+1/\gamma} t\right)\right.\\
	\left. \times\expp{-\sum_{s\leqslant \f(\epsilon) t} \Phi\left(\epsilon^{-1} s/t,\epsilon^{-\gamma/(\gamma-1)}\mass_{\g(\epsilon)t}^{-1}\mu(\tree_s),R\left(\tree_s, \mu(\tree_s)^{-1}\right)\right)}\right] = 0.
\end{multline*}

\subsection{Proof of Lemma \ref{lemma: convergence in probability J}}\label{section: remainder}
Recall from \eqref{eq: definition I} the definition of $I_\alpha$. Write $\alpha^{1-1/\gamma}\H^{-\beta} Z_{\alpha,\beta}(U) - I_\alpha = \sum_{i=1}^4 J_\alpha^{i}$ where
	\begin{align*}
		J^1_\alpha &= \alpha^{1-1/\gamma}\H^{-\beta} \int_{\epsilon H(U)}^{H(U)}\sigma_{r,U}^\alpha \H_{r,U}^\beta\, \dd r, \\
		J^2_\alpha &= \alpha^{1-1/\gamma} \H^{-\beta} \int_0^{\epsilon H(U)} \sigma_{r,U}^\alpha \left\{\left(\frac{\H_{r,U}}{\H}\right)^\beta-\left(1-\frac{r}{\H}\right)^\beta\right\}\, \dd r,\\
		J^3_\alpha &= \alpha^{1-1/\gamma}  \int_0^{\epsilon H(U)} \sigma_{r,U}^\alpha \left\{\left(1-\frac{r}{\H}\right)^\beta-\e^{-\beta r/\H}\right\}\, \dd r,\\
		J^4_\alpha &= \alpha^{1-1/\gamma}  \int_0^{\epsilon H(U)} \left\{\sigma_{r,U}^\alpha-\e^{-\alpha\left(1-\sigma_{r,U}\right)}\right\} \e^{-\beta r/\H}\, \dd r.
	\end{align*}
	We shall prove that for every $1\leqslant i \leqslant 4$, $\lim_{\alpha \to \infty} J_\alpha^{i} = 0$ in $\excm{1}$-probability.
	
	We start by showing that $\excm{1}$-a.s. $\mu(\dd x)$-a.s.
	\begin{equation}\label{eq: simple desintigration argument 0}
			\lim_{\alpha \to \infty} \alpha^{1-1/\gamma} \int_{\epsilon H(x)}^{H(x)} \sigma_{r,x}^\alpha \, \dd r =0.
	\end{equation}
	Recall from \eqref{eq: definition S} the definition of $S$. Using Lemma \ref{lemma: sigma = 1 ou > 1}-(i) and Bismut's decomposition \eqref{eq: Bismut}, we have
	\begin{multline}\label{eq: identity in distribution subordinator}
		\Gamma(1-1/\gamma)^{-1}\excm{1}\left[ \mu\left(x\in \rdtree \colon\,\limsup_{\alpha \to \infty}\alpha^{1-1/\gamma}\int_{\epsilon H(x)}^{H(x)} \sigma_{r,x}^\alpha\, \dd r>0\right)\right]\\
		\begin{aligned}[b]
			&= \n\left[\frac{1}{\sigma}\ind{\sigma>1} \mu\left(x\in \rdtree\colon\, \limsup_{\alpha \to \infty}\left(\frac{\alpha}{\sigma}\right)^{1-1/\gamma}\int_{\epsilon H(x)}^{H(x)} \left(\frac{\sigma_{r,x}}{\sigma}\right)^\alpha\, \dd r>0\right)\right]\\
			&= \int_0^\infty \dd t \ex{\frac{1}{\mass_t}\ind{\mass_t>1}; \limsup_{\alpha \to \infty} \left(\frac{\alpha}{\mass_t}\right)^{1-1/\gamma} \int_{\epsilon t}^t \left(1-\frac{S_r}{\mass_t}\right)^\alpha \, \dd r>0}.
			\end{aligned}
	\end{multline}
Let $t >0$. It is clear that
\begin{equation}\label{eq: inequality subordinator}
 \int_{\epsilon t}^t \left(1-\frac{S_r}{\mass_t}\right)^\alpha \, \dd r\leqslant \int_{\epsilon t}^t \e^{-\alpha S_r/\mass_t}\, \dd r 
	\leqslant t\e^{-\alpha S_{\epsilon t}/\mass_t}.
\end{equation}
 According to \cite[Chapter III, Theorem 11]{bertoin1996levy}, we have that $\mathbb{P}$-a.s.
	\begin{equation*}
		\liminf_{\epsilon\to 0} \frac{S_{\epsilon t}}{h(\epsilon t)} = \gamma - 1>0,
	\end{equation*}
	where $h(r) = r^{\gamma/(\gamma-1)}\log\left(\left|\log r \right|\right)^{-1/(\gamma-1)}$. As a consequence, there exist a positive random variable $\rho = \rho(\omega)$ and a constant $c>0$ such that $\mathbb{P}$-a.s. $S_{\epsilon t} \geqslant ch(\epsilon t)$ for every $\epsilon \in (0,\rho)$. We deduce that for every $t>0$, $\mathbb{P}$-a.s.
	\begin{align*}
		\limsup_{\alpha \to \infty}\alpha^{1-1/\gamma} \e^{-\alpha S_{\epsilon t}/\mass_t} &\leqslant\limsup_{\alpha \to \infty}\alpha^{1-1/\gamma} \e^{-c\alpha h(\epsilon t)/\mass_t}\\
		&= \limsup_{\alpha \to \infty}\alpha^{1-1/\gamma} \e^{-c t^{\gamma/(\gamma-1)}\alpha^\delta \log(|\log\left(\epsilon t\right)|)^{-1}/\mass_t} = 0,
	\end{align*}
where in the last equality we used \eqref{eq: definition epsilon}. In conjunction with \eqref{eq: identity in distribution subordinator} and \eqref{eq: inequality subordinator}, this yields \eqref{eq: simple desintigration argument 0}.

Let $\eta >0$. Using that $\H_{r,U} \leqslant \H$, we have
\begin{align*}
	\limsup_{\alpha \to \infty}\excm{1}\left[J_\alpha^1>\eta\right] &\leqslant \limsup_{\alpha \to \infty}\excm{1}\left[\alpha^{1-1/\gamma}\int_{\epsilon H(U)}^{H(U)} \sigma_{r,U}^\alpha \, \dd r>\eta\right]\\
	&= \limsup_{\alpha \to \infty}\excm{1}\left[\mu\left(x \in \rdtree \colon \, \alpha^{1-1/\gamma}\int_{\epsilon H(x)}^{H(x)} \sigma_{r,x}^\alpha \, \dd r >\eta\right)\right],
\end{align*}
where the last term vanishes thanks to \eqref{eq: simple desintigration argument 0} and the dominated convergence theorem. This gives that $\lim_{\alpha \to \infty} J_\alpha^1 = 0$ in $\excm{1}$-probability.

	Under $\excm{1}$, let $x^*$ be the unique leaf realizing the total height, that is the unique $x \in \rdtree$ such that  $H(x) = \H$. Then $\excm{1}$-a.s. we have $H(U\wedge x^*) >0$ and, thanks to  \eqref{eq: height is linear}, $\H_{r,U}= \H-r$ for every $r \in [0,\epsilon H(U)]$ if $\epsilon >0$ is small enough (more precisely for $\epsilon \leqslant H(U\wedge x^*)/H(U)$). In particular, this implies that $\excm{1}$-a.s. $\lim_{\alpha \to \infty}J_\alpha^2 =0$.
	
	Next, we have
	\begin{align*}
		|J_\alpha^{3}|
		&\leqslant \alpha^{1-1/\gamma} \int_0^{\epsilon H(U)} \sigma_{r,U}^\alpha \left|\left(1-\frac{r}{\H}\right)^\beta-\e^{-\beta r/\H}\right|\, \dd r \\
		&\leqslant\alpha^{1-1/\gamma} \beta \int_0^{\epsilon H(U)} \sigma_{r,U}^\alpha\left|\log\left(1-\frac{r}{\H}\right)+\frac{r}{\H}\right| \e^{-\beta r/\H} \, \dd r \\
		&\leqslant  \alpha^{1-1/\gamma}\beta j\left(\frac{\epsilon H(U)}{\H}\right)  \int_0^{\epsilon H(U)} \sigma_{r,U}^\alpha\frac{r^2}{\H^2}\e^{-\beta r/\H}\, \dd r\\
		&\leqslant C H(U) j\left(\epsilon\right)\epsilon^3\alpha^{2(1-1/\gamma)},
	\end{align*} 
	where we used that $|\e^b - \e^{a}| \leqslant |b-a| \e^b$ for $a\leqslant b$ for the second inequality, that the function $j \colon y \mapsto -(y+\log(1-y))/y^2$ is increasing on $[0,1)$ for the third and the fact that $H(U)\leqslant \H$ and $\beta/\alpha^{1-1/\gamma}$ is bounded by some constant $C>0$ for the last. Using \eqref{eq: definition epsilon}, notice that $\epsilon^3 \alpha^{2(1-1/\gamma)}=\alpha^{(3\delta-1)(1-1/\gamma)} \to 0$ as $\delta < 1/3$. Since $\lim_{y\to 0} j(y) =1/2$, we deduce that $\excm{1}$-a.s. $\lim_{\alpha \to \infty}J_\alpha^3 = 0$.
	
	Finally, we have
	\begin{align*}
		|J_\alpha^4| &\leqslant \alpha^{2-1/\gamma} \int_0^{\epsilon H(U)} \left|\log\left(\sigma_{r,U}\right)+ 1-\sigma_{r,U}\right|\e^{-\alpha\left(1-\sigma_{r,U}\right)}\, \dd r\\
		&\leqslant j\left(1-\sigma_{\epsilon H(U)}\right)\alpha^{2-1/\gamma}  \int_0^{\epsilon H(U)} \left(1-\sigma_{r,U}\right)^2 \e^{-\alpha\left(1-\sigma_{r,U}\right)}\, \dd r \\
		&\leqslant CH(U) j\left(1-\sigma_{\epsilon H(U)}\right)\alpha^{-1/\gamma} \epsilon,
	\end{align*}
	where we used that $|\e^b-\e^{a}| \leqslant |b-a|\e^{b}$ for $a\leqslant b$ for the first inequality, that the function $j \colon x \mapsto -(x+\log(1-x))/x^2$ is increasing on $[0,1)$  for the second and that the function $x\mapsto x^2 \e^{-x}$ is bounded on $[0,\infty)$ for the last. Since $\lim_{x \to 0} j(x) = 1/2$, $\lim_{\epsilon \to 0}\sigma_{\epsilon H(U)} =1$ and $\alpha^{-1/\gamma} \epsilon \to 0$, we deduce that $\excm{1}$-a.s. $\lim_{\alpha \to \infty}J_\alpha^{4} = 0$.

\subsection{Proof of Lemma \ref{lemma: convergence in proba}}\label{section: convergence in proba}
It is enough to show that for every Lipschitz-continuous and bounded function $f \colon [0,\infty) \to \real$
\begin{equation*}
	\lim_{\alpha \to \infty} \excm{1}\left[\int_\rdtree \mu(\dd x) f\left(\alpha^{1-1/\gamma} \H^{-\beta}\left(Z_{\alpha,\beta}(x) - \Z_{\alpha,\beta}\right)\right)\right] = f(0).
\end{equation*}

Let $\epsilon = \alpha^{(\delta-1)(1-1/\gamma)}$ with $\delta \in (0,1/2)$. For every $x\in \rdtree$ such that $H(x)\geqslant \epsilon$, set
\begin{equation*}
	Z_{\alpha,\beta}^\epsilon(x)= \int_0^{\epsilon} \sigma_{r,x}^\alpha \H_{r,x}^\beta \, \dd r \quad \text{and} \quad \Z_{\alpha,\beta}^\epsilon = \int_\rdtree \ind{H(x)\geqslant \epsilon} Z_{\alpha,\beta}^\epsilon(x)\, \mu(\dd x).
\end{equation*}
Let $x^*\in \rdtree$ be the unique leaf realizing the height, that is $H(x^*) = \H$. Using that $\H\geqslant H(x\wedge x^*) $ and that $Z_{\alpha,\beta}^\epsilon(x) = Z_{\alpha,\beta}^\epsilon(x^*)$ if $\epsilon \leqslant H(x\wedge x^*)$, write
\begin{equation*}
\int_\rdtree \mu(\dd x) f\left(\alpha^{1-1/\gamma} \H^{-\beta}\left(Z_{\alpha,\beta}(x) - \Z_{\alpha,\beta}\right)\right) = \sum_{i=1}^{4} A_\alpha^{i}+B_\alpha,
\end{equation*}
where
\begin{align*}
	A_\alpha^{1} &= \int_\rdtree \mu(\dd x)\ind{H(x\wedge x^*)<\epsilon} f\left(\alpha^{1-1/\gamma} \H^{-\beta}\left(Z_{\alpha,\beta}(x) - \Z_{\alpha,\beta}\right)\right),\\
	A_\alpha^2 &= \int_\rdtree \mu(\dd x)\ind{H(x\wedge x^*)\geqslant \epsilon} \left\{f\left(\alpha^{1-1/\gamma} \H^{-\beta}\left(Z_{\alpha,\beta}(x) - \Z_{\alpha,\beta}\right)\right)-f\left(\alpha^{1-1/\gamma} \H^{-\beta}\left(Z^\epsilon_{\alpha,\beta}(x) - \Z_{\alpha,\beta}\right)\right)\right\},\\
	A_\alpha^3 &= \int_\rdtree \mu(\dd x)\ind{H(x\wedge x^*)\geqslant \epsilon} \left\{f\left(\alpha^{1-1/\gamma} \H^{-\beta}\left(Z_{\alpha,\beta}^\epsilon(x) - \Z_{\alpha,\beta}\right)\right)-f\left(\alpha^{1-1/\gamma} \H^{-\beta}\left(Z^\epsilon_{\alpha,\beta}(x) - \Z_{\alpha,\beta}^\epsilon\right)\right)\right\},\\
	A_\alpha^4 &=- \mu\left(\left\{x\in \rdtree \colon \, H(x\wedge x^*) <\epsilon\right\}\right)f\left(\ind{\H\geqslant \epsilon}\alpha^{1-1/\gamma} \H^{-\beta}\left(Z^\epsilon_{\alpha,\beta}(x^*) - \Z_{\alpha,\beta}^\epsilon\right)\right),\\
	B_\alpha &= f\left(\ind{\H\geqslant \epsilon}\alpha^{1-1/\gamma} \H^{-\beta}\left(Z^\epsilon_{\alpha,\beta}(x^*) - \Z^\epsilon_{\alpha,\beta}\right)\right).
\end{align*}

Thanks to the dominated convergence theorem, we have
\begin{equation}\label{eq: A1}
	\lim_{\alpha \to \infty}\excm{1}[|A_\alpha^1+A_\alpha^4|]\leqslant 2\norm{f}_\infty \lim_{\alpha \to \infty}\excm{1}\left[\int_\rdtree \mu(\dd x)\ind{H(x\wedge x^*)<\epsilon}\right]=0.
\end{equation}

Next, notice that
\begin{align}\label{eq: A2 pre}
	\excm{1}[|A_\alpha^2|]&\leqslant \norm{f}_{\mathrm{L}}\excm{1}\left[\alpha^{1-1/\gamma}\H^{-\beta}\int_\rdtree \mu(\dd x) \ind{H(x\wedge x^*)\geqslant \epsilon)} \left(Z_{\alpha,\beta}(x)-Z_{\alpha,\beta}^\epsilon(x)\right)\right]\nonumber\\
	&\leqslant \norm{f}_{\mathrm{L}} \excm{1}\left[\alpha^{1-1/\gamma}\int_\rdtree \mu(\dd x) \ind{H(x)\geqslant \epsilon} \int_\epsilon^{H(x)}\sigma_{r,x}^\alpha\, \dd r\right],
\end{align}
where we used that $H(x\wedge x^*)\leqslant H(x)$ and $\H_{r,x}\leqslant \H$ for the second inequality. Now similarly to \eqref{eq: simple desintigration argument 0}, we have $\excm{1}$-a.s. $\mu(\dd x)$-a.s.
\begin{equation}\label{eq: simple disintegration argument}
	\lim_{\alpha \to \infty} \alpha^{1-1/\gamma}\ind{H(x)\geqslant \epsilon} \int_\epsilon^{H(x)} \sigma_{r,x}^\alpha \, \dd r = 0.
\end{equation}
Furthermore, applying Corollary \ref{lemma: second moment}, we have
\begin{multline*}
	\sup_{\alpha \geqslant 0}\alpha^{2-2/\gamma}\excm{1}\left[\int_\rdtree \mu(\dd x)\left(\ind{H(x)\geqslant \epsilon}\int_\epsilon^{H(x)} \sigma_{r,x}^\alpha \, \dd r\right)^2\right]\\
	\leqslant \sup_{\alpha \geqslant 0} \alpha^{2-2/\gamma}\excm{1}\left[\int_\rdtree \mu(\dd x) \left(\int_0^{H(x)}\sigma_{r,x}^\alpha\, \dd r\right)^2 \right] < \infty.
\end{multline*}
We deduce that the family
\begin{equation*}
	\left(\alpha^{1-1/\gamma}\ind{H(x)\geqslant \epsilon}\int_\epsilon^{H(x)} \sigma_{r,x}^\alpha \, \dd r, \, \alpha \geqslant 0\right)
\end{equation*}
is uniformly integrable under the measure $\excm{1}[\dd \rdtree] \mu(\dd x)$. In conjunction with \eqref{eq: simple disintegration argument}, this gives
\begin{equation}\label{eq: L1 convergence}
	\lim_{\alpha \to \infty}\excm{1}\left[\alpha^{1-1/\gamma} \int_\rdtree \ind{H(x)\geqslant \epsilon}\,\mu(\dd x)\int_\epsilon^{H(x)} \sigma_{r,x}^\alpha\, \dd r\right]=0,
\end{equation}
which, thanks to \eqref{eq: A2 pre}, implies that
\begin{equation}\label{eq: A2}
	\lim_{\alpha \to \infty}\excm{1}[|A_\alpha^2|]=0.
\end{equation}

We have
\begin{align}\label{eq: A3 pre}
	\excm{1}[|A_\alpha^3|]&\leqslant \norm{f}_{\mathrm{L}} \excm{1}\left[\alpha^{1-1/\gamma}\H^{-\beta}\int_\rdtree \mu(\dd x)\ind{H(x\wedge x^*)\geqslant \epsilon} \left(\Z_{\alpha,\beta}-\Z_{\alpha,\beta}^\epsilon\right)\right]\nonumber\\
	&\leqslant \norm{f}_\mathrm{L} \excm{1}\left[\alpha^{1-1/\gamma}\H^{-\beta} \left(\Z_{\alpha,\beta}-\Z_{\alpha,\beta}^\epsilon\right)\right]\nonumber\\
	& \begin{multlined}[b]
		 \leqslant\norm{f}_{\mathrm{L}}\excm{1}\left[\alpha^{1-1/\gamma} \int_\rdtree \ind{H(x)\geqslant \epsilon}\, \mu(\dd x)\int_\epsilon^{H(x)}\sigma_{r,x}^\alpha\, \dd r\right]\\
		 \quad+\norm{f}_{\mathrm{L}}\excm{1}\left[\alpha^{1-1/\gamma}\int_\rdtree \ind{H(x)<\epsilon} \, \mu(\dd x)\int_0^{H(x)}\sigma_{r,x}^\alpha\, \dd r\right],
		\end{multlined}
\end{align}
where we used that $\H_{r,x}\leqslant \H$ for the last inequality. Let $p\in (1,2)$ and notice that $\epsilon^{1+p}\alpha^{1-1/\gamma} \to 0$. Using that $\sigma_{r,x}\leqslant 1$ together with the Markov inequality, we get
\begin{align*}
	\excm{1}\left[\alpha^{1-1/\gamma}\int_\rdtree \ind{H(x)<\epsilon} \, \mu(\dd x)\int_0^{H(x)}\sigma_{r,x}^\alpha\, \dd r\right] &\leqslant \excm{1}\left[\epsilon\alpha^{1-1/\gamma}\int_\rdtree \ind{H(x)<\epsilon} \, \mu(\dd x)\right]\\
	&\leqslant \epsilon^{1+p} \alpha^{1-1/\gamma} \excm{1}\left[\int_\rdtree H(x)^{-p}\, \mu(\dd x)\right].
\end{align*}
By Lemma \ref{lemma: height integrable}, the last term is finite. This, in conjunction with \eqref{eq: L1 convergence} and \eqref{eq: A3 pre}, implies that
\begin{equation}\label{eq: A3}
	\lim_{\alpha \to \infty}\excm{1}[|A_\alpha^3|]=0.
\end{equation}

It remains to show that $\lim_{\alpha \to \infty}\excm{1}[B_\alpha] = f(0)$, which is equivalent to the following convergence in $\excm{1}$-probability
\begin{equation}\label{eq: uniform leaf convergence in proba}
	\lim_{\alpha \to \infty}\ind{\H\geqslant \epsilon}\alpha^{1-1/\gamma} \H^{-\beta} \left(Z_{\alpha,\beta}^\epsilon(x^*) - \Z_{\alpha,\beta}^\epsilon\right) =0.
\end{equation}
Again using that $Z_{\alpha,\beta}^\epsilon(x) = Z_{\alpha,\beta}^\epsilon(x^*)$ if $\epsilon\leqslant H(x\wedge x^*)$, we write
\begin{equation*}
	\ind{\H\geqslant \epsilon}\alpha^{1-1/\gamma} \H^{-\beta} \left(Z_{\alpha,\beta}^\epsilon(x^*) - \Z_{\alpha,\beta}^\epsilon\right) = B_\alpha^1 +B_\alpha^2,
\end{equation*}
where
\begin{align*}
	B_\alpha^1 &=  \alpha^{1-1/\gamma}\H^{-\beta}\left(\ind{\H\geqslant\epsilon}Z_{\alpha,\beta}^\epsilon(x^*) -\int_\rdtree \mu(\dd x) \ind{H(x\wedge x^*)\geqslant \epsilon}Z_{\alpha,\beta}^\epsilon(x^*)\right),\\
	B_\alpha^2 &= \alpha^{1-1/\gamma} \H^{-\beta}\left(\int_\rdtree \mu(\dd x) \ind{H(x\wedge x^*)\geqslant \epsilon}Z_{\alpha,\beta}^\epsilon(x)-\ind{\H\geqslant \epsilon} \Z_{\alpha,\beta}^\epsilon\right).
\end{align*}

Recall that $\epsilon = \alpha^{(\delta-1)(1-1/\gamma)}\to 0$ as $\alpha \to \infty$. Fix $\eta >0$ and let $\alpha_0>0$ be large enough so that for every $\alpha \geqslant \alpha_0$
\begin{equation*}
	\excm{1} \left[\int_\rdtree \mu(\dd x)\ind{H(x\wedge x^*)<\epsilon}\right] \leqslant \eta.
\end{equation*}
Then we have for every $\alpha \geqslant \alpha_0$ and $C>0$
\begin{multline}\label{eq: tight 1}
	\excm{1}\left[\alpha^{1-1/\gamma}\H^{-\beta}Z_{\alpha,\beta}^\epsilon(x^*)\ind{\H\geqslant \epsilon}\geqslant C\right] \\
	\begin{aligned}[b]
		&\leqslant \excm{1}\left[\int_\rdtree \mu(\dd x) \ind{\alpha^{1-1/\gamma}\H^{-\beta}Z_{\alpha,\beta}^\epsilon(x)\geqslant C, \, H(x\wedge x^*)\geqslant \epsilon}\right] + \excm{1}\left[\int_\rdtree \mu(\dd x)\ind{H(x\wedge x^*)< \epsilon}\right] \\
		&\leqslant \frac{\alpha^{2-2/\gamma}}{C^2}\excm{1}\left[\int_\rdtree \mu(\dd x)\ind{H(x\wedge x^*) \geqslant \epsilon}\left(\H^{-\beta}Z_{\alpha,\beta}^\epsilon(x)\right)^2\right] + \eta\\
		&\leqslant \frac{\alpha^{2-2/\gamma}}{C^2}\excm{1}\left[\int_\rdtree \mu(\dd x)\left(\int_0^{H(x)}\sigma_{r,x}^\alpha \, \dd r\right)^2\right] + \eta \\
		&\leqslant \frac{M}{C^2}+ \eta
	\end{aligned}
\end{multline}
for some constant $M>0$, where we used that $Z_{\alpha,\beta}^\epsilon(x^*) = Z_{\alpha,\beta}^\epsilon(x)$ for every $x\in \rdtree$ such that $H(x\wedge x^*)\geqslant \epsilon$ for the first inequality, the Markov inequality for the second and Corollary \ref{lemma: second moment} for the last. Thus, we get that the family $\left(\ind{\H\geqslant \epsilon}\alpha^{1-1/\gamma}\H^{-\beta}Z_{\alpha,\beta}^\epsilon(x^*), \, \alpha\geqslant \alpha_0,\,\beta \geqslant 0\right)$ is tight. Since $\excm{1}$-a.s. 
\begin{equation*}\lim_{\alpha \to \infty} \int_\rdtree \mu(\dd x)\ind{H(x\wedge x^*)< \epsilon}= 0,
\end{equation*}
we deduce the following convergence in $\excm{1}$-probability
\begin{equation*}
	\lim_{\alpha \to \infty}B_\alpha^1 =\lim_{\alpha \to \infty}\ind{\H\geqslant \epsilon}\alpha^{1-1/\gamma}\H^{-\beta}Z_{\alpha,\beta}^\epsilon(x^*)\int_\rdtree \mu(\dd x)\ind{H(x\wedge x^*)<\epsilon}=0.
\end{equation*}

Furthermore, we have
\begin{align*}
	\excm{1}[|B_\alpha^2|] &= \alpha^{1-1/\gamma} \excm{1}\left[\int_\rdtree \mu(\dd x)\ind{H(x)\geqslant \epsilon, \, H(x\wedge x^*)<\epsilon}\H^{-\beta}Z_{\alpha,\beta}^\epsilon(x)\right]\\
	&\leqslant \alpha^{1-1/\gamma} \excm{1}\left[\int_\rdtree \mu(\dd x) \left(\ind{H(x\wedge x^*)<\epsilon}\int_0^{H(x)}\sigma_{r,x}^\alpha\, \dd r\right)\right]\\
	&\leqslant \alpha^{1-1/\gamma}  \excm{1}\left[\int_\rdtree\mu(\dd x)\left(\int_0^{H(x)}\sigma_{r,x}^\alpha \, \dd r\right)^2\right]^{1/2}\excm{1}\left[\int_\rdtree \mu(\dd x)\ind{H(x \wedge x^*)<\epsilon}\right]^{1/2}\\
	&\leqslant C\excm{1}\left[\int_\rdtree \mu(\dd x)\ind{H(x \wedge x^*)<\epsilon}\right]^{1/2}
\end{align*}
for some constant $C>0$, where we used the Cauchy-Schwarz inequality for the second inequality and Corollary \ref{lemma: second moment} for the last. It follows from the dominated convergence theorem that $\lim_{\alpha \to \infty} \excm{1}[|B_\alpha^2|]=0$. This finishes the proof of \eqref{eq: uniform leaf convergence in proba}.

\medskip	
\bibliographystyle{amsplain}
\bibliography{costfunctionals}

\end{document}